\def\bibtex@style{amsrn-mod}
\renewcommand{\MR}[1]{} 
\renewcommand{\PrintDOI}[1]{}
\NewDocumentCommand{\tens}{e{_^}}{%
  \mathbin{\mathop{\otimes}\displaylimits
    \IfValueT{#1}{_{#1}}
    \IfValueT{#2}{^{#2}}
  }%
}
\newtheorem{Theorem}{Theorem}[section]
\newtheorem{corollary}[Theorem]{Corollary} 
\newtheorem{lemma}[Theorem]{Lemma}
\newtheorem*{torsionconj}{Torsion concentration conjecture}
\newtheorem*{kunneth}{K\"unneth Formula}
\newtheorem*{singerconjecture}{Singer conjecture}
\theoremstyle{definition} 
\newtheorem{definition}[Theorem]{Definition} 
\newtheorem{remark}[Theorem]{Remark}
\newtheorem*{notation}{Notation}
\newcommand{\onto}{\twoheadrightarrow}
\newcommand{\Dg}{D^{g}} 
\newcommand{\phig}{\phi^{g}}
\newcommand{\nsgp}{\mathrel{\lhd}} 
\newcommand{\cD}{\mathcal {D}} 
\newcommand{\cu}{\mathcal {U}} 
\newcommand{\F}{{\mathbb F}} 
\newcommand{\zz}{{\mathbb Z}} 
\newcommand{\Q}{{\mathbb Q}} 
\newcommand{\Fp}{{\mathbb F_p}} 
\newcommand{\rr}{{\mathbb R}} 
\newcommand{\DFH}{\cD_{\F H}}
\newcommand{\DFG}{\cD_{\F G}}
\newcommand{\DQ}{\cD_{\F Q}}
\newcommand{\DQG}{\cD_{\Q G}}
\newcommand{\DCL}{\cD_{\F C_{L}}}
\DeclareMathOperator{\Tor}{Tor}
\newcommand{\ga}{\alpha} 
\newcommand{\gb}{\beta} 
\newcommand{\gd}{\delta}
\newcommand{\gr}{\rho} 
\newcommand{\gs}{\sigma} 
\newcommand{\gt}{\tau}
\newcommand{\gD}{\Delta}
\newcommand{\gS}{\Sigma} 
\DeclareMathOperator{\Lk}{Lk} 
\DeclareMathOperator{\St}{St}
\DeclareMathOperator{\oSt}{\mathring{St}} 
\DeclareMathOperator{\Sub}{Sub} 
\newcommand{\medcup}{\textstyle \bigcup\limits}
\newcommand{\medcap}{\textstyle \bigcap\limits}
\newenvironment{enumerate1}{ 
\begin{enumerate}[\upshape (1)]}	
	{ 
\end{enumerate}
} 
\newenvironment{enumeratei}{ 
\begin{enumerate}[\upshape (i)]}	
	{ 
\end{enumerate}
} 
\def\Ddots{\mathinner{\mkern1mu\raise\p@
\vbox{\kern7\p@\hbox{.}}\mkern2mu
\raise4\p@\hbox{.}\mkern2mu\raise7\p@\hbox{.}\mkern1mu}}
\renewcommand{\d}{\partial} 
\DeclarePairedDelimiter\abs{\lvert}{\rvert}
\newcommand{\into}{\hookrightarrow}
\DeclareMathOperator{\logtor}{logtor}
\numberwithin{equation}{section} 
\title{Edge subdivisions and the $L^2$-homology of right-angled Coxeter groups}
\begin{document}\author{Grigori Avramidi\thanks{The first author would like to thank the Max Planck Institut f\"ur Mathematik for its hospitality and financial support.}
\and Boris Okun\thanks{This work was initiated while the second author was visiting the Max Planck Institut f\"ur Mathematik in Bonn.
The second author would like to thank the MPIM for the financial support.}
\and Kevin Schreve\thanks{The third author was partially supported by the NSF grant DMS-2203325.} } \maketitle
\begin{abstract}
    If $L$ is a flag triangulation of $S^{n-1}$, then the Davis complex $\Sigma_L$ for the associated right-angled Coxeter group $W_L$ is a contractible $n$-manifold.
    A special case of a conjecture of Singer predicts that the $L^2$-homology of such $\Sigma_L$ vanishes outside the middle dimension.
    We give conditions which guarantee this vanishing is preserved under edge subdivision of $L$.
    In particular, we verify Singer's conjecture when $L$ is the barycentric subdivision of the boundary of an $n$-simplex, and for general barycentric subdivisions of triangulations of $S^{2n-1}$.
    Using this, we construct explicit counterexamples to a torsion growth analogue of Singer's conjecture.
\end{abstract}

\section{Introduction}

Suppose $Y$ is a CW-complex and $G$ a group which acts properly, cellularly, and cocompactly on $Y$.
One can define the reduced $L^2$-homology groups $L^2H_*(Y)$, which via a combinatorial Hodge decomposition can be identified with the Hilbert spaces of square summable harmonic chains of $Y$.
Using the group action, one can define associated dimensions for these homology groups, the $L^2$-Betti numbers $b_i^{(2)}(Y; G)$.

This paper is motivated by the following conjecture of Singer:
\begin{singerconjecture}
    Let $M^n$ be a closed aspherical manifold and $\widetilde M^n$ its universal cover.
    Then
    \[
    b_{\neq n/2}^{(2)}(\widetilde M^n ; \pi_1(M^n)) = 0.
    \]
\end{singerconjecture}

In \cite{do01}, Davis and the second author developed a program for proving the Singer conjecture for certain manifolds constructed using right-angled Coxeter groups.
In particular, if $L$ is any flag triangulation of $S^{n-1}$, then a construction of Davis outputs a closed, locally CAT(0) $n$-manifold $P_L$, whose fundamental group is the commutator subgroup of the associated right-angled Coxeter group $W_L$.
The universal cover of $P_L$ is the Davis complex $\Sigma_L$.
For these manifolds $P_L$, they proved the Singer conjecture in dimension $4$.

For triangulations of $S^{\ge 4}$, very little was known, though Davis and the second author showed the Singer conjecture for barycentric subdivisions of triangulations of $S^5$ and $S^7$ \cite{do04}.
In this paper, we extend this result to all even dimensions.
This was claimed by the second author in \cite{o04}, but there is a gap in the argument.
The crucial computation is a resolution of the Singer conjecture for the manifold $P_{b\d \Delta^n}$ where $b\d \Delta^n$ is the barycentric subdivision of the boundary of an $n$-simplex.
\begin{Theorem}\label{t:main}
    The Singer conjecture holds for $P_L$ if
    \begin{enumerate1}
        \item $L = b\d \Delta^n$ for all $n$.\label{t:mainitemone}

        \item $L$ is the barycentric subdivision of a triangulation of $S^{2n-1}$.\label{t:mainitemtwo}
    \end{enumerate1}
\end{Theorem}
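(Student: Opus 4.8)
\emph{Setup and reductions.} For a flag triangulation $L$ of $S^{n-1}$ the Davis complex $\Sigma_L$ is a contractible $n$-manifold on which $W_L$ acts properly and cocompactly, so $L^2$-Poincar\'e duality gives $b^{(2)}_i(\Sigma_L;W_L)=b^{(2)}_{n-i}(\Sigma_L;W_L)$; hence to prove the Singer conjecture for $P_L$ it suffices to prove $b^{(2)}_i(\Sigma_L;W_L)=0$ for $i<n/2$ (equivalently, for $i>n/2$). The plan is to feed this into our edge subdivision criterion, which, given suitable acyclicity of the link of the subdivided edge, propagates the Singer conjecture from $L$ to an edge subdivision $L'$. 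There are two combinatorial inputs. First, $b\d\Delta^n$, and more generally $bS$, should be reached by a sequence of edge subdivisions starting from a flag triangulation whose Davis complex is manifestly $L^2$-acyclic — the natural candidate being the suspension tower $(S^0)^{*n}$, whose Davis complex is $\rr^n$. Second, the links that appear along the way, governed by the identity $\Lk(\hat\sigma,bK)=b(\d\sigma)*b\Lk_K(\sigma)$ and its edge analogue, are joins of complexes of the same combinatorial type but of strictly smaller dimension; combined with $\Sigma_{K_1*K_2}\cong\Sigma_{K_1}\times\Sigma_{K_2}$ and the $L^2$-K\"unneth formula, this turns the whole argument into an induction on dimension.

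\emph{Part \ref{t:mainitemone}.} We would argue by induction on $n$. The cases $n\le 2$ are immediate, and $n=3,4$ are known (closed aspherical $3$-manifolds are $L^2$-acyclic, and \cite{do01} settles dimension $4$). For the inductive step, fix a sequence $L_0\rightsquigarrow L_1\rightsquigarrow\cdots\rightsquigarrow L_N=b\d\Delta^n$ of edge subdivisions with $L_0$ a flag triangulation of $S^{n-1}$ having $L^2$-acyclic Davis complex. Since edge subdivision preserves flagness, each $L_i$ is again a flag triangulation of $S^{n-1}$, so $\Sigma_{L_i}$ is a contractible $n$-manifold and the criterion applies at each step as soon as the link of the subdivided edge is acyclic in the required range. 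That link is, by the second input, a join of complexes $b\d\Delta^{j}$ with $j<n$ (together with lower links of the same shape); by the inductive hypothesis each factor satisfies the Singer conjecture, so by the K\"unneth formula for joins the join is acyclic below its middle dimension, which is exactly what the criterion needs. Iterating the criterion carries the Singer conjecture from $L_0$ to $b\d\Delta^n$.

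\emph{Part \ref{t:mainitemtwo}.} Let $S$ triangulate $S^{2n-1}$, so that $bS$ is a flag triangulation of $S^{2n-1}$ and $\Sigma_{bS}$ is a $2n$-manifold; again $bS$ is to be reached by edge subdivisions from a flag triangulation with $L^2$-acyclic Davis complex. The link of a subdivided edge is now a join of some $b\d\Delta^{j}$'s with the barycentric subdivision of a link in $S$, which triangulates a sphere of dimension $<2n-1$. The $b\d\Delta^{j}$ factors are handled by Part \ref{t:mainitemone}; a lower-dimensional barycentric subdivision is either an instance of Part \ref{t:mainitemtwo} for a smaller $n$, when the surviving sphere is odd-dimensional, or has Davis complex splitting off an $\rr$-factor and hence $L^2$-acyclic, when it is even-dimensional. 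In every case the parity works out so that the join is acyclic in the range the criterion requires, and iterating the criterion yields the Singer conjecture for $P_{bS}$.

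\emph{Main obstacle.} The heart of the argument is the first combinatorial input: producing, for every $n$, an explicit sequence of edge subdivisions that realizes $b\d\Delta^n$ (resp.\ $bS$) \emph{and} along which the links of the subdivided edges have the join-of-smaller-pieces form that drives the induction. One must also match the acyclicity produced by the K\"unneth computation to the precise hypothesis of the edge subdivision criterion, keeping careful track of the distinction between odd-dimensional Davis complexes, where the Singer conjecture means $L^2$-acyclicity, and even-dimensional ones, where it means concentration in the middle degree. Once the criterion is in hand this bookkeeping is routine; the combinatorial organization of the subdivision sequence, and the proof of the criterion itself, are where the work lies.
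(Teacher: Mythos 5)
Your overall architecture for part (\ref{t:mainitemone}) -- start from the boundary $O^n$ of the octahedron, reach $b\d\Delta^n$ by iterated edge subdivisions, control links by join decompositions, finish with K\"unneth and Poincar\'e duality -- is the right skeleton, but the step that drives your induction is precisely the unproven content, and it is not the statement that is actually true or needed. You assert that at every intermediate stage the link of the edge about to be subdivided is a join of complexes $b\d\Delta^j$, $j<n$, to which an inductive Singer hypothesis applies. The intermediate complexes between $O^n$ and $b\d\Delta^n$ are partial \emph{relative} subdivisions, and their edge links are not joins of full barycentric subdivisions of simplex boundaries; no argument is offered, and the identification is doubtful as stated. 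The paper proves instead (a) the combinatorial fact that $b(\d\Delta^n,K)$ is an iterated edge subdivision of $O^n$ for any flag subcomplex $K$ (Lemmas \ref{l:psubdiv}, \ref{l:pies}, \ref{l:twosubs}, Corollary \ref{c:octahedron}), and (b) a propagation mechanism (Lemma \ref{l:subcases} together with Corollary \ref{c:concentrated+greater}) in which the hypotheses are imposed only on links of odd-dimensional simplices of the \emph{starting} complex and are shown to be preserved by every edge subdivision, so the intermediate links never need to be identified. For $O^n$ (with $n$ odd, resp.\ via the cone trick for $n$ even) all these links are suspensions, hence $L^2$-acyclic, so no induction on the Singer conjecture and no K\"unneth computation with middle-concentrated factors is needed; moreover the acyclicity statement one must prove is the \emph{relative} one, $b_*^{(2)}(W_{b(\d\Delta^{2k-1},K)};\F)=0$ (Theorem \ref{t:vanishingtomei}), because relative subdivisions, not absolute ones, are what occur downstream.

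For part (\ref{t:mainitemtwo}) there are two genuine errors. First, for a general triangulation $S$ of $S^{2n-1}$ there is no flag, $L^2$-acyclic complex playing the role of $O^n$ from which $bS$ is an iterated edge subdivision ($S$ itself need not be flag, nor are the intermediate complexes), so your edge-subdivision route has no valid starting point; the paper instead works inside $bS$ (more generally $b(L,K)$) and removes \emph{vertices}, namely the barycenters, via Lemma \ref{l:v-remove} (this is Theorem \ref{t:partialsinger}). Second, your claim that when the surviving sphere factor is even-dimensional its barycentric subdivision ``has Davis complex splitting off an $\rr$-factor and hence is $L^2$-acyclic'' is false: there is no such splitting, and $L^2$-acyclicity there is exactly the odd-dimensional Singer conjecture, which is open for general even-sphere triangulations (the paper itself only obtains vanishing outside the two middle degrees, Corollary \ref{c:vanishingbs}, and explicitly cannot do better). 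With only two-middle-degree vanishing for such a factor, your degree bookkeeping does not close: the link of the relevant edge can then be nonzero in one degree above the required range, and the criterion fails to kill $b_{n+1}^{(2)}$ of the subdivided complex, after which duality cannot rescue concentration. The paper avoids this entirely: the link of the barycenter of an odd-dimensional simplex $\gs$ is $b(\d\gs,\d\gs\cap K)\ast b\Lk_L\gs$, and the \emph{first} factor is already fully acyclic by the relative odd case of part (\ref{t:mainitemone}), so K\"unneth kills the join with no information about the second factor; the even-dimensional barycenters are then removed in increasing order of dimension using crude dimension counts rather than any Singer-type input. So the parity ``works out'' only with that trick, not with the inductive scheme you describe.
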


Though the first item of Theorem \ref{t:main} may seem a bit artificial, it is used to prove the second item.
In fact, the manifolds $P_{b\d \Delta^n}$ come up in various ways in geometric topology/geometric group theory.
For example, Tomei \cite{t84} showed that the space $T^{n}$ of the real, symmetric, tridiagonal $(n+1) \times (n+1)$-matrices with fixed simple spectrum is a closed, aspherical manifold, and Davis \cite{d87} showed that $T^{n}$ is commensurable with $P_{b\d \Delta^n}$.
These also come up in various hyperbolization procedures.
Gromov introduced the ``M\"{o}bius band procedure'' and the ``product with interval procedure'', which inputted a cell complex (cubical in the first case) and outputted a nonpositively curved cube complex.
In \cite{cd95c}, Charney and Davis showed that the M\"{o}bius band procedure applied to $\d[0,1]^n$ and the product with interval procedure applied to $\d \Delta^n$ are again commensurable to $P_{b\d \Delta^n}$.

We now explain why computing the $L^2$-homology of $\Sigma_{bL}$ for barycentric subdivisions is easier than for general flag triangulations.
The program developed in \cite{do01} relied on Mayer--Vietoris arguments, where a flag triangulation $L$ is cut into pieces along links of vertices: $L = L-v \cup_{\Lk v} \St v$.
This leads to a decomposition of $\Sigma_L$ into pieces whose components are copies of $\Sigma_{L-v}$ and $\Sigma_{\St v}$ intersecting along copies of $\Sigma_{\Lk v}$.
For barycentric subdivisions, the link structure of vertices is well known: they decompose as joins of barycentric subdivisions of lower dimensional complexes.
Therefore, via K\"{u}nneth formulas in $L^2$-homology they often can be shown to have trivial $L^2$-homology, hence by Mayer--Vietoris we can remove them from $L$ without changing $b_\ast^{(2)}(\Sigma_L)$.
For barycentric subdivisions of $S^{2n-1}$, for $n \le 4$, there are enough vertices with $\Sigma_{\Lk v}$ $L^2$-acyclic such that the resulting complex has dimension $n-1$.
Therefore, the Davis manifolds have the same $L^2$-Betti numbers as an $n$-dimensional subcomplex; so in particular the $L^2$-homology of $\Sigma_L$ vanishes above the middle dimension and Poincar\'{e} duality implies it vanishes below the middle dimension.

However, this strategy breaks in odd dimensions.
For example, if $L = b\d \Delta^3$, then we can remove from $L$ all barycenters of edges, as their links are $4$-cycles, and $\Sigma_{\Lk v} \cong \rr^2$ which is $L^2$-acyclic.
We are left with the $1$-skeleton of a $3$-cube, there are no more vertices with $L^2$-acyclic links, and it is not obvious that the $L^2$-homology of the corresponding Davis complex vanishes.
In this specific case, the group acts on $\mathbb{H}^{3}$ by reflections in the faces of an ideal right-angled octahedron, which implies $L^2$-acyclicity by analytic methods, see e.g.
\cite{ll95}.
Or, one can use L\"{u}ck's mapping torus theorem \cite{l94a} as the RACG virtually fibers via the ``combinatorial game'' of Jankiewicz--Norin--Wise \cite{jnw21}, but (as far as we know) these proofs cannot be adapted to higher dimensions.

To handle this, the new tool comes from the skew field approach to $L^2$-Betti numbers, first introduced by Linnell for $\Q$-coefficients \cite{l93} and explored by Jaikin-Zapirain, Henneke--Kielak, and others for general $\F$-coefficients \cite{j21}, \cite{hk21}.
For a large class of groups $\Q G$ embeds into a skew field $\cD_{\Q G}$, and if $Y$ is a $G$-complex, one can compute the usual $L^2$-Betti numbers of $Y$ by taking the associated Betti numbers $b^G_\ast(Y; \cD_{\Q G})$ of the equivariant cohomology of $Y$ with $\cD_{\Q G}$ coefficients.
Moreover, Jaikin-Zapirain \cite{j21} showed that if $G$ is residually (locally indicable amenable), this embedding is \emph{universal in Cohn's sense}.
By definition, this says that for any skew field $D$ and homomorphism $\phi: \Q G \to D$, we have the inequality $b^G_*(Y;\cD_{\Q G}) \leq b^G_*(Y;D)$.

Universality allows us to remove open stars of edges of $L$ with $L^2$-acyclic links rather than vertices and still have some control over $b_\ast^{(2)}$.
Removing edges is somewhat unnatural, as the remaining complex $R = L \smallsetminus \mathring{\St}_Le$ is not a full subcomplex of $L$, hence the homomorphism $W_{R} \rightarrow W_{L}$ induced by inclusion is not injective and the lifts of $P_R$ to $\Sigma_L$ are not copies of $\Sigma_R$.
However, we still get a decomposition of $\Sigma_L$, so can use Mayer--Vietoris, and the resulting lift of $P_R$ in $\Sigma_L$ is precisely the covering space associated to the homomorphism $W_{R} \rightarrow W_{L}$.
Therefore, we get an intermediate cover of $P_R$ which has the same $L^2$-Betti numbers as $\Sigma_L$.

Since the commutator subgroups $C_L, C_R$ of $W_L$ and $W_R$ respectively are special in Haglund and Wise's sense, they are residually torsion-free nilpotent, and hence the $L^2$-Betti numbers of both the universal cover and the intermediate cover can be computed using the skew fields $\cD_{\Q C_R}$ and $\cD_{\Q C_L}$ respectively.
So, universality implies that the $L^2$-Betti numbers of the universal cover $\Sigma_R$ are bounded above by the $L^2$-Betti numbers of the intermediate cover; hence if $\Sigma_L$ has vanishing $L^2$-homology so does $\Sigma_R$.

For example, if $L$ is the $1$-skeleton of a $3$-cube, then $L$ is a subcomplex of the suspension of a hexagon, and is obtained from this suspension by removing $6$ edges.
The link of each of these edges is two points, hence $\Sigma_{\Lk e} \cong \rr$ which is $L^2$-acyclic.
Since the Davis complex of the suspension of the hexagon is $L^2$-acyclic and removing one edge does not change the links of the others, the $L^{2}$-acyclicity of $\Sigma_L$ follows.

One can also phrase these edge removals in terms of edge subdivisions, and it is usually more convenient to think in these terms.
As the Davis complex associated to the link of a midpoint of a subdivided edge is $L^2$-acyclic, Mayer--Vietoris implies that the Davis complexes of the subdivided complex and $L \smallsetminus \oSt e$ have the same $L^2$-Betti numbers.
We use the fact that $b\d \Delta^n$ is an iterated edge subdivision of the boundary of the $n$-octahedron to complete the proof of Theorem \ref{t:main} (\ref{t:mainitemone}), and using part (\ref{t:mainitemone}), part (\ref{t:mainitemtwo}) follows from similar Mayer--Vietoris arguments as in \cite{do04}.
More generally, Volodin in \cite{v10} showed that if $L$ is a flag nested set complex associated to a building set for $\d \Delta^n$ (see e.g.
\cite{pnw08}), then $L$ is an iterated edge subdivision of the boundary of an $n$-octahedron.
Our argument proves the Singer conjecture for such $P_L$.

\subsection{Torsion growth} In Theorem \ref{t:vanishingtomei} and Corollary \ref{c:vanishingbs}, we prove a more general version of Theorem \ref{t:main}.
Firstly, we prove vanishing of homology with $\DFG$-coefficients for general $\F$ rather than just the usual $L^2$-Betti numbers.
We also consider relative barycentric subdivisions, where we only subdivide simplices outside of some flag subcomplex.
One motivation for considering these two generalizations comes from a variation of the Singer conjecture for torsion homology growth for closed aspherical manifolds.
\begin{torsionconj}[\cite{l13}*{1.12(2)}]\label{luckconj}
    Let $M^n$ be a closed aspherical $n$-manifold with residually finite fundamental group.
    Let $G_k \nsgp G = \pi_1(M^n)$ be any normal chain of finite index subgroups with $\bigcap_{k} G_k = 1$.
    If $i \neq (n-1)/2$, then
    \[
    \limsup_k \frac{\logtor H_{i}(G_k;\zz)} {[G:G_k]} = 0.
    \]
\end{torsionconj}
L\"{u}ck's Approximation Theorem identifies $b_\ast^{(2)}(\widetilde M_n; G)$ with the rational homology growth of the $G_k$.
Even when $\cD_{\F_p G}$ is known to exist, there is no complete statement relating $b_\ast^{G}(\widetilde M^n; \cD_{\F_p G})$ with the $\F_p$-homology growth, however there are approximation results for certain chains.
Now, in \cite{aos21}, we showed that there existed flag triangulations of $S^6$ such that the corresponding manifolds $P_{S^{6}}$ had linear $\F_p$-homology growth outside of the middle dimension (for some chains).
If these manifolds additionally satisfied the Singer conjecture, then universal coefficients would imply these manifolds were counterexamples to the Torsion concentration conjecture.
However, we could not prove the Singer conjecture for these examples, hence could only say the two conjectures were incompatible.

Our construction in~\cite{aos21} started from a flag triangulation $T$ of $S^6$ which was a barycentric subdivision relative to an embedded complex $OL$ with $b_4^{(2)}(W_{OL}; \Fp) \ne 0$.
We showed that the $\Fp$-version of the Singer conjecture failed either for the right-angled Coxeter group associated to $T$ or to the link of an odd-dimensional simplex in $T$.
In either case, this produced a $7$-dimensional counterexample, though in a non-explicit way.

Using the computation in Theorem \ref{t:vanishingtomei}, we can improve these counterexamples in two ways.
We can show now that $T$ itself is a counterexample to the $\Fp$-version of the Singer conjecture.
Moreover, we also produce flag triangulations of $S^{13}$, which satisfy the Singer conjecture but violate the $\Fp$-version.
We can promote these examples to higher even dimensions.
These appear to be the first examples of closed aspherical manifolds of this sort.
Although we still do not know of any such manifolds in odd dimensions, we obtain explicit counterexamples to the Torsion concentration conjecture in even dimensions $\geq 14$ and all dimensions $\geq 21$.
We note that in \cite{kkl23}*{Conjecture 6.11}, a modified version of the conjecture has been proposed, which is still open.
Instead of concentration, it predicts that the alternating sum of torsion growths of an odd-dimensional closed aspherical manifold limits to the $L^2$-torsion of the universal cover.

\section{Skew fields}\label{s:skew}

We now recall the skew field approach to $L^2$-Betti numbers and their finite field analogues.
Let $G$ be a group, $\F$ a field, $Y$ be a free $G$-CW complex, and suppose we have a homomorphism $\phi: \F G \to D$ to a skew field.
The composition of $\phi$ with the homomorphism $\zz G \to \F G$ turns $D$ into a $\zz G$-bimodule, so we can take equivariant homology of $Y$ with coefficients in $D$
\[
H_{*}^{G}(Y ; D)=H_{*}( D \tens_{\zz G} C_*(Y) ),
\]
and define the \emph{equivariant Betti numbers with coefficients in $D$ of $Y$} by taking its dimension over $D$:
\[
b^G_{*}(Y; D)= \dim_{D} H_{*}^{G}( Y; D ).
\]

We record some easy basic facts about $H_{*}^{G}( Y; D )$.
\begin{lemma}\label{l:agra}
    Suppose that $Y$ is a free $G$-complex and $\phi: \F G \rightarrow D$ where $D$ is a skew field.
    Then we have the following:
    \begin{enumerate1}

        \item Suppose $Y_0 \subset Y$ is a $G$-invariant subcomplex, then there is a long exact sequence
        \[
        \dots \to H^G_k(Y_0;D)\to H^G_k(Y;D) \to H^G_k(Y, Y_0; D)\to H^G_{k-1}(Y, Y_0; D) \to \dots
        \]
        \item If $Y=Y_1 \cup Y_2$ where $Y_1$ and $Y_2$ are $G$-invariant subcomplexes.
        Then $Y_1 \cap Y_2$ is $G$-invariant, and we have a Mayer--Vietoris sequence
        \[
        \dots \to H^G_k(Y_1 \cap Y_2;D)\to H^G_k(Y_1;D)\oplus H^G_k(Y_2;D)\to H^G_k(Y;D)\to H^G_{k-1}(Y_1 \cap Y_2;D) \to \dots
        \]
        \item\label{i:contain}
        If $D$ is contained in another skew field $D'$, then
        \[
        b^{G}_{*}(Y;D)=b^{G}_{*}(Y;D'),
        \]
        where the latter is computed using the composition $\F G \to D \into D'$.

        \item\label{i:induce}
        If $H < G$ is a subgroup and $Y_{0}$ is a free $H$-complex, then for $Y=G \times_{H} Y_{0}$ we have
        \[
        b^{G}_{*}(Y;D) = b^{H}_*(Y_{0};D),
        \]
        where the latter is computed using the composition $\F H \into \F G \to D$.
    \end{enumerate1}
\end{lemma}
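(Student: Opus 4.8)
The plan is to reduce each of the four items to an elementary statement about the chain complex $D \tens_{\zz G} C_*(Y)$. The one structural fact I would use throughout is that, because $Y$ is a free $G$-CW complex, $C_*(Y)$ is a complex of free left $\zz G$-modules, a $\zz G$-basis in each degree being a set of representatives for the $G$-orbits of cells. Consequently $D \tens_{\zz G} C_*(Y)$ is a complex of left $D$-modules, i.e. of $D$-vector spaces, and its homology is what computes $b^G_*(Y;D)$.

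For (1) and (2) I would argue at the chain level. Given a $G$-invariant subcomplex $Y_0 \subset Y$, the inclusion yields a short exact sequence of $\zz G$-chain complexes $0 \to C_*(Y_0) \to C_*(Y) \to C_*(Y,Y_0) \to 0$, and since $Y$ is free the relative complex $C_*(Y,Y_0)$ is itself $\zz G$-free (basis: the orbits of cells not in $Y_0$); hence the sequence splits in each degree as a sequence of $\zz G$-modules. Applying $D \tens_{\zz G} -$ therefore produces a short exact sequence of $D$-chain complexes, and its long exact homology sequence is the one claimed, the connecting homomorphism landing in $H^G_{k-1}(Y_0;D)$. For (2), $Y_1 \cap Y_2$ is plainly a $G$-invariant subcomplex, and one has the usual Mayer--Vietoris sequence $0 \to C_*(Y_1\cap Y_2) \to C_*(Y_1)\oplus C_*(Y_2) \to C_*(Y) \to 0$ of $\zz G$-chain complexes; since $C_*(Y)$ is $\zz G$-free this splits degreewise, so $D \tens_{\zz G} -$ keeps it exact and the long exact sequence follows.

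For (3), the $\zz G$-action on $D'$ factors through $D$, so there is a natural isomorphism $D' \tens_{\zz G} C_*(Y) \cong D' \tens_D \bigl(D \tens_{\zz G} C_*(Y)\bigr)$ of $D'$-chain complexes. As $D$ is a skew field, $D'$ is free as a left $D$-module, so $D' \tens_D -$ is exact and commutes with homology, giving $H^G_*(Y;D') \cong D' \tens_D H^G_*(Y;D)$. Finally, extension of scalars along $D \into D'$ sends a $D$-basis of a $D$-vector space to a $D'$-basis of its extension, so $\dim_{D'}(D' \tens_D V) = \dim_D V$; applied to $V = H^G_*(Y;D)$ this gives $b^G_*(Y;D') = b^G_*(Y;D)$.

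For (4), the standard description of the cellular chains of an induced complex gives $C_*(G \times_H Y_0) \cong \zz G \tens_{\zz H} C_*(Y_0)$ as $\zz G$-chain complexes; tensoring with $D$ over $\zz G$ and using associativity of the tensor product yields $D \tens_{\zz G} C_*(Y) \cong D \tens_{\zz H} C_*(Y_0)$, with $D$ now viewed as a $\zz H$-module via $\F H \into \F G \xrightarrow{\phi} D$. Passing to homology gives $H^G_*(Y;D) \cong H^H_*(Y_0;D)$ as $D$-vector spaces, hence the equality of Betti numbers. None of these arguments is hard, exactly as the word ``easy'' in the statement suggests; the only points deserving care are verifying that the short exact sequences in (1) and (2) split as sequences of $\zz G$-modules before one tensors with $D$ — which is precisely where freeness of $Y$ enters — and keeping track of the left/right module conventions that make $D \tens_{\zz G} C_*(Y)$ a left $D$-module.
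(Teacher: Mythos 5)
Your proof is correct, and it is essentially the argument the paper intends: the authors state these as ``easy basic facts'' and omit the proof, and your chain-level reductions (degreewise splitting from freeness of $C_*(Y)$ over $\zz G$ for (1) and (2), flatness/dimension preservation over a skew field for (3), and $C_*(G\times_H Y_0)\cong \zz G\tens_{\zz H}C_*(Y_0)$ plus associativity of tensor products for (4)) are exactly the manipulations the paper itself uses later, e.g.\ in the proof of Lemma~\ref{l:mess}. The only nitpicks are cosmetic: in (3) the relevant module structure is $D'$ as a \emph{right} $D$-module (freeness holds on either side since $D$ is a skew field), and your connecting map in (1) landing in $H^G_{k-1}(Y_0;D)$ silently corrects a typo in the paper's displayed sequence.
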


If $H \nsgp G$ and $\phi: \F H \rightarrow D$, let $\phig : \F H \to \Dg $ denote the composition of $\phi$ with the left conjugation by $g$, $\phig(h)=\phi(gh g^{-1} )$, so that $\Dg = D$ as abstract skew fields, but $\Dg \neq D$ as $\F H$-skew fields.
We shall need the following generalization of Lemma \ref{l:agra}\eqref{i:induce}.
\begin{lemma}\label{l:mess}

    Suppose $H \nsgp G$, $K<G$, $Y$ is a free $K$-complex and $\phi: \F H \rightarrow D$ where $D$ is a skew field.
    Let $P=HK < G$, then
    \[
    H^{H}_{*}( P\times_{K} Y;D) = H^{H\cap K}_*(Y;D)
    \]
    where the right-hand side is computed using the composition $\F [H \cap K] \into \F H \to D$, and
    \[
    H^{H}_{*}(G \times_{K} Y;D) = \bigoplus_{gP \in G/P} H^{H\cap K}_*(Y;\Dg)
    \]
    where the sum is over cosets $G/P$, and $\phig : \F H \to \Dg $ is the composition of $\phi$ with the left conjugation by $g$, $\phig(h)=\phi(gh g^{-1} )$ and the right-hand side uses $\F [H \cap K] \into \F H \to \Dg$.
    Moreover, the terms on the right-hand side are independent of the choice of representatives of cosets.
\end{lemma}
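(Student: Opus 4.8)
The plan is to reduce the second (more complicated) displayed identity to the first, and to prove the first by the same mechanism that underlies Lemma~\ref{l:agra}\eqref{i:induce}, namely an explicit $\F H$-bimodule decomposition of the coefficient module at the chain level. Set $P = HK$. Since $H \nsgp G$, $P$ is a subgroup of $G$, and $H \nsgp P$ with $P/H \cong K/(H\cap K)$. The key chain-level observation is that for a free $K$-complex $Y$, the $\F H$-module $D \tens_{\zz H} C_*(P \times_K Y)$ is naturally isomorphic to $D \tens_{\zz[H\cap K]} C_*(Y)$: indeed $C_*(P \times_K Y) \cong \zz P \tens_{\zz K} C_*(Y)$ as $\zz H$-modules (via the left $H$-action on the $\zz P$ factor, which makes sense because $H \nsgp P$), and then $D \tens_{\zz H} \zz P \cong D \tens_{\zz[H\cap K]} \zz K$ as $(D, \zz K)$-bimodules — this is the standard Mackey-type identification, using that the inclusion $H\cap K \into H$ and $K \into P$ induce a bijection $K/(H\cap K) \to P/H$ on coset spaces, so $\zz P$ is free as a right $\zz K$-module on coset representatives that simultaneously represent $H\backslash P$. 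Tensoring over $\zz K$ with $C_*(Y)$ then gives the claimed isomorphism of chain complexes, and passing to homology yields the first identity. One must check this isomorphism is compatible with the conjugated module structure: the $\F H$-action on the left-hand side is the given $\phi$ composed with the inclusion $\F[H\cap K] \into \F H$, because the chosen coset representative for the trivial coset is $1 \in P$.

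For the second identity, decompose $G$ into $P$-cosets, $G = \bigsqcup_{gP} gP$, and use the elementary decomposition of induced complexes
\[
G \times_K Y \;=\; \bigsqcup_{gP \in G/P} gP \times_K Y,
\]
where $gP \times_K Y$ denotes the $H$-subcomplex consisting of those components indexed by the chosen $P$-coset. Each piece $gP\times_K Y$ is $H$-invariant precisely because $H \nsgp G$ (so $Hg P = gP$), and the left translation by $g$ carries $gP \times_K Y$ to $P \times_K Y$ while twisting the $H$-action: $h$ acts on $gP\times_K Y$ the way $g^{-1}hg$ acts on $P \times_K Y$. Applying the first identity to $P \times_K Y$ but with the \emph{twisted} coefficient module $\Dg$ — i.e. with $\phig(h) = \phi(ghg^{-1})$ in place of $\phi$ — gives $H^H_*(gP\times_K Y; D) = H^{H\cap K}_*(Y; \Dg)$, and summing over $gP \in G/P$ (homology commutes with the direct sum over components) yields the stated formula. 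Independence of the choice of coset representatives follows because replacing $g$ by $gp$ with $p \in P$ changes $\phig$ to $\phi^{gp}$, and conjugation by $p \in P = HK$ differs from conjugation by its $K$-part only by conjugation by an element of $H$, which is an inner automorphism of the $\F H$-module structure and hence does not change $\dim_D$ or indeed the isomorphism type of $H^{H\cap K}_*(Y;-)$ as a graded $D$-vector space; conjugation by the $K$-part restricts to an automorphism of $H\cap K$ that is realized by the $K$-action on $Y$, so it also leaves the homology unchanged up to canonical isomorphism.

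The main obstacle I expect is purely bookkeeping: keeping straight the three interacting actions (the left $H$-action coming from normality, the right $K$-action defining the balanced product, and the $\phi$-module structure on $D$) and verifying that the Mackey isomorphism $D \tens_{\zz H} \zz P \cong D \tens_{\zz[H\cap K]} \zz K$ is an isomorphism of $(D,\zz K)$-bimodules with the correct left $\F H$-structure after twisting. There is no homological-algebra difficulty — everything is at the level of free chain complexes and flat (indeed free) base change — but one should be careful that the coset bijection $K/(H\cap K) \to P/H$ is used on the correct side, and that ``the choice of representatives'' in the final clause is exactly the ambiguity $g \rightsquigarrow gp$ absorbed by inner automorphisms of $\F H$ together with the $Y$-symmetry. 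I would present the $K = G$, $Y_0 = Y$ special case first as a warm-up (recovering Lemma~\ref{l:agra}\eqref{i:induce} with the conjugation twist made visible) and then run the general argument.
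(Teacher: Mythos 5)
Your proposal is correct and takes essentially the same route as the paper: the first formula via the identification $D \tens_{\zz H} \zz P \cong D \tens_{\zz[H\cap K]} \zz K$ of $(D,\zz K)$-bimodules (the paper phrases it as $\zz P \cong \zz H \tens_{\zz[H\cap K]} \zz K$ as $H$-$K$-bimodules, which is the cleaner way to see your ``Mackey'' step), and the second via the decomposition of $G\times_K Y$ along $G/P$ with each piece identified with $P\times_K Y$ at the cost of twisting the coefficients to $\Dg$. The only differences are cosmetic: you translate subcomplexes where the paper writes the module isomorphism $\Dg \tens_{\zz H} \zz P \cong D \tens_{\zz H} \zz[gP]$, which also gives independence of coset representatives for free, whereas you argue it separately (correctly) via inner automorphisms by elements of $H$ and the $K$-action on $Y$.
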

\begin{proof}
    The map $\zz P \rightarrow \zz H \tens_{\zz [H \cap K]} \zz K$ sending $hk \to h \tens k$ is well-defined, and it induces an isomorphism of $H$-$K$-modules.
    Then both sides in the first formula are homology of the same complex:
    \[
    D\tens_{\zz H} C_{*}( P\times_{K} Y)= D \tens_{\zz H} \zz P \tens_{\zz K} C_{*} (Y) \cong D \tens_{\zz H} \zz H \tens_{\zz [H \cap K]} \zz K \tens_{\zz K} C_{*}(Y) = D \tens_{\zz [H \cap K]} C_{*}(Y).
    \]

    For the second formula, we have a direct sum decomposition $\zz G= \bigoplus_{[g ] \in G/P} \zz[gP]$ as $H$-$K$-modules, since $h(gp)k= gg^{-1}hgpk=g (h^{g}pk)$.

    The map $d \tens p \mapsto d \tens gp$ induces an isomorphism of $D$-$K$-modules $\Dg\tens_{\zz H} \zz P \cong D\tens_{\zz H} \zz[gP] $, because it maps $hp$ to $ghp= ghg^{-1}gp= \leftindex[T]^{g}h gp$, and hence the tensor relation for $h$ in $\Dg\tens_{\zz H} \zz P$,
    \[
    d \leftindex[T ]^{g}h \tens p = d \tens hp,
    \]
    maps to the tensor relation for $\leftindex[T]^{g}h$ in $D\tens_{\zz H} \zz[gP]$:
    \[
    d \leftindex[T ]^{g}h \tens gp= d \tens\leftindex[T]^{g}h gp.
    \]
    Note that this implies that $\Dg\tens_{\zz H} \zz P$ is independent of the choice of representative $g$ of $gP$.
    Thus we have isomorphisms of chain complexes:
    \[
    \begin{split}
        D\tens_{\zz H} C_{*}( G \times_{K} Y)= D\tens_{\zz H} \zz G \tens_{\zz K} C_{*} (Y)= \bigoplus_{gP \in G/P} D\tens_{\zz H} \zz[gP] \tens_{\zz K} C_{*} (Y) \cong \\
        \bigoplus_{gP \in G/P} \Dg \tens_{\zz H} \zz P \tens_{\zz K} C_{*} (Y) = \bigoplus_{gP \in G/P} \Dg \tens_{\zz H} C_{*}( P\times_{K} Y)
    \end{split}
    \]
    Taking homology and noting that by the first part homology of each summand is $H^{H\cap K}_*(Y;\Dg)$ proves the second formula.
\end{proof}

A rather unpleasant feature of the general theory is that in general, $\Dg$ and $D$-Betti numbers are different.
If we put additional strong algebraic assumptions on $G$, then there is essentially a canonical choice of $D$, which avoids this problem, and which we now explain.

\subsection{Linnell skew fields}

Recall that a homomorphism to a skew field $D$ is \emph{epic} if the division closure of its image is $D$.
\begin{definition}
    We say that an epic skew field $\phi: \F G \rightarrow D$ is \emph{Linnell} if for any subgroup $K < G$, the map
    \[
    D_{|K} \tens_{\F K} \F G \rightarrow D; \qquad d \tens f \rightarrow d\phi(f)
    \]
    is injective, where $D_{|K}$ denotes the division closure of $\phi(\F K)$ in $D$.
\end{definition}

Note that if $D$ is a Linnell skew field, then $\phi: \F G \rightarrow D$ is injective, so at the very least $G$ must be torsion-free.
A fundamental theorem of Hughes \cite{h70} implies that for locally indicable groups Linnell skew fields are unique up to $\F G$-isomorphism.
We will say \emph{$\DFG$ exists} to mean that $G$ is locally indicable and there is a Linnell skew field $\phi: \F G \to \DFG$.
This condition passes to subgroups: if $K<G$ and $\DFG$ exists, then ${\DFG}_{|K}$ is Linnell over $\F K$, hence $\cD_{\F K}={\DFG}_{|K}$ exists.

Suppose $H < G$ is a finite index subgroup and $\DFG$ exists.
Then the Linnell condition implies that $\DFG \cong \DFH^{[G:H]}$ as vector spaces over $\DFH$, and therefore the $\DFG$-Betti numbers satisfy the same multiplicativity as the usual $L^2$-Betti numbers:
\[
b^{H}_{*}(Y; \DFH)=[G:H] b^{G}_{*}(Y; \DFG).
\]

This multiplicativity allows us to extend skew field Betti numbers to a virtual setting.
We will say \emph{$\DFG$ virtually exists} if there is a finite index subgroup $H < G$ such that $\DFH$ exists.
If $\DFG$ virtually exists and $Y$ is a proper $G$-complex, we define $b_\ast^{(2)}(Y; \F) := \frac{1}{[G:H]}b_\ast^{H}(Y; \DFH)$ for a finite index subgroup $H$ such that $\DFH$ exists, and multiplicativity implies this does not depend on the choice of $H$ (but they do depend on $G$, although we omit it from the notation).
We will refer to $b_\ast^{(2)}(Y; \F) $ as the $\F$-$L^{2}$-Betti numbers of $Y$ with respect to the $G$-action.
We will denote $b_\ast^{(2)}(G; \F) := b_\ast^{(2)}(Y; \F) $ for a contractible $Y$.
\begin{lemma}\label{l:ind}
    Suppose $ K< G$ is a subgroup, $\DFG$ virtually exists, and $Y$ is a proper $K$-complex.
    Then
    \[
    b^{(2)}_{*}(G\times_{K}Y; \F) = b^{(2)}_*(Y; \F).
    \]
\end{lemma}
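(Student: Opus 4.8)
The plan is to reduce the claim to a statement about the canonical Linnell skew field of a single \emph{normal} finite-index subgroup, and then to cut $G\times_K Y$ into double-coset pieces on which the induction formula applies. Concretely, since $\cD_{\F G}$ virtually exists I would first pick a finite-index $H<G$ with $\cD_{\F H}$ existing and replace it by its normal core, so that $N\nsgp G$ is normal of finite index and $\cD_{\F N}$ exists. Being a subgroup of $N$, $N\cap K$ is torsion-free and $\cD_{\F(N\cap K)}$ exists; also $[K:N\cap K]\le[G:N]<\infty$, so $\cD_{\F K}$ virtually exists and, by definition, the two sides we must compare are $b^{(2)}_*(Y;\F)=\tfrac{1}{[K:N\cap K]}\,b^{N\cap K}_*(Y;\cD_{\F(N\cap K)})$ and $b^{(2)}_*(G\times_K Y;\F)=\tfrac{1}{[G:N]}\,b^{N}_*(G\times_K Y;\cD_{\F N})$. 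Hence it suffices to prove
\[
b^{N}_{*}(G\times_K Y;\cD_{\F N})=\frac{[G:N]}{[K:N\cap K]}\,b^{N\cap K}_{*}(Y;\cD_{\F(N\cap K)}).
\]

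Next I would decompose $G\times_K Y$ as an $N$-CW complex. Writing $G=\bigsqcup_{i=1}^{m}Ng_iK$ as a finite union of $(N,K)$-double cosets and pulling back the $N$-equivariant map $G\times_K Y\to G/K$ along the decomposition of $G/K$ into $N$-orbits gives $G\times_K Y=\bigsqcup_{i=1}^m Z_i$, where $Z_i$ is the part over the orbit of $g_iK$. One checks $Z_i\cong N\times_{A_i}Y_i$ as $N$-complexes, with $A_i=N\cap g_iKg_i^{-1}=\operatorname{Stab}_N(g_iK)$ and $Y_i$ the space $Y$ equipped with the $A_i$-action $a\cdot y=(g_i^{-1}ag_i)y$; since $N$ is normal, $g_i^{-1}ag_i\in N\cap K$, so this action factors through the conjugation isomorphism $\gamma_i\colon N\cap K\xrightarrow{\ \sim\ }A_i$, $\gamma_i(x)=g_ixg_i^{-1}$. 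As $N$ is torsion-free, the proper $K$-complex $Y$ is a \emph{free} $(N\cap K)$-complex, hence each $Y_i$ is a free $A_i$-complex. Applying Lemma~\ref{l:agra}\eqref{i:induce} (with $N$, $A_i$, $Y_i$ and $\cD_{\F N}$) and then Lemma~\ref{l:agra}\eqref{i:contain} (passing to the division closure $\cD_{\F A_i}$ of $\F A_i$ inside $\cD_{\F N}$) yields $b^{N}_*(Z_i;\cD_{\F N})=b^{A_i}_*(Y_i;\cD_{\F A_i})$, and since $H^N_*$ of a disjoint union splits as a direct sum,
\[
b^{N}_*(G\times_K Y;\cD_{\F N})=\sum_{i=1}^m b^{A_i}_*(Y_i;\cD_{\F A_i}).
\]

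Finally I would identify every summand with $b^{N\cap K}_*(Y;\cD_{\F(N\cap K)})$ and count double cosets. The isomorphism $\gamma_i$ carries the $(N\cap K)$-complex $Y$ to the $A_i$-complex $Y_i$, and transporting the Linnell embedding $\F(N\cap K)\into\cD_{\F(N\cap K)}$ along $\gamma_i$ produces an epic Linnell skew field over $\F A_i$, which by Hughes' uniqueness theorem for locally indicable groups \cite{h70} is $\F A_i$-isomorphic to $\cD_{\F A_i}$. Thus the triple $(A_i,Y_i,\cD_{\F A_i})$ is isomorphic to $(N\cap K,Y,\cD_{\F(N\cap K)})$, the chain complexes computing the two Betti numbers agree, and $b^{A_i}_*(Y_i;\cD_{\F A_i})=b^{N\cap K}_*(Y;\cD_{\F(N\cap K)})$ for every $i$. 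Since each double coset $Ng_iK$ is a union of exactly $[K:N\cap K]$ left $N$-cosets (the stabilizer of $Ng_i$ under right multiplication by $K$ is $g_i^{-1}Ng_i\cap K=N\cap K$, using normality of $N$), we have $[G:N]=m\,[K:N\cap K]$, i.e.\ $m=[G:N]/[K:N\cap K]$, and substituting into the sum proves the displayed identity and hence the lemma. I expect the last identification to be the only real obstacle: it is exactly the ``$\cD^g$ versus $\cD$'' phenomenon flagged earlier, and it is the uniqueness of Linnell skew fields — together with the normality of $N$, which forces all the $A_i$ to be conjugate back to the single subgroup $N\cap K$ — that makes the summands equal and the count clean.
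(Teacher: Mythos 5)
Your proof is correct and is in substance the paper's own argument: pass to a finite-index normal subgroup $N \nsgp G$ with $\cD_{\F N}$ existing, decompose $G\times_K Y$ over the cosets of $NK$ (equivalently the $(N,K)$-double cosets), use Hughes' uniqueness to identify each conjugation-twisted summand with $b^{N\cap K}_*(Y;\cD_{\F(N\cap K)})$, and finish with the index identity $[G:NK]\,[K:N\cap K]=[G:N]$. The only difference is presentational: the paper packages the decomposition algebraically as Lemma~\ref{l:mess}, putting the twist on the coefficients (the $\Dg$-Betti numbers of the fixed $(H\cap K)$-complex $Y$) and then invoking Hughes to get $\Dg\cong \DFH$ as $\F H$-rings, whereas you re-derive the same decomposition geometrically, putting the twist on the subgroups $A_i=N\cap g_iKg_i^{-1}$ and spaces $Y_i$ and untwisting via Hughes over $\F A_i$ --- an equivalent bookkeeping of the same ingredients.
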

\begin{proof}
    Let $H \nsgp G$ be a finite index normal subgroup such that $\DFH$ exists.
    Then by Hughes' theorem $\DFH^{g} \cong \DFH$ as $\F H$-rings.
    So by Lemma~\ref{l:mess}
    \[
    \begin{split}
        b^{(2)}_{*}(G\times_{K}Y; \F) = \frac{b_\ast^{H}(G\times_{K}Y; \DFH)}{[G:H]}= \frac{[G:HK] b_\ast^{H\cap K }(Y; \DFH)}{[G:H]} = \frac{[G:HK] b_\ast^{H\cap K }(Y; \cD_{\F[H\cap K]})} {[G:H]} \\
        = \frac{[G:HK] [K:H\cap K]b^{(2)}_*(Y; \F)}{[G:H]} = \frac{[G:HK] [HK: H] b^{(2)}_*(Y; \F)}{[G:H]} = b^{(2)}_*(Y; \F).
    \end{split}
    \]
\end{proof}

If $\F = \Q$ and $G$ is locally indicable, then Jaikin-Zapirain and López-Álvarez~\cite{jl20} showed that $\DQG$ exists, and if $Y$ is a free cocompact $G$-complex, then $b^{G}_{*}(Y; \DQG)$ agrees with the usual $L^2$-Betti numbers $b^{(2)}_{*}(Y; G)$.

Moreover, for general $\F$, Jaikin-Zapirain in \cite{j21} showed that if $G$ is residually (locally indicable amenable), then $\DFG$ exists and is universal in the sense that for any other $\F G$-skew field $D$, $b^{G}_{*}(Y; \DFG) \leq b^{G}_{*}(Y; D)$.

We now explain a somewhat simpler argument for a weaker form of universality for residually torsion-free nilpotent groups, which is sufficient for our purposes.
Note that a residually torsion-free nilpotent group $G$ is bi-orderable, and in this case Malcev and Neumann showed that the set of functions $f: G \rightarrow \F$ with well-ordered support endowed with convolution multiplication is a skew field $MN_{<}(G)$.
We can take $\DFG$ to be the division closure of $\F G$ inside a (any) $MN_{<}(G)$.

\subsection{Relation to homology growth} In \cite{aos24}, we used work of Linnell, L\"uck, and Sauer in \cite{lls11} to prove the following:
\begin{Theorem}[\cite{aos24}*{Theorem 3.6}]\label{t:binf}
    Let $G$ be a residually torsion-free nilpotent group and let $Y$ be a free cocompact $G$-complex.
    Then

    \[
    b^{G}_{*}(Y; \DFG) = \inf_{\substack{H <G \\
    [G:H] < \infty}} \frac{b_{*}(Y/H; \F)}{[G:H]}.
    \]
\end{Theorem}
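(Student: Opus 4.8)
The plan is to prove the two inequalities separately. We may assume $Y$ is connected (otherwise restrict attention to one $G$-orbit of components, using Lemma~\ref{l:agra}\eqref{i:induce}), so that $G$ is finitely generated; write $C_* = C_*(Y;\F)$, a finite complex of finitely generated free $\F G$-modules, with boundary matrices $\partial_k$ over $\F G$ and $c_k = \operatorname{rk}_{\F G}C_k$. For any homomorphism $\F G\to R$ to a skew field $R$ one has $b^G_k(Y;R) = c_k - \operatorname{rk}_R(\partial_k) - \operatorname{rk}_R(\partial_{k+1})$, reading $\partial_k$ as a matrix over $R$; so everything below is a comparison of ranks of the $\partial_k$ after coefficient changes.

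\emph{The inequality ``$\leq$''.} Fix a finite-index subgroup $H<G$. As a subgroup of a residually torsion-free nilpotent group, $H$ is again residually torsion-free nilpotent, hence residually (locally indicable amenable); thus $\DFH$ exists and, by Jaikin-Zapirain's universality \cite{j21}, the Betti numbers over $\DFH$ are minimal among all epic $\F H$-skew fields. Applying this to the augmentation $\F H\to\F$ (whose image is all of $\F$, hence epic) gives $b^H_*(Y;\DFH)\leq b^H_*(Y;\F)$, and $b^H_*(Y;\F)=b_*(Y/H;\F)$ since the equivariant $\F$-homology of the free $H$-complex $Y$ with coefficients in the trivial module $\F$ is the ordinary $\F$-homology of $Y/H$. (In rank terms: $\DFH$ realizes the inner rank of a matrix over $\F H$, which dominates its rank under any homomorphism to a field, in particular the augmentation.) By multiplicativity of skew-field Betti numbers under finite index, $b^G_*(Y;\DFG)=\tfrac{1}{[G:H]}b^H_*(Y;\DFH)$, hence $b^G_*(Y;\DFG)\leq b_*(Y/H;\F)/[G:H]$ for every finite-index $H$, and therefore $b^G_*(Y;\DFG)\leq\inf_H b_*(Y/H;\F)/[G:H]$.

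\emph{The inequality ``$\geq$''.} By the previous paragraph it suffices to exhibit one descending chain $G=G_0\supseteq G_1\supseteq\cdots$ of finite-index normal subgroups with $\bigcap_i G_i=\{1\}$ along which $b_*(Y/G_i;\F)/[G:G_i]\to b^G_*(Y;\DFG)$; then $b^G_*(Y;\DFG)\leq\inf_H b_*(Y/H;\F)/[G:H]\leq\lim_i b_*(Y/G_i;\F)/[G:G_i]=b^G_*(Y;\DFG)$, so equality holds throughout. Such a chain exists: if $\operatorname{char}\F=0$, take any chain with trivial intersection ($G$ is finitely generated and residually torsion-free nilpotent, hence residually finite); if $\operatorname{char}\F=p$, use that a finitely generated torsion-free nilpotent group is residually a finite $p$-group to arrange every $G/G_i$ to be a finite $p$-group. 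Let $\partial_k^{(i)}$ be the reduction of $\partial_k$ modulo $G_i$, a matrix over $\F[G/G_i]$ that we view as an $\F$-linear map of finite-dimensional $\F$-vector spaces; then $b_k(Y/G_i;\F)=c_k[G:G_i]-\operatorname{rk}_\F(\partial_k^{(i)})-\operatorname{rk}_\F(\partial_{k+1}^{(i)})$, so the convergence we need reduces to $\operatorname{rk}_\F(\partial_k^{(i)})/[G:G_i]\to\operatorname{rk}_{\DFG}(\partial_k)$ for every $k$. In characteristic $0$ this is L\"uck's approximation theorem; in characteristic $p$, along a tower of finite $p$-group quotients, it is exactly the content of the work of Linnell--L\"uck--Sauer \cite{lls11}.

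\emph{Main obstacle.} The only real difficulty is this last convergence in positive characteristic: unlike the $L^2$-setting there is no a priori monotonicity or semicontinuity of $\operatorname{rk}_\F(\partial_k^{(i)})/[G:G_i]$, and the argument of \cite{lls11} must exploit special structure of the situation --- notably that $\F[P]$ is a local ring with residue field $\F$ when $P$ is a finite $p$-group (recall $\operatorname{char}\F=p$), together with an amenability argument along the tower of $p$-quotients --- to control the limit and identify it with the skew-field rank; I would invoke \cite{lls11} as a black box. Everything else --- the passage through multiplicativity, the universality (equivalently, inner-rank) input of \cite{j21}, the construction of the tower, and the rank bookkeeping --- is routine. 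The one point needing a little care is the reduction to $Y$ connected, which is what supplies a cofinal chain of finite-index subgroups with trivial intersection.
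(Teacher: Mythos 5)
Your ``$\leq$'' direction is sound: residual torsion-free nilpotence passes to the finite-index subgroup $H$, so $\DFH$ exists, universality (or the weaker Malcev--Neumann argument the paper sketches) applied to the augmentation $\F H \to \F$ gives $b^H_*(Y;\DFH)\le b_*(Y/H;\F)$, and multiplicativity converts this into $b^G_*(Y;\DFG)\le b_*(Y/H;\F)/[G:H]$. The characteristic~$0$ case of your ``$\geq$'' direction is also fine, since L\"uck approximation together with the identification of $b^G_*(Y;\DQG)$ with the usual $L^2$-Betti numbers \cite{jl20} does what you want along any residual chain.

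The gap is in the characteristic~$p$ case of ``$\geq$''. You invoke \cite{lls11} for a tower of normal subgroups of $G$ of $p$-power index with trivial intersection, but the main theorem of \cite{lls11} requires the group in question to be amenable (this hypothesis is in the very title of that paper): it identifies the limit of normalized $\F_p$-Betti numbers with an Ore-localization rank for amenable groups only. A residually torsion-free nilpotent group is in general very far from amenable --- free groups and the subgroups of right-angled Artin groups that this paper actually cares about are the typical examples --- and for such $G$ the convergence $b_k(Y/G_i;\F_p)/[G:G_i]\to b^G_k(Y;\cD_{\F_p G})$ along a residual chain with $\bigcap_i G_i=\{1\}$ is not a theorem; it is essentially the open characteristic-$p$ approximation problem, and \cite{lls11} cannot be used as a black box for it. This is exactly the step the cited proof is organized to avoid: as recalled in the proof of Theorem~\ref{t:bpinf}, the argument of \cite{aos24} first establishes $b^{G}_{*}(Y;\cD_{\F_p G})=b^{N}_{*}(Y/K;\cD_{\F_p N})$ for a suitable torsion-free nilpotent quotient $K\to G\to N$ --- this reduction is the key nontrivial input, and it is what your plan is missing --- and only then applies \cite{lls11} to the amenable group $N$ acting on $Y/K$, along chains of finite-index subgroups intersecting in $K$ rather than in the identity. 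Note also that the infimum formula does not force you to produce a chain with trivial intersection realizing the infimum; once the quotient identity is in place, chains of subgroups containing $K$ suffice, since the ``$\leq$'' direction already bounds the infimum from below. Without the reduction to a nilpotent quotient, your argument does not go through in positive characteristic.
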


An immediate corollary is a weak form of universality of $\DFG$.
\begin{lemma}\label{l:universality}
    Suppose $K \nsgp G \onto Q$, where $G$ and $Q$ are residually torsion-free nilpotent.
    Let $Y$ be a free cocompact $G$-complex.
    Then
    \[
    b^{G}_{*}(Y; \DFG) \le b^{Q}_{*}(Y/K ; \DQ).
    \]
\end{lemma}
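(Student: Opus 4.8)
The plan is to deduce this from Theorem~\ref{t:binf} together with the observation that finite-index subgroups of $Q$ pull back to finite-index subgroups of $G$ of the same index, and that homology only grows under passing to covers via the transfer/covering relationship. First I would apply Theorem~\ref{t:binf} to both sides: since $G$ is residually torsion-free nilpotent, $\DFG$ exists and
\[
b^{G}_{*}(Y; \DFG) = \inf_{\substack{H <G \\ [G:H] < \infty}} \frac{b_{*}(Y/H; \F)}{[G:H]},
\]
and similarly, since $Q$ is residually torsion-free nilpotent and $Y/K$ is a free cocompact $Q$-complex (as $K$ acts freely and cocompactly on $Y$, the quotient $Y/K$ carries a free cocompact $Q = G/K$ action),
\[
b^{Q}_{*}(Y/K; \DQ) = \inf_{\substack{\bar H <Q \\ [Q:\bar H] < \infty}} \frac{b_{*}((Y/K)/\bar H; \F)}{[Q:\bar H]}.
\]

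The key step is then to show that every finite-index subgroup $\bar H < Q$ contributes a term that already appears (or is bounded below by a term) on the $G$-side. Given such $\bar H$, let $H < G$ be its preimage under $G \onto Q$; then $[G:H] = [Q:\bar H] < \infty$, and $K < H$, so that $(Y/K)/\bar H = Y/H$ as spaces. Therefore the term $\frac{b_{*}((Y/K)/\bar H; \F)}{[Q:\bar H]}$ equals $\frac{b_{*}(Y/H; \F)}{[G:H]}$, which is one of the terms in the infimum defining $b^{G}_{*}(Y; \DFG)$. Since the infimum on the $G$-side is taken over a larger collection of subgroups (all finite-index $H < G$, not merely those containing $K$), we get
\[
b^{G}_{*}(Y; \DFG) = \inf_{\substack{H <G \\ [G:H] < \infty}} \frac{b_{*}(Y/H; \F)}{[G:H]} \le \inf_{\substack{\bar H <Q \\ [Q:\bar H] < \infty}} \frac{b_{*}((Y/K)/\bar H; \F)}{[Q:\bar H]} = b^{Q}_{*}(Y/K; \DQ),
\]
which is the desired inequality.

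The only point that needs a little care — and the spot I would expect to be the main (minor) obstacle — is verifying the hypotheses of Theorem~\ref{t:binf} for the $Q$-action on $Y/K$: namely that $Y/K$ is a \emph{free} cocompact $Q$-complex. Freeness of the $G$-action on $Y$ forces $K$ to act freely, so $Y/K$ is a genuine CW-complex on which $Q = G/K$ acts, and this action is free because a nontrivial element of $Q$ fixing a cell of $Y/K$ would lift to an element of $G \setminus K$ stabilizing a cell of $Y$; cocompactness is immediate since $Y/G = (Y/K)/Q$. With that in hand the argument above is just the monotonicity of an infimum under restricting the index set, so no further work is required.
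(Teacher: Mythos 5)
Your proposal is correct and is essentially the paper's own proof: the paper likewise applies Theorem~\ref{t:binf} to both sides, identifies finite-index subgroups of $Q$ with finite-index subgroups of $G$ containing $K$ (with $(Y/K)/\bar H = Y/H$), and concludes by monotonicity of the infimum under enlarging the index set. Your extra verification that $Y/K$ is a free cocompact $Q$-complex is a fine (and harmless) addition that the paper leaves implicit.
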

(Note that $b^{Q}_{*}(Y/K ; \DQ)= b^{G}_{*}(Y; \DQ) $ where the later is computed using $\F G \to \F Q \to \DQ$, so the inequality is a special case of universality.)
\begin{proof}
    By Theorem \ref{t:binf},
    \[
    b^{Q}_{*}(Y/K ; \DQ) = \inf_{\substack{ K < H <G \\
    [G:H] < \infty}} \frac{b_{*}(Y/H; \F)}{[G:H]} \geq \inf_{\substack{H <G \\
    [G:H] < \infty}} \frac{b_{*}(Y/H; \F)}{[G:H]} = b^{G}_{*}(Y; \DFG).
    \]
\end{proof}

In fact, for $\F=\Fp$ much smaller collections of subgroups suffice for the conclusion of Theorem~\ref{t:binf}.
A version of the following theorem (phrased in terms of residual chains of subgroups) was explained to us by Jaikin-Zapirain.
Let $\mathcal{P}(G)$ be the poset of finite index normal subgroups $H < G$ with $G/H$ a $p$-group, ordered by inverse inclusion.
\begin{Theorem}\label{t:bpinf}
    Let $G$ be a residually torsion-free nilpotent group and let $Y$ be a free cocompact $G$-complex.
    Then for any cofinal subset $\mathcal{Q}$ of $\mathcal{P}(G)$,
    \[
    b^{G}_{\ast}(Y; \cD_{\F_pG}) = \inf_{H \in \mathcal{Q}} \frac{b_{\ast}(Y/H; \F_p)}{[G:H]}.
    \]
\end{Theorem}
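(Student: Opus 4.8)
\emph{Overall plan.} I would prove Theorem~\ref{t:bpinf} in two steps: (a) an elementary monotonicity argument reducing an arbitrary cofinal $\mathcal{Q}$ to the case $\mathcal{Q}=\mathcal{P}(G)$, and (b) a mod‑$p$ refinement of Theorem~\ref{t:binf} saying that the finite $p$‑quotients $G/H$, $H\in\mathcal{P}(G)$, already compute $b^{G}_{\ast}(Y;\cD_{\Fp G})$. Throughout put $\beta_{k}(H):=\frac{b_{k}(Y/H;\Fp)}{[G:H]}$ for a finite index $H<G$ and write $\partial^{(H)}_{\ast}$ for the boundary operator of $C_{\ast}(Y/H;\Fp)$. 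The inequality $b^{G}_{k}(Y;\cD_{\Fp G})\le\inf_{H\in\mathcal{Q}}\beta_{k}(H)$ is immediate from Theorem~\ref{t:binf}, since $\mathcal{Q}$ is a subfamily of the family of all finite index subgroups of $G$; everything below is devoted to the reverse inequality.

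\emph{Step (a): monotonicity and reduction to $\mathcal{P}(G)$.} The key point is: if $H'\nsgp H$ are finite index subgroups of $G$ with $H/H'$ a $p$‑group, then $\beta_{k}(H')\le\beta_{k}(H)$ for every $k$. Indeed, $\Fp[H/H']$ is a local ring with residue field $\Fp$ (as $H/H'$ is a finite $p$‑group) of $\Fp$‑dimension $[H:H']$, so any finitely generated $\Fp[H/H']$‑module $X$ is generated by $\dim_{\Fp}(X\otimes_{\Fp[H/H']}\Fp)$ elements (Nakayama), whence $\dim_{\Fp}X\le[H:H']\dim_{\Fp}(X\otimes_{\Fp[H/H']}\Fp)$. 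Apply this with $X=\operatorname{coker}\partial^{(H')}_{j}$, finitely generated over $\Fp[H/H']$ because $Y$ is cocompact; right exactness of $-\otimes_{\Fp[H/H']}\Fp$ together with the identification $C_{\ast}(Y/H';\Fp)\otimes_{\Fp[H/H']}\Fp=C_{\ast}(Y/H;\Fp)$ gives $X\otimes_{\Fp[H/H']}\Fp=\operatorname{coker}\partial^{(H)}_{j}$. Comparing $\Fp$‑dimensions and using $\dim_{\Fp}C_{j-1}(Y/H';\Fp)=[H:H']\dim_{\Fp}C_{j-1}(Y/H;\Fp)$ yields $\operatorname{rank}_{\Fp}\partial^{(H')}_{j}\ge[H:H']\operatorname{rank}_{\Fp}\partial^{(H)}_{j}$ for all $j$, i.e.\ the normalized rank $\frac{1}{[G:H]}\operatorname{rank}_{\Fp}\partial^{(H)}_{j}$ does not decrease as $H$ descends through a $p$‑group tower. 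Since $\beta_{k}(H)=\dim_{\Fp}C_{k}(Y/G;\Fp)-\frac{1}{[G:H]}\operatorname{rank}_{\Fp}\partial^{(H)}_{k}-\frac{1}{[G:H]}\operatorname{rank}_{\Fp}\partial^{(H)}_{k+1}$, this gives $\beta_{k}(H')\le\beta_{k}(H)$. Now $\mathcal{P}(G)$ is downward directed ($H_{1}\cap H_{2}\in\mathcal{P}(G)$, as $G/(H_{1}\cap H_{2})\into G/H_{1}\times G/H_{2}$ is a $p$‑group), so for cofinal $\mathcal{Q}\subseteq\mathcal{P}(G)$ and any $H\in\mathcal{P}(G)$ there is $Q\in\mathcal{Q}$ with $Q\subseteq H$; then $Q\nsgp H$ with $H/Q$ a $p$‑group, so $\beta_{k}(Q)\le\beta_{k}(H)$, and therefore $\inf_{\mathcal{Q}}\beta_{k}=\inf_{\mathcal{P}(G)}\beta_{k}$.

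\emph{Step (b): the $p$‑adic approximation, and where the work is.} It remains to prove $\inf_{H\in\mathcal{P}(G)}\beta_{k}(H)\le b^{G}_{k}(Y;\cD_{\Fp G})$, and this is the part I expect to be the main obstacle: it does \emph{not} follow formally from Theorem~\ref{t:binf}, because a finite index subgroup of $G$ whose index is divisible by a prime $\ne p$ need not contain any member of $\mathcal{P}(G)$, so the monotonicity of step~(a) cannot be used to compare it with $\mathcal{P}(G)$. Instead one must rerun the approximation argument underlying Theorem~\ref{t:binf} — the method of Linnell–Lück–Sauer \cite{lls11} as applied in \cite{aos24} — keeping only the finite $p$‑quotients. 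Since $G$ is residually torsion‑free nilpotent it is residually a finite $p$‑group, and by Lemma~\ref{l:agra}\eqref{i:contain} we may compute $b^{G}_{\ast}(Y;\cD_{\Fp G})$ inside a Malcev–Neumann skew field $MN_{<}(G)$ for a bi‑ordering of $G$ adapted to its residually torsion‑free nilpotent structure; the crux is to show that this bi‑ordering is compatible with the resulting pro‑$p$ filtration, so that the $MN_{<}(G)$‑rank of any matrix over $\Fp G$ is the supremum, over $H\in\mathcal{P}(G)$, of its normalized $\Fp[G/H]$‑rank. Applied to the boundary maps of $C_{\ast}(Y;\Fp)$ (again via $b_{k}=\dim C_{k}(Y/G;\Fp)\,[G:H]-\operatorname{rank}\partial^{(H)}_{k}-\operatorname{rank}\partial^{(H)}_{k+1}$) this gives $\inf_{H\in\mathcal{P}(G)}\beta_{k}(H)=b^{G}_{k}(Y;\cD_{\Fp G})$, which together with step~(a) completes the proof. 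This $p$‑adic rank approximation — the argument communicated to the authors by Jaikin‑Zapirain — is the substantive ingredient; step~(a) is exactly what promotes it from $\mathcal{P}(G)$ to an arbitrary cofinal subfamily.
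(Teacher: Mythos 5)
Your step (a) is fine: the Nakayama argument over the local ring $\Fp[H/H']$ is a correct, self-contained proof of the monotonicity that the paper simply cites as \cite{blls14}*{Theorem 1.6}, and together with directedness of $\mathcal{P}(G)$ it correctly reduces an arbitrary cofinal $\mathcal{Q}$ to $\mathcal{Q}=\mathcal{P}(G)$. The upper bound $b^{G}_{\ast}(Y;\cD_{\Fp G})\le\inf_{\mathcal{P}(G)}$ via Theorem~\ref{t:binf} is also as in the paper.

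The genuine gap is step (b), which is the heart of the theorem and which you do not actually prove: you pose as "the crux" the claim that the Malcev--Neumann rank of a matrix over $\Fp G$ is the supremum of its normalized ranks over the finite $p$-quotients $G/H$, $H\in\mathcal{P}(G)$, and propose to get it by "rerunning" the Linnell--L\"uck--Sauer argument with a bi-ordering "compatible with the pro-$p$ filtration." This does not work as stated: \cite{lls11} is an \emph{amenable} approximation theorem, and $G$ itself is in general far from amenable (residually torsion-free nilpotent groups include free groups and RAAG subgroups), so there is no version of that argument to rerun directly over $G$; your crux claim is essentially the mod-$p$ approximation statement for $G$ along pro-$p$ chains, which is exactly what needs proof and for which "compatibility of the bi-ordering with the filtration" is not an argument. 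The paper's route is different and is the missing idea: from the proof of Theorem~\ref{t:binf} in \cite{aos24} one has $b^{G}_{\ast}(Y;\cD_{\Fp G})=b^{N}_{\ast}(Y/K;\cD_{\Fp N})$ for a suitable torsion-free nilpotent quotient $K\to G\to N$; since $N$ is amenable, the main theorem of \cite{lls11} gives $b^{N}_{\ast}(Y/K;\cD_{\Fp N})=\lim_i b_{\ast}(Y/H_i;\Fp)/[G:H_i]$ for any nested chain of finite index normal subgroups $H_i$ with $\bigcap H_i=K$, and because finitely generated torsion-free nilpotent groups are residually $p$-finite this chain can be chosen inside $\mathcal{P}(G)$. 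That realizes the value $b^{G}_{\ast}(Y;\cD_{\Fp G})$ as a limit over $\mathcal{P}(G)$, giving $\inf_{\mathcal{P}(G)}\le b^{G}_{\ast}(Y;\cD_{\Fp G})$. Without this reduction to a nilpotent quotient (or some substitute for it), your step (b) remains an unproved assertion, so the proposal as written does not establish the theorem.
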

\begin{proof}
    In \cite{aos24} we first used the fact that $b^{G}_{*}(Y; \cD_{\F_pG}) = b^{N}_{*}(Y/K; \cD_{\F_pN})$ for some torsion-free nilpotent quotient $K \rightarrow G \rightarrow N$.
    Then the main theorem of \cite{lls11} implies
    \[
    b^{N}_{*}(Y/K; \cD_{\F_pN}) = \lim_{i \rightarrow \infty} \frac{b_\ast(Y/H_i; \F_p)}{[G:H_i]}
    \]
    for any nested sequence $H_i$ of finite index normal subgroups with $\medcap H_i = K$.
    Since torsion-free nilpotent groups are residually $p$-finite for any $p$, we can assume that $H_i \in \mathcal{P}(G)$.
    This implies the claim in the case $\mathcal{Q}=\mathcal{P}(G)$, since by Theorem~\ref{t:binf}, $b^{G}_{*}(Y; \cD_{\F_p G}) \le \frac{b_\ast(Y/H; \F_p)}{[G:H]}$ for any $H \in \mathcal{P}(G)$.

    Now, \cite{blls14}*{Theorem 1.6} implies that if $H' < H$ and $H, H' \in \mathcal{P}(G)$, then
    \[
    \frac{b_\ast(Y/H'; \F_p)}{[G:H']} \le \frac{b_\ast(Y/H; \F_p)}{[G:H]}.
    \]
    This monotonicity implies that restricting to a cofinal collection does not change the $\inf$.
\end{proof}

A useful corollary of Theorem \ref{t:bpinf} is a K\"unneth formula for $\F$-$L^{2}$-Betti numbers, at least for virtually residually torsion-free nilpotent $G$.
\begin{kunneth}\label{l:kunneth}
    Suppose $Y_i$ is a proper cocompact $G_i$-complex for $i = 1,2$ with $G_i$ virtually residually torsion-free nilpotent.
    Then
    \[
    b^{(2)}_n(Y_1 \times Y_2; \F) = \sum_{i + j = n} b_i^{(2)}(Y_1; \F ) b_j^{(2)}(Y_2,\F) .
    \]
\end{kunneth}
\begin{proof}
    For $L^2$-Betti numbers, the K\"unneth formula holds generally, see \cite{l02}*{Theorem 1.3.5}, so suppose $\F=\Fp$.
    Since the index is multiplicative for direct products, we can assume that $G_{i}$ are residually torsion-free nilpotent.
    Then, since $\mathcal{P}(G_1) \times \mathcal{P}(G_2)$ is cofinal in $\mathcal{P}(G_1 \times G_2)$, the usual K\"unneth formula for appropriate $Y_1/H_1 \times Y_2/H_2$ and Theorem \ref{t:bpinf} imply the formula.
\end{proof}

\section{Right-angled Coxeter groups}\label{s:racg}

Suppose $L$ is a flag complex with vertex set $S$.
Associated to $L$ there is a right-angled Coxeter group (abbreviated RACG), $W_L$, defined by a presentation as follows.
A set of generators for $W_L$ is $S$; and there are relations $s^2=1$ for all $s\in S$, and $[s, t] = 1$ whenever $\{s,t\}$ span an edge in $L$.
We say that $L$ is the nerve of $W_L$.

We will let $C_L$ denote the commutator subgroup $[W_L, W_L]$.
It is the kernel of the obvious homomorphism $W_L \rightarrow (\zz/2)^{\abs{L^{0} }}$; thus it is finite index in $W_L$.
It also happens to be torsion-free, and in fact a subgroup of the corresponding right-angled Artin group $A_L$.
Indeed, given $W_L$, Davis and Januszkiewicz \cite{dj00} showed that $W_L$ is a subgroup of a larger right-angled Coxeter group $W_{\Delta L}$ which has $A_L$ as a subgroup of index $2^{\abs{L}}$, and in fact, $C_L = W_L \cap A_L$ inside of $W_{\Delta L}$ \cite{los12}*{Proposition 5}.
Right-angled Artin groups and their subgroups are residually torsion-free nilpotent, so the results in Section \ref{s:skew} apply to virtually special groups, and in particular to RACG's.
\begin{definition}
    A \emph{mirror structure} on a CW-complex $X$ consists of an index set $S$ and a family of subcomplexes $\{X_s\}_{s \in S}$.
    The subcomplexes $X_s$ are the \emph{mirrors} of $X$.
    We say $X$ is a mirrored space over $S$.
\end{definition}

Let $X$ be a mirrored space over $S$, and let $W$ be a RACG with vertex set $S$.
Given $x$, let $S(x) := \{s \in S\mid x \in X_s\}$.
Given any Coxeter group generated by $S$, let $\cu(W, X )$ be the \emph{basic construction} associated to this data:
\[
\cu(W, X) := (W \times X) /\sim,
\]
where $(w,x) \sim (w',x')$ if and only if $x = x'$ and $wW_{S(x)} = w'W_{S(x)}$.

For right-angled Coxeter groups there is a natural choice of a mirrored space $X$.
Given a flag complex $L$, let $K_L$ be the geometric realization of the poset of simplices of $L$.
Then $K_L$ is isomorphic to the cone on the barycentric subdivision of $L$, with the empty simplex corresponding to the cone point.
There is a canonical mirror structure on $K_L$ where the $s$-mirror $K_s$ is the geometric realization of the subposet of simplices containing the vertex $s$.
This is isomorphic to the star of $s$ in the barycentric subdivision of $L$.
The \emph{Davis complex} $\Sigma_L$ is the basic construction $\cu(W_L, K_L)$.

The Davis complex admits a natural cubical structure.
Firstly, $K_L$ can be naturally identified with a CAT(0) cubical subcomplex of $[0,1]^{S}$; $cL$ is precisely the union of subcubes of $[0,1]^{S}$ containing $(0,0, \dots, 0)$ corresponding to the collections of vertices which span simplices in $L$.
Note that the link of the vertex $(0,0,\dots,0)$ is isomorphic to $L$, and the vertices of each cube can be identified with vertices of the barycentric subdivision of $L$.

For a vertex $s$ the mirror $K_{s}$ is the intersection of $K_L$ with the hyperplane $x_{s}=1$.
It is naturally isomorphic to the cubical complex $K_{\Lk s}$.
This cubical structure naturally extends to $\Sigma_L$.
\begin{figure}
    \centering
    \begin{tikzpicture}[scale = .7]
        \begin{scope}[xshift = -3cm, yshift = 0cm, scale = .8]
            \draw[fill=lightgray, opacity = .5] (0,0)--(4,0)--(4,4)--(2,4)--(2,2)--(0,2)--(0,0);
            \draw[thick] (0,0)--(4,0)--(4,4)--(2,4)--(2,2)--(0,2)--(0,0);
            \draw[thick] (2,0) -- (2,2) -- (4,2);
            \draw[dashed] (0,0) -- (0,4) -- (2,4);
            \node[above] at (3,4) {$K_s$};
            \node[left] at (0,1) {$K_t$};
            \node[below] at (2,0) {$K_u$};
            \node[right] at (4,2) {$K_v$};

            \node[below] at (2,-2) {$K_{L- e} \subset K_L$};
        \end{scope}
        \begin{scope}[xshift = -7cm, yshift = 1.6 cm, scale = 1.6]

            \draw[thick] (-1,0) coordinate (t) -- (0,-1) coordinate (u)-- (1,0) coordinate (v)-- (0,1) coordinate (s);

            \draw[dashed] (s) -- node[midway, above left] {$e$} (t);

            \node[below] at (0,-2) {$L- e \subset L$};
            \node[above] at (s) {$s$};
            \node[left] at (t) {$t$};
            \node[below] at (u) {$u$};
            \node[right] at (v) {$v$};
        \end{scope}
        \begin{scope}[xshift = 6cm, yshift = 2cm, scale = .7]

            \draw[fill=lightgray, opacity = .5] (-1,-2)--(0,-2)--(0,-4)--(1,-4)--(1,4)--(-1,4)--(-1,2)-- (0,2) -- (0,0) -- (-1,0) -- (-1,-2);
            \draw[fill=lightgray, opacity = .5] (-2,-4) -- (-2,4) -- (-3,4) -- (-3,-4) -- (-2,-4);
            \draw[fill=lightgray, opacity = .5] (-1,4) -- (-2,4) -- (-2,2) -- (-1,2) -- (-1,4);
            \draw[fill=lightgray, opacity = .5] (-1, 0) -- (-2,0) -- (-2,-2) -- (-1,-2) -- (-1,0);
            \draw[fill=lightgray, opacity = .5] (4, 4) -- (1,4) -- (1,2) -- (4,2) -- (4,4);
            \draw[fill=lightgray, opacity = .5] (1, 2) -- (2,2) -- (2,-4) -- (1,-4) -- (1,2);
            \draw[fill=lightgray, opacity = .5] (2, 0) -- (4,0) -- (4,-2) -- (2,-2) -- (2,0);
            \draw[fill=lightgray, opacity = .5] (4, 4) -- (5,4) -- (5,-4) -- (4,-4) -- (4,4);
            \draw[fill=lightgray, opacity = .5] (-3, -5) -- (5,-5) -- (5,-4) -- (-3,-4) -- (-3,-5);
            \draw[thick] (1,1) -- (1, -1) -- (-1,-1) -- (-1,0) -- (-0,0) -- (0,1) -- (1,1);
            \draw[step=1cm] (-3,-5) grid (5,4);

            \node[right] at (5.5,0) {$\dots $};
            \node[left] at (-3.5,0) {$\dots $};
            \node[left] at (-3.5,4.5) {$\ddots $};
            \node[] at (5.5,4.5) {$\Ddots $};
            \node[right] at (5.5,-5.5) {$\ddots $};

            \node[above] at (1,4.5) {$\vdots $};
            \node[below] at (1,-5.5) {$\vdots $};
            \node[] at (-3.5,-5.5) {$\Ddots $};

            \node at (1,-8) {$W_L K_{L - e} \subset \Sigma_L \cong \rr^2$};
        \end{scope}
    \end{tikzpicture}

    \caption{If $L$ is a $4$-cycle, then $W_L \cong D_\infty \times D_\infty$ and $\Sigma_L$ is the standardly cellulated $\rr^2$.
    Removing an edge from $L$ corresponds to removing a square from $K_L$.
    Reflecting this around gives $\rr^2$ with a $W_L$-orbit of squares deleted.}
\end{figure}
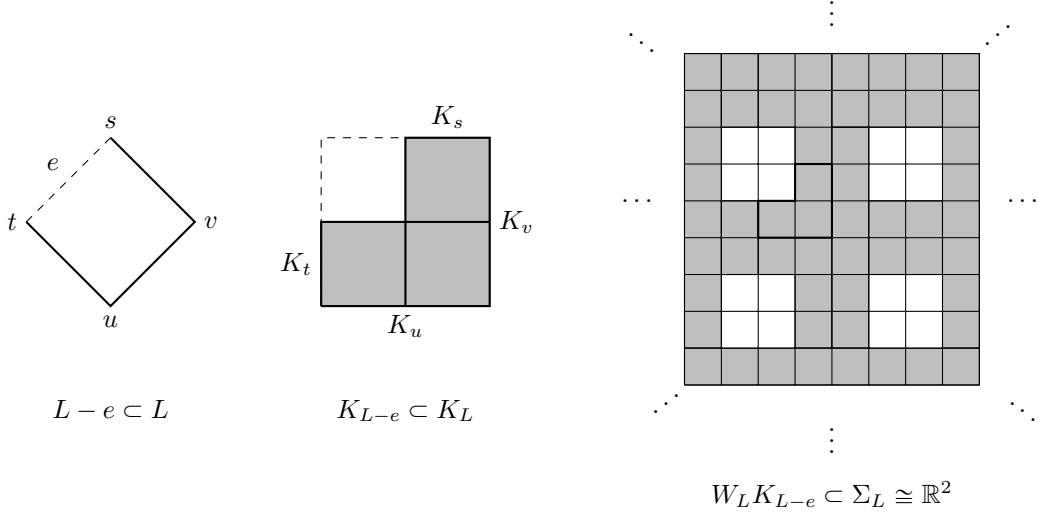

\subsection{Flag subcomplexes}

Given a flag subcomplex $A$ of $L$, there are canonical inclusions $K_{A} \into K_{L}$ and $P_{A} \into P_{L}$ and a homomorphism $\phi_{AL}: W_{A} \to W_{L}$ induced by the inclusion of the generating set of $A$ into the generating set of $W_{L}$.
The inclusion $K_{A} \into K_{L}$ and $\phi_{AL}$ induce a natural map $\gS_A \to \gS_L$.
Since $\phi_{AL}$ induces an injective map on abelianizations $(\zz/2)^{\abs{A^{0}}} \rightarrow (\zz/2)^{\abs{L^{0}}}$, the kernel of $\phi$ is contained in $C_{A}$.
Therefore, it restricts to a homomorphism $C_{A} \to C_{L}$, and $\gS_A \to \gS_L$ is the map of universal covers induced by the inclusion $P_{A} \into P_{L}$.
Note that the image of $\gS_A$ in $ \gS_L$ is $\gS_A/\ker \phi_{AL}$, and the union of $W_{L}$-translates of this image is the complex $W_{L}K_{A}$.
We now describe the relationship between $\gS_{A}$ and this complex.

Let $\bar A$ denote the full subcomplex of $L$ generated by the vertices of $A$.
The inclusion $A \into L$ factors through $\bar A$, $A \into \bar A \into L$, and we have $\phi_{A L}=\phi_{\bar A L}\circ \phi_{A \bar A}$.
Since $\bar A$ is a full subcomplex of $L$, $\phi_{\bar A L}$ is injective and $\gS_{\bar A} \to \gS_L$ is an embedding.
If $w \in W_L - W_{\bar A}$, then $\Sigma_{\bar A}$ and $w\Sigma_{\bar A}$ are disjoint; hence the stabilizer of $\Sigma_{\bar A}$ is $W_{\bar A}$ so we get
\[
W_{L} K_{\bar A} =W_{L}\times_{W_{\bar A}} \gS_{\bar A}/\ker \phi_{\bar A L} .
\]
Now, since $A^{0}=\bar A^{0}$, the map $\phi_{A \bar A} $ is onto.
Hence, the image of $\gS_{A}$ in $\gS_{\bar A}$ is $W_{\bar A}$-invariant, and therefore its stabilizer in $W_{L}$ is still $W_{\bar A}$, and elements of $W_L - W_{\bar A}$ move it off itself.
Therefore, since $\ker \phi_{A \bar A}= \ker \phi_{A L}$, we have
\[
W_{L} K_A =W_{L} \gS_{A}/\ker \phi_{A L} =W_{L}\times_{W_{\bar A}} \gS_{A}/\ker \phi_{A L} .
\]
Note that, since $\phi_{A \bar A} $ restricts to an onto map $C_{A} \to C_{\bar A}$, $W_{\bar A} K_A $ is an intermediate cover of $P_{A}$.

As a simple example consider the inclusion of a flag $A$ into the simplex $\gD$ on the same vertices.
Then $\ker \phi_{A \gD}=C_{A}$ and $W_{\gD} K_A =P_{A}$ is a subcomplex of the cube $\gS_{\gD}$.
\begin{notation}
    If $A$ is just a subset of simplices of $L$, we denote by $L \smallsetminus A$ the set of simplices of $L$ not contained in $A$, and we denote by $L - A $ the set of simplices which do not contain any simplex of $A$.
    In particular, if $A$ is a subcomplex of $L$, then $L - A$ is the full subcomplex of $L$ spanned by vertices in $L \smallsetminus A$.
\end{notation}

\subsection{Mayer--Vietoris arguments}

We now describe the effect of vertex and edge removal on $\F$-$L^{2}$-Betti numbers of the Davis complex $\gS_{L}$ with respect to the $W_{L}$-action.
By abusing terminology, we will sometimes say a flag complex $L$ is $\F$-$L^{2}$-acyclic if $b_*^{(2)}(W_{L}; \F)=0$.
\begin{lemma}
    [cf.
    \cite{aos21}*{Lemma 11}]\label{l:v-remove}
    If $L$ is a flag complex and $v \in L$ is vertex with $b_{i}^{(2)}(W_{\Lk v}; \F) =0$, then
    \[
    b_i^{(2)}(W_{L-v}; \F) \leq b_i^{(2)}(W_{L}; \F),
    \]
    and, if in addition, $ b_{i-1}^{(2)}(W_{\Lk v}; \F) = 0$, then
    \[
    b_i^{(2)}(W_{L-v}; \F) = b_i^{(2)}(W_{L}; \F).
    \]
\end{lemma}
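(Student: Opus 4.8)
The plan is to cut $\Sigma_L$ along the $W_L$-translates of the star of $v$ and run Mayer--Vietoris, using Lemma~\ref{l:ind} and the K\"unneth formula to evaluate the pieces.

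\emph{Setting up the decomposition.} Since $L$ is flag, $\St v=\{v\}*\Lk v$ is the full subcomplex of $L$ spanned by $v$ and the vertices of $\Lk v$, the complex $\Lk v$ is full, and $L-v$ is full by definition; moreover $L=(L-v)\cup\St v$ with $(L-v)\cap\St v=\Lk v$. Passing to posets of simplices gives $K_L=K_{L-v}\cup K_{\St v}$ with $K_{L-v}\cap K_{\St v}=K_{\Lk v}$, and applying the basic construction gives $\Sigma_L=\cu(W_L,K_{L-v})\cup\cu(W_L,K_{\St v})$ with intersection $\cu(W_L,K_{\Lk v})$. Because $L-v$, $\St v$ and $\Lk v$ are full in $L$, the identification $W_LK_A=W_L\times_{W_A}\Sigma_A$ established in the discussion of flag subcomplexes shows that, as $W_L$-complexes, these three pieces are $Y_1:=W_L\times_{W_{L-v}}\Sigma_{L-v}$, $Y_2:=W_L\times_{W_{\St v}}\Sigma_{\St v}$, and $Y_1\cap Y_2=W_L\times_{W_{\Lk v}}\Sigma_{\Lk v}$.

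\emph{Evaluating the pieces.} By Lemma~\ref{l:ind} (with $G=W_L$ and $K$ one of $W_{L-v}$, $W_{\St v}$, $W_{\Lk v}$), the $\F$-$L^2$-Betti numbers of $Y_1$, $Y_2$, $Y_1\cap Y_2$ with respect to the $W_L$-action equal $b^{(2)}_*(W_{L-v};\F)$, $b^{(2)}_*(W_{\St v};\F)$ and $b^{(2)}_*(W_{\Lk v};\F)$ respectively, the last two since $\Sigma_{\St v}$ and $\Sigma_{\Lk v}$ are contractible. Since $\St v$ is a join, $W_{\St v}\cong\zz/2\times W_{\Lk v}$ and $\Sigma_{\St v}\cong\Sigma_{\{v\}}\times\Sigma_{\Lk v}$; as $b^{(2)}_*(\zz/2;\F)=(\tfrac12,0,0,\dots)$ (a finite group of order $2$ acting on a contractible complex), the K\"unneth formula for $\F$-$L^2$-Betti numbers gives $b^{(2)}_i(W_{\St v};\F)=\tfrac12\,b^{(2)}_i(W_{\Lk v};\F)$. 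In particular the hypothesis $b^{(2)}_i(W_{\Lk v};\F)=0$ forces $b^{(2)}_i(Y_2;\F)=0$ as well as $b^{(2)}_i(Y_1\cap Y_2;\F)=0$, and likewise in degree $i-1$ under the additional hypothesis.

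\emph{Mayer--Vietoris.} Fix a torsion-free finite-index subgroup $H<W_L$ for which $\DFH$ exists (e.g.\ $H=C_L$, which is residually torsion-free nilpotent); then $\Sigma_L$, $Y_1$, $Y_2$, $Y_1\cap Y_2$ are free cocompact $H$-complexes, so every term in the Mayer--Vietoris sequence of Lemma~\ref{l:agra}(2) in $H^H_*(-;\DFH)$ is finite-dimensional over the skew field $\DFH$, and dividing $\DFH$-dimensions by $[W_L:H]$ converts exactness into relations among the quantities above. If $b^{(2)}_i(W_{\Lk v};\F)=0$, then $H^H_i(Y_1\cap Y_2;\DFH)=0=H^H_i(Y_2;\DFH)$, so the sequence contains $0\to H^H_i(Y_1;\DFH)\to H^H_i(\Sigma_L;\DFH)$ (the map induced by inclusion), whence $b^{(2)}_i(W_{L-v};\F)\le b^{(2)}_i(W_L;\F)$. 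If in addition $b^{(2)}_{i-1}(W_{\Lk v};\F)=0$, then $H^H_{i-1}(Y_1\cap Y_2;\DFH)=0$ as well, so $0\to H^H_i(Y_1;\DFH)\to H^H_i(\Sigma_L;\DFH)\to 0$ is exact and the inequality becomes an equality.

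\emph{Main obstacle.} The only genuinely delicate point is the first step: checking that the basic-construction decomposition has intersection \emph{precisely} $\cu(W_L,K_{\Lk v})$ and that each piece is honestly the induced complex $W_L\times_{W_A}\Sigma_A$, which rests on the fullness of $L-v$, $\St v$, $\Lk v$ in the flag complex $L$ together with the flag-subcomplex machinery. Everything afterward is routine bookkeeping with Lemmas~\ref{l:agra} and~\ref{l:ind} and the K\"unneth formula, the one point requiring care being to run Mayer--Vietoris over a torsion-free finite-index subgroup so that $\DFH$-dimensions are defined and additive along exact sequences.
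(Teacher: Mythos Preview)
Your proof is correct and follows essentially the same route as the paper: both use the decomposition $L=\St v\cup_{\Lk v}(L-v)$ into full subcomplexes to induce a decomposition of $\Sigma_L$ into induced complexes $W_L\times_{W_A}\Sigma_A$, then run Mayer--Vietoris with $\DCL$-coefficients, invoking Lemma~\ref{l:ind} and the product splitting $W_{\St v}\cong W_{\Lk v}\times\zz/2$ to identify the pieces. Your write-up is slightly more explicit about passing to a torsion-free finite-index subgroup and about finite-dimensionality, but the argument is the same.
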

\begin{proof}
    If $v$ is a vertex of a flag complex $L$, then the star $\St v$, the link $\Lk v$, and $L - v = L \smallsetminus \oSt v$ are all full subcomplexes of $L$.
    The decomposition
    \[
    L= \St_{v} \medcup_{\Lk v} ( L-v)
    \]
    induces a decomposition
    \[
    \Sigma_L = W_L\Sigma_{\St v} \medcup_{W_L\Sigma_{\Lk v}} W_L\Sigma_{L-v},
    \]

    Since all subcomplexes are full, we have a decomposition:
    \[
    \Sigma_L = W_L \times_{W_{\St v}} \Sigma_{\St v} \medcup_{W_L\times_{W_{\Lk v}} \Sigma_{\Lk v}} W_L \times_{W_{L-v}} \Sigma_{L-v},
    \]
    hence a Mayer--Vietoris sequence with $\DCL$-coefficients:
    \[
    \dots \to H_{i+1}^{C_{L}}(W_L\times_{W_{\Lk v}} \gS_{\Lk v}) \to H_{i}^{C_{L}}(W_L \times_{W_{\St v}} \gS_{\St v}) \oplus H_{i}^{C_{L}}(W_L \times_{W_{L-v}} \Sigma_{L-v}) \to H_{i}^{C_{L}}(\gS_{L}) \to \dots
    \]
    Since the star is the cone on the link, its Davis complex and the associated RACG split as products
    \[
    \Sigma_{\St v} \cong \Sigma_{\Lk v} \times [0,1], \qquad W_{\St v} \cong W_{\Lk v} \times \zz/2.
    \]
    Hence $b_i^{(2)}(W_{\St v}; \F) = \frac{1}{2} b_i^{(2)}(W_{\Lk v}; \F)$, and the assumptions and Lemma~\ref{l:ind} imply vanishing of the corresponding terms in the sequence, which implies the inequalities.
\end{proof}

The corresponding statement for the removal of edges is our main technical result in this paper.
Here, we treat edges as single simplices of $L$, not subcomplexes.
\begin{Theorem}\label{t:subdivideedge}
    Suppose we have a flag complex $L$ and an edge $e$ whose link $\Lk e$ has $b_{i-1}^{(2)}(W_{\Lk e}; \F) = 0$ for some $i$.
    Then
    \[
    b_i^{(2)}(W_{L - e}; \F) \le b_i^{(2)}(W_{L}; \F).
    \]
    In particular, if $b_i^{(2)}(W_{L}; \F) = 0$ then
    \[
    b_i^{(2)}(W_{L - e}; \F) = 0.
    \]
\end{Theorem}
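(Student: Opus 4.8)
The plan is to mimic the vertex-removal argument of Lemma~\ref{l:v-remove}, but accounting for the fact that $L-e$ (where $e$ is treated as a single simplex) is \emph{not} a full subcomplex of $L$, so that the homomorphism $W_{L-e}\to W_L$ need not be injective and the lifts of $P_{L-e}$ are intermediate covers rather than copies of $\Sigma_{L-e}$. Concretely, I would first replace $e$ by its subdivision: let $L'$ be the flag complex obtained from $L$ by subdividing the edge $e$ with a new midpoint vertex $m$. Then $\Lk_{L'} m$ is the suspension $\{m_0,m_1\}*\Lk_L e$ of the link of $e$ (the two endpoints of $e$ become the two suspension points, which are no longer joined), so $W_{\Lk_{L'}m}\cong (\zz/2\times\zz/2)\times W_{\Lk_L e}$ and $\Sigma_{\Lk_{L'}m}\cong \rr\times\Sigma_{\Lk_L e}$. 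By the K\"unneth formula, $b_j^{(2)}(W_{\Lk_{L'}m};\F)=0$ for \emph{all} $j$ whenever $\Sigma_{\Lk_L e}$ is $\F$-$L^2$-acyclic; but here I only have vanishing in degree $i-1$, so I get $b_{i-1}^{(2)}(W_{\Lk_{L'}m};\F)=0$ and $b_{i}^{(2)}(W_{\Lk_{L'}m};\F)=0$ automatically from the $\rr$-factor contributing a shift — wait, more carefully: the $\rr$-factor has $b_0=b_1=0$ in the $L^2$ sense, so $b_j^{(2)}(W_{\Lk_{L'}m};\F)=\sum b_{j}^{(2)}(\rr)\cdot b_{\ast}^{(2)}(W_{\Lk_L e};\F)=0$ for every $j$. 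Thus Lemma~\ref{l:v-remove} applies at the vertex $m$ of $L'$ in every degree, giving $b_j^{(2)}(W_{L'-m};\F)=b_j^{(2)}(W_{L'};\F)$ for all $j$. Since $L'-m = L-e$ (removing the midpoint's open star from the subdivided complex recovers exactly the complex with the open star of $e$ deleted), this reduces the theorem to comparing $b_i^{(2)}(W_{L'};\F)$ with $b_i^{(2)}(W_L;\F)$.

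The remaining and genuinely new step is the comparison between $L$ and its edge-subdivision $L'$. Here I would set up a Mayer--Vietoris decomposition of $\Sigma_L$ realizing $\Sigma_L$ as glued from (translates of) $\Sigma_{L-e}$-type pieces and a piece supported near $e$, and identify the piece near $e$ with something built from $\Sigma_{\St e}\cong \Sigma_{\Lk e}\times[0,1]^2$ (the star of the edge $e$ is the join of $e$ with $\Lk e$, so $W_{\St e}\cong (\zz/2)^2\times W_{\Lk e}$). The point is that subdividing $e$ does not change $W_L$ up to the relevant $L^2$-data: one shows directly, via a Mayer--Vietoris sequence with $\cD_{\F C_L}$-coefficients comparing $\Sigma_L$ and $\Sigma_{L'}$, that the difference is governed by Davis complexes of joins involving $\Lk e$, whose $\F$-$L^2$-Betti numbers vanish in the degrees that matter because of the hypothesis $b_{i-1}^{(2)}(W_{\Lk e};\F)=0$ together with Lemma~\ref{l:ind} (to handle the induced complexes $W_L\times_{W_?}(-)$) and the K\"unneth formula (to handle the join/product factors). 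Running the long exact sequence then yields the inequality $b_i^{(2)}(W_{L-e};\F)=b_i^{(2)}(W_{L'};\F)\le b_i^{(2)}(W_L;\F)$, and the final clause ($b_i^{(2)}(W_L;\F)=0\Rightarrow b_i^{(2)}(W_{L-e};\F)=0$) is immediate.

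I expect the main obstacle to be the precise geometric bookkeeping of the Mayer--Vietoris decomposition relating $\Sigma_L$ and $\Sigma_{L'}$: because $L-e$ is not a full subcomplex of $L$, the relevant cover is the intermediate cover $W_{L}K_{L-e}=W_L\times_{W_{\overline{L-e}}}\Sigma_{L-e}/\ker\phi_{(L-e)L}$ described in the "Flag subcomplexes" section, and one must verify that this intermediate cover has the same $\cD_{\F C_L}$-Betti numbers as $\Sigma_{L-e}$ itself, or else phrase everything (as the introduction suggests) entirely on the level of the subdivided complex $L'$ where all subcomplexes in sight are full. Given the reduction in the first paragraph, the cleanest route is the latter: work with $L'$, where $L'-m$ and $\St_{L'}m$ and $\Lk_{L'}m$ are all full subcomplexes, apply Lemma~\ref{l:v-remove} verbatim, and separately carry out the $L\leftrightarrow L'$ comparison; the delicate part there is checking that the "extra" pieces introduced by the subdivision assemble into induced complexes over Davis groups of the form $W_{\Lk e}\times(\text{finite})$, so that Lemma~\ref{l:ind} and the K\"unneth formula kill them in degree $i$ using only the single vanishing hypothesis in degree $i-1$.
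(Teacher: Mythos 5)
Your first reduction is fine but circular in effect: identifying $L'-m=L-e$ and removing the midpoint $m$ (whose link is the suspension $S\Lk_L e$, hence $\F$-$L^2$-acyclic) is exactly how the paper later converts between edge removal and edge subdivision, so after this step you are left with precisely the original difficulty, namely comparing $b_i^{(2)}(W_{L'};\F)$ (equivalently $b_i^{(2)}(W_{L-e};\F)$) with $b_i^{(2)}(W_L;\F)$. For that comparison your proposal has a genuine gap. There is no ``Mayer--Vietoris sequence with $\cD_{\F C_L}$-coefficients comparing $\Sigma_L$ and $\Sigma_{L'}$'': $W_{L'}$ and $W_L$ are different groups with no homomorphism in a useful direction, so the two Davis complexes do not sit in a common equivariant exact sequence. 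The only decomposition available is inside $\Sigma_L$ itself, using the non-full subcomplex $K_{L-e}$; the paper takes the long exact sequence of the pair $(\gS_L, W_LK_{L-e})$, excises to $W_L\times_{W_{\St e}}(\gS_{\St e}, W_{\St e}K_{S\Lk e})\cong W_L\times_{W_{\St e}}\bigl(\gS_{\Lk e}\times([0,1]^2,\d[0,1]^2)\bigr)$, and uses the K\"unneth degree shift by $2$ together with Lemma~\ref{l:ind} and the hypothesis $b_{i-1}^{(2)}(W_{\Lk e};\F)=0$ to conclude $b_i^{C_L}(W_LK_{L-e};\DCL)\le b_i^{C_L}(\gS_L;\DCL)$. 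Up to bookkeeping this matches what you sketch.

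What is missing is the step you flag as a loose end but misdiagnose: the piece $W_LK_{L-e}$ is the intermediate cover $\gS_{L-e}/\ker\phi_{(L-e)L}$, and it is \emph{not} true in general (nor needed) that it has the same $\DCL$-Betti numbers as $\gS_{L-e}$. The essential new ingredient of the paper is the weak universality of the Linnell skew field (Lemma~\ref{l:universality}, resting on Theorem~\ref{t:binf}): since $\ker\phi_{(L-e)L}<C_{L-e}$ and these groups are residually torsion-free nilpotent, one gets the one-sided inequality $b_i^{C_{L-e}}(\gS_{L-e};\cD_{\F C_{L-e}})\le b_i^{C_L}(W_LK_{L-e};\DCL)$, which happens to point in the right direction and, after normalizing by the index, yields the theorem. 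Your alternative escape route --- ``phrase everything on the level of $L'$ where all subcomplexes are full'' --- does not avoid this, because fullness in $L'$ only handles the removal of $m$, not the comparison of $L'$ with $L$; that comparison is the content of the theorem and cannot be obtained by formal Mayer--Vietoris and K\"unneth arguments alone. Without invoking universality (or an equivalent input such as Theorem~\ref{t:binf}), the argument bounds only the Betti numbers of the intermediate cover, not those of $W_{L-e}$.
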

\begin{proof}
    Let $R = L - e$.
    Then we have $R = L \smallsetminus \oSt e$, and a decomposition
    \[
    L = \St e\medcup_{S\Lk e} R
    \]
    where the star $\St e$ is a full subcomplex, but the suspension of the link and complement are flag \emph{non-full} subcomplexes of $L$.
    This gives a decomposition $K_L = K_{\St e} \cup_{K_{S\Lk e}} K_{R}$, and hence a decomposition of $\Sigma_L$:
    \[
    \Sigma_L = W_{L}\Sigma_{\St e} \textstyle \bigcup\limits_{W_L K_{S\Lk e}} W_L K_{R}.
    \]
    Consider the long exact sequence of the pair $(\gS_{L}, W_L K_R ) = W_{L}(K_{L}, K_{R}) $.
    \[
    \dots \to H_{i+1}^{C_{L}}(W_{L} (K_{L},K_{R}); \DCL) \to H_{i}^{C_{L}}(W_{L}K_{R}; \DCL) \to H_{i}^{C_{L}}(\gS_{L} ; \DCL) \to \dots
    \]
    The pair $W_{L}(K_{L}, K_{R})$ excises to the pair $W_{L}(K_{\St e}, K_{S\Lk e})$, and the key observation is that since the star $\St e$ is full, this pair is just $W_L \times_{W_{\St e} }(\gS_{\St e}, W_{\St e} K_{S\Lk e} )$.
    Next, the splitting $\St e = \Lk e * e$ gives splittings
    \[
    K_{\St e} =K_{\Lk e} \times K_{e}, \quad K_{S\Lk e} = K_{\Lk e} \times K_{\d e}, \quad W_{\St e} =W_{\Lk e} \times W_{e}.
    \]
    Therefore, $(\Sigma_{\St e}, W_{\St e} K_{S\Lk e}) \cong \Sigma_{\Lk e} \times W_{e}(K_{e}, K_{\d e}) $.
    Now, since $W_{e}(K_{e}, K_{\d e}) \cong ([0,1]^2, \d [0,1]^2)$, we have that the pair $W_{L}(K_{L}, K_{R})$ excises to
    \[
    W_L \times_{W_{\St e} }\left(\Sigma_{\Lk e} \times ([0,1]^2, \d [0,1]^2 ) \right).
    \]

    Note that $C_{L}$ acts trivially on $[0,1]^2$, hence, by the K\"unneth formula $b_{i+1}^{C_{L}}(W_{L} (K_{L},K_{R}); \DCL)=b_{i-1}^{C_{L}}(W_L\times_{W_{\St_{e}}} \Sigma_{\Lk e} ; \DCL) $, which vanishes by Lemma~\ref{l:ind} and hypothesis, and therefore, from the long exact sequence, $b_{i}^{C_{L}}(W_L K_R; \DCL) \leq b_{i}^{C_{L}}(\gS_{L} ; \DCL) $.

    Furthermore, since $W_LK_R=\gS_{R}/\ker \phi_{RL}$ and $\ker \phi_{RL} <C_R$, Lemma~\ref{l:universality} implies $ b_{i}^{C_{R}}(\gS_{R} ; \cD_{\F C_{R}}) \leq b_{i}^{C_{L}}(W_LK_R; \cD_{\F C_L}) $.
    Thus $ b_{i}^{C_{R}}(\gS_{R} ; \cD_{\F C_{R}}) \leq b_{i}^{C_{L}}(\gS_{L} ; \DCL) $, and the theorem follows by normalizing by the index, since the commutator subgroups have the same index $[W_{R}: C_{R}]=[W_{L}: C_{L}]=2^{\abs{L^{(0)}}}$.
\end{proof}

\section{\texorpdfstring{$\F$-$L^2$}{F-L2}-homology of RACG's based on subdivisions}\label{s:main}

We now use the decompositions of $\Sigma_L$ in Lemma \ref{l:v-remove} and Theorem \ref{t:subdivideedge} to compute $b_\ast^{(2)}(W_L; \F)$ for certain flag complexes $L$ arising from subdivision procedures.

\subsection{Edge subdivisions}

If $L$ is a simplicial complex and $e$ is an edge of $L$, the edge subdivision of $L$ along $e$, denoted $\Sub_e(L)$, is obtained by replacing the open star of $e$ with the cone on the suspended link of $e$.
The suspension points correspond to the endpoints of $e$, and we think of the cone point as the new midpoint $v_e$ of $e$.
It is easy to see that this edge subdivision of a flag complex is also flag.

We say that $L'$ is an iterated edge subdivision of $L$ if $L'$ can be obtained from $L$ by a sequence of edge subdivisions (the subdivided edges in this sequence do not have to be original edges of $L$).
If $K$ is a subcomplex of $L$ and $K'$ is an iterated edge subdivision of $K$, then there is an induced iterated edge subdivision $L'$.
One annoyance is that in general $L'$ does not depend only on the subdivision $K'$ but also on the ordering of subdivided edges.
An easy case where this problem disappears is when $L = K \ast J$, since subdividing edges of $K$ maintains the join structure, so $L' = K' \ast J$.
This generalizes easily to when $K$ is a full subcomplex:
\begin{lemma}\label{l:iesind}
    Suppose that $K$ is a full subcomplex of $L$.
    Let $K'$ be an iterated edge subdivision of $K$, and $L'$ the induced iterated edge subdivision of $L$.
    Then $L'$ depends only on $K'$.
\end{lemma}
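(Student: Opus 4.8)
The plan is to induct on the number $m$ of edge subdivisions needed to pass from $K$ to $K'$, the case $m=0$ being trivial. For the inductive step, fix two sequences of edge subdivisions $\sigma=(e_1,\dots,e_m)$ and $\tau=(f_1,\dots,f_m)$ carrying $K$ to $K'$, with induced subdivisions $L'_\sigma,L'_\tau$ of $L$. If the two sequences subdivide the same first edge $e:=e_1=f_1$, then one passes to the pair $\Sub_e(K)\subseteq\Sub_e(L)$, applies the inductive hypothesis (now $K'$ is reached from $\Sub_e(K)$ in $m-1$ steps), and concludes $L'_\sigma=L'_\tau=L'$. So the whole problem is to arrange that, after a harmless reordering, $\tau$ begins with the same edge as $\sigma$.

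Two structural facts are needed. First, if $K$ is full in $L$ and $e$ is an edge of $K$, then $\Sub_e(K)$ is full in $\Sub_e(L)$: fullness gives $\Lk_K(e)=\Lk_L(e)\cap K$, the subdivision replaces the closed star $\St(e)=e\ast\Lk(e)$ by $(a-v_e-b)\ast\Lk(e)$ compatibly on the $K$- and $L$-sides, and one checks directly that the new pair is again full (so the inherited-fullness hypothesis survives the induction). Second, call two edges $g,h$ of a flag complex \emph{independent} if they are disjoint or if $g\cup h$ does not span a simplex; then $\Sub_g$ and $\Sub_h$ commute — when $g\cup h$ is not a simplex the two operations modify disjoint portions of the complex, and when $g,h$ are disjoint but $g\cup h$ is a simplex they act on separate join factors of $\St(g\cup h)=g\ast h\ast\Lk(g\cup h)$ — so consecutive independent steps in a subdivision sequence may be transposed without changing the resulting complex. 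The decisive use of fullness is this: for edges whose vertices all lie in the (iteratively subdivided) copy of $K$, ``$g\cup h$ spans a simplex of $L$'' is equivalent to ``$g\cup h$ spans a simplex of $K$''; hence a transposition of independent steps on the $K$-side is automatically a transposition of independent steps on the induced $L$-side, and therefore changes neither $K'$ nor $L'_\tau$.

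With these in hand the reduction goes as follows. An original edge of $K$ can only disappear from a complex by being subdivided itself, and $e=e_1$ is absent from $K'$, so $\tau$ must subdivide $e$ at some stage. I claim that every step of $\tau$ performed before that one is independent of $e$ at the stage it occurs; granting this, one shuttles the $e$-step to the front of $\tau$ by a chain of independent transpositions, obtaining a sequence $\tau'$ with the same first edge as $\sigma$, the same endpoint $K'$, and — by the last remark — the same induced $L'$ as $\tau$, which completes the reduction. To prove the claim, suppose a step $\Sub_f$ of $\tau$ preceding the $e$-step were \emph{not} independent of $e$, so that $f$ shares a vertex with $e$ and $e\cup f$ spans a triangle at that stage; then $\Sub_e$ and $\Sub_f$ cut that triangle along two cevians issuing from distinct vertices, i.e.\ along segments that cross transversally, and since a segment, once subdivided, persists as a union of edges in every later iterated edge subdivision, both crossing segments would appear as subcomplexes of the common complex $K'$ — impossible in a simplicial complex. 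This crossing argument (equivalently: confluence of the edge-subdivision process on the set of sequences terminating at a fixed complex) is the one genuinely geometric ingredient, and making it precise in the presence of earlier subdivisions — tracking how the walls of $\Sub_e$ and $\Sub_f$ sit inside the relevant $2$-cell of the current triangulation — is where I expect the real work; the remaining ingredients (commutation of independent subdivisions, inheritance of fullness, and the join case $L=K\ast J$ recalled in the text) are routine manipulations of stars and links.
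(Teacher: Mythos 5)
Your overall strategy (reorder $\tau$ by transpositions of commuting subdivisions until it starts with the same edge $e$ as $\sigma$, then induct) is coherent, and the auxiliary facts you call routine (inheritance of fullness, commutation of independent subdivisions, equal lengths of the two sequences) do check out. But the pivotal claim --- that in $\tau$ every step performed before the $e$-step is independent of $e$ at the stage it occurs --- is exactly the point left unproved, and the sketch you give does not work as stated. First, the triangle $T$ spanned by $e\cup f$ at that stage of $\tau$ need not be a simplex of the original $K$: its apex may be a vertex created by earlier steps that \emph{are} independent of $e$ (for instance, subdividing an edge of $\Lk_K e$ when $K$ is at least $2$-dimensional inserts a new vertex into the link of $e$). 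So there is no ``cevian of $\Sub_e$'' inside $T$ to play against the cevian of $\Sub_f$; what persists from $\sigma$'s first step is rather the wall $v_e \ast \Lk_K e$, and you would have to argue that the $\Sub_f$-cevian crosses that wall. Second, and more seriously, ``two crossing segments cannot both appear as subcomplexes of $K'$'' is false as stated: two segments, each a union of edges of $K'$, may perfectly well cross at a \emph{vertex} of $K'$. The simplicial-complex axiom only forbids a crossing interior to simplices, so you must additionally show the crossing point can never become a vertex under iterated edge subdivision (e.g.\ by checking that it has non-dyadic barycentric coordinates in the ambient simplex of $K$, while every vertex produced by iterated midpoint insertion is dyadic). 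You flag this yourself as ``where I expect the real work'', and without it the claim, and hence the whole reordering reduction, is not established.

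More importantly, none of this machinery is needed: the lemma has a direct proof that avoids comparing two subdivision sequences altogether, and this is what the paper does. Since $K$ is full in $L$, each simplex $\gs$ of $L$ meets $K$ in a single face $\gt=\gs\cap K$, and every edge that ever gets subdivided lies in the current subdivision of $K$; inside $\gs$ such an edge can only lie in the current subdivision of $\gt$, so at every stage the subdivision of $\gs$ is the join of the current subdivision of $\gt$ with the opposite face $\gs-\gt$. The final subdivision of $\gt$ is just the restriction of $K'$ to $\gt$, so $\gs$ ends up subdivided as $\gt'\ast(\gs-\gt)$, which manifestly depends only on $K'$. In effect you set out to prove a much stronger confluence statement about edge-subdivision sequences than the lemma requires; even if your crossing argument were completed along the lines indicated above, it would remain a long detour compared with this short direct argument.
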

\begin{proof}
    It suffices to show that any simplex $\gs$ of $L$ is subdivided identically in $L'$.
    Since $K$ is full, the intersection $\gs \cap K$ is a sub-simplex $\tau$.
    Any choice of edge subdivisions of $K$ which produces $K'$ obviously produces the same final subdivision $\tau'$ of $\tau$, so $\gs$ is subdivided into the join $\tau' \ast (\gs - \tau)$.
\end{proof}

The following lemma shows that the links of simplices in a subdivided complex are mostly unchanged, however some get subdivided and some get suspended.
\begin{lemma}
    [cf.
    \cite{cd95b}*{Section 5.3.3}]\label{l:subcases}
    Let $\gs$ be a simplex in $\Sub_e(L)$.
    There are four cases to consider:
    \begin{enumerate1}
        \item If $\gs \in \Lk_{ L}e$, then $\Lk_{ \Sub_e(L)}\gs = \Sub_e(\Lk_{ L}\gs)$.
        \item If $\gs = v_{e} \ast \gt$ for $\gt \in \Lk_{L} e$, then $\Lk_{ \Sub_e(L)}\gs = S^0 \ast \Lk_{ L}(e*\gt)$.
        \item If $\gs = [v_{e},v^\pm] * \gt$ for $\gt \in \Lk_{L} e$, then $\Lk_{ \Sub_e(L)}\gs = \Lk_{ L}(e*\gt )$.
        \item For all other simplices $\gs$, $\Lk_{ \Sub_e(L)}\gs = \Lk_{ L}\gs$.
    \end{enumerate1}
\end{lemma}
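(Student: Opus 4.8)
The plan is to analyze each case by unwinding the combinatorics of edge subdivision, using the fact that $\Sub_e(L)$ replaces $\oSt_L e$ by the cone $v_e * (S^0 * \Lk_L e)$, where $S^0 = \{v^+, v^-\}$ consists of the two endpoints of $e$. Throughout I will use the basic identity for links in a join: if $\gs = \gs_1 * \gs_2$ with $\gs_i$ living in factors $X_i$, then $\Lk_{X_1 * X_2}\gs = \Lk_{X_1}\gs_1 * \Lk_{X_2}\gs_2$; and I will repeatedly use that $\St_L e = e * \Lk_L e$, so $\Lk_L(e * \gt) = \Lk_{\Lk_L e}\gt$ for $\gt \in \Lk_L e$. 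The key bookkeeping point is to describe, for a simplex $\gs$ of $\Sub_e(L)$, exactly which simplices of $\Sub_e(L)$ contain $\gs$, since that determines $\Lk_{\Sub_e(L)}\gs$; and the simplices of $\Sub_e(L)$ are either (a) old simplices of $L - e$ (not containing $e$), or (b) simplices of the form $\gd * \gt$ where $\gd$ is a face of the new ``bowtie'' complex $v_e * S^0$ (i.e.\ $\gd \in \{\varnothing, v_e, v^+, v^-, [v_e,v^+], [v_e,v^-]\}$) and $\gt \in \Lk_L e$.

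First, for case (1): if $\gs \in \Lk_L e$, a simplex containing $\gs$ in $\Sub_e(L)$ is $\gd * \gt'$ with $\gt' \supseteq \gs$ in $\Lk_L e$ and $\gd$ a face of the bowtie. The faces of the bowtie that can be added are exactly $\varnothing, v_e, v^+, v^-, [v_e,v^+], [v_e,v^-]$, which assemble to the subdivided edge $\Sub(\d e) = S^0 * v_e$ — but note this is $\Sub_e$ of the edge $e$, viewed as the new link directions. Combining the ``vertical'' directions ($v_e, v^\pm$) with the $\Lk_{\Lk_L e}\gs$ directions yields $\Lk_{\Sub_e(L)}\gs = \Sub_e(\Lk_L e \text{ near } \gs) = \Sub_e(\Lk_L \gs)$; here one must check that the edge being subdivided in $\Lk_L\gs$ is again (the image of) $e$, which holds because $e \in \Lk_L\gs$ precisely when $\gs \in \Lk_L e$. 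For case (2), $\gs = v_e * \gt$: a simplex of $\Sub_e(L)$ containing $v_e$ must lie in the cone $v_e * (S^0 * \Lk_L e)$, so containing $v_e * \gt$ means adding a face of $S^0 * \Lk_{\Lk_L e}\gt$; since both $v^+$ and $v^-$ remain available (they are joined to $v_e$ in the bowtie) we get the full $S^0$, and $\Lk_{\Lk_L e}\gt = \Lk_L(e * \gt)$, giving $\Lk_{\Sub_e(L)}\gs = S^0 * \Lk_L(e * \gt)$.

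For case (3), $\gs = [v_e, v^\pm] * \gt$ (say $[v_e,v^+]*\gt$): now the bowtie face $[v_e,v^+]$ is maximal among bowtie faces containing $v^+$ in the direction of $v_e$ — we cannot also add $v^-$ since $[v_e,v^+,v^-]$ is not a simplex (the bowtie has no $2$-simplex), and adding anything else from the bowtie would exceed it. So the only extra directions come from $\Lk_{\Lk_L e}\gt = \Lk_L(e*\gt)$, yielding $\Lk_{\Sub_e(L)}\gs = \Lk_L(e*\gt)$. For case (4), the remaining simplices $\gs$ are those lying in $L - e$ but not in $\Lk_L e$ (i.e.\ $\gs$ meets $\d e$ but is not a cone on a link simplex with apex in the bowtie, equivalently $\gs$ already existed in $L$ and was untouched because it is not a face of $\St_L e$ beyond possibly containing an endpoint): here the subdivision does not affect the simplices containing $\gs$, so $\Lk_{\Sub_e(L)}\gs = \Lk_L\gs$. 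I should double-check the borderline: a simplex $\gs$ containing exactly one endpoint $v^+$ but no vertex of $\Lk_L e$ beyond that — such $\gs$ is not in $\Lk_L e$, not of the form $v_e * \gt$ or $[v_e,v^\pm]*\gt$, so it falls in case (4), and indeed its link only loses $e = [v^+,v^-]$-related simplices if $v^- \in \Lk_L\gs$, i.e.\ if $\gs * v^- $ was a simplex, which forces $\gs \in \d e * (\text{something})$... here the delicate point (and the main obstacle) is confirming that the four cases are genuinely exhaustive and mutually exclusive, so the careful enumeration of which bowtie faces are attachable to a given $\gt$, and the verification that a simplex containing one endpoint of $e$ but no link vertex genuinely has unchanged link, is where I would spend the most care. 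The reference to \cite{cd95b}*{Section 5.3.3} suggests this case analysis is standard, so the proof should be a matter of writing out the join decompositions cleanly rather than overcoming a real difficulty.
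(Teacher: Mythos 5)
The paper does not actually prove this lemma (it is stated with a citation to Charney--Davis), so the only question is whether your direct verification is complete. Your overall strategy is the right one, and cases (2) and (3) are fine: cofaces of $v_e$ are exactly the new simplices $v_e*\gd*\gt$ with $\gd\in\{\varnothing,v^{+},v^{-}\}$ and $\gt\in\Lk_L e$, and the bowtie has no triangle. But there are two concrete problems. First, in case (1) your opening claim that every simplex of $\Sub_e(L)$ containing $\gs$ has the form $\gd*\gt'$ with $\gd$ a bowtie face and $\gt'\in\Lk_L e$ is false: $\gs\in\Lk_L e$ can have cofaces that do not lie in the (subdivided) star of $e$ at all. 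For example, if $L$ consists of the triangles $[v^{+},v^{-},a]$ and $[a,b,c]$ and $\gs=a$, then $[a,b,c]$ is a coface admitting no such decomposition. As written, your bookkeeping produces only $(v_e*S^{0})*\Lk_L(e*\gs)$, which is the closed star of $v_e$ inside $\Sub_e(\Lk_L\gs)$, not all of it. The fix is easy but must be said: split the cofaces of $\gs$ into those lying in $L\smallsetminus\oSt_L e$, which contribute $\Lk_L\gs$ minus the open star of $e$ in $\Lk_L\gs$, and those lying in $v_e*S^{0}*\Lk_L e$, which contribute $(v_e*S^{0})*\Lk_{\Lk_L e}\gs$; the union is exactly $\Sub_e(\Lk_L\gs)$ because $e\in\Lk_L\gs$.

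Second, and more seriously, case (4) is left genuinely unresolved at exactly the point where the content lies, and the reasoning you do give ("the subdivision does not affect the simplices containing $\gs$") is wrong for the borderline simplices you flag. If $\gs=v^{+}*\gs'$ contains one endpoint of $e$ and $v^{-}*\gs\in L$, then the cofaces of $\gs$ through $e$ are destroyed by the subdivision, but they are replaced by new cofaces through $v_e$: the old coface $e*\gt*\gs'$ yields the new coface $v_e*\gt*\gs$. So the link does not literally "lose $e$-related simplices"; rather $\Lk_{\Sub_e(L)}\gs$ is obtained from $\Lk_L\gs$ by the vertex substitution $v^{-}\mapsto v_e$, i.e.\ the equality in case (4) holds only up to this canonical simplicial isomorphism (which is all the paper needs, since only $W_{\Lk\gs}$ matters). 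Your write-up stops short of this verification and even entertains the possibility that simplices are lost, which, if true, would falsify case (4). By contrast, the exhaustiveness you worry about most is immediate: a simplex of $\Sub_e(L)$ either contains $v_e$ (cases (2)--(3), according to whether it contains $v^{\pm}$) or is a simplex of $L$ not containing $e$, which is either in $\Lk_L e$ (case (1)) or not (case (4)). So the proposal needs the coface enumeration in case (1) corrected and the endpoint subcase of case (4) actually carried out before it is a proof.
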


It is convenient to phrase Theorem \ref{t:subdivideedge} in terms of edge subdivisions rather than edge removals.
For the purposes of $\F$-$L^2$-Betti numbers there is no difference between the two.
\begin{lemma}
    Let $e$ be an edge in a flag complex $L$.
    Then
    \[
    b_*^{(2)}(W_{\Sub_e(L)}; \F) = b_*^{(2)}(W_{L-e}; \F).
    \]
\end{lemma}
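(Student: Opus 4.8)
The statement to prove is $b_*^{(2)}(W_{\Sub_e(L)}; \F) = b_*^{(2)}(W_{L-e}; \F)$, where $\Sub_e(L)$ is the edge subdivision of $L$ along $e$, with new midpoint vertex $v_e$. The plan is to apply Lemma \ref{l:v-remove} with the vertex $v_e$ inside the complex $\Sub_e(L)$, after identifying the link of $v_e$.

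First I would identify $L - v_e$ inside $\Sub_e(L)$. By construction, $\Sub_e(L)$ is obtained from $L$ by replacing the open star of $e$ with the cone $v_e \ast (S^0 \ast \Lk_L e)$, where $S^0$ consists of the two endpoints of $e$. The simplices of $\Sub_e(L)$ not containing $v_e$ are precisely the simplices of $L$ that do not contain $e$, i.e. the simplices of $L - e$; hence $\Sub_e(L) - v_e = L - e$ as full subcomplexes. Next I would identify $\Lk_{\Sub_e(L)} v_e$: this is the suspended link $S^0 \ast \Lk_L e$ (this is case (2) of Lemma \ref{l:subcases} with $\gt$ empty, or is immediate from the definition of $\Sub_e$). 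Since suspension is join with $S^0$, and $W_{S^0}$ is the infinite dihedral group $D_\infty$ acting on $\rr$, we get $W_{\Lk_{\Sub_e(L)} v_e} \cong D_\infty \times W_{\Lk_L e}$ and $\Sigma_{\Lk_{\Sub_e(L)} v_e} \cong \rr \times \Sigma_{\Lk_L e}$. Because $\rr$ is contractible and $D_\infty$ has a finite-index copy of $\zz$, the factor $\rr$ with its $D_\infty$-action is $\F$-$L^2$-acyclic, i.e. $b_k^{(2)}(D_\infty; \F) = 0$ for all $k$; so by the Künneth formula $b_k^{(2)}(W_{\Lk_{\Sub_e(L)} v_e}; \F) = 0$ for every $k$.

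With this vanishing in hand, Lemma \ref{l:v-remove} applies to the vertex $v_e$ in $\Sub_e(L)$ for every index $i$ (both hypotheses, on $b_i^{(2)}$ and $b_{i-1}^{(2)}$ of the link, hold), giving
\[
b_i^{(2)}(W_{\Sub_e(L) - v_e}; \F) = b_i^{(2)}(W_{\Sub_e(L)}; \F)
\]
for all $i$. Combined with the identification $\Sub_e(L) - v_e = L - e$ from the first step, this is exactly the claim.

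The only point requiring any care — and it is minor — is the bookkeeping identifying $\Sub_e(L) - v_e$ with $L - e$: one must check that removing the open star of $v_e$ in $\Sub_e(L)$ recovers exactly the full subcomplex of $L$ spanned by vertices not equal to either endpoint-relation with $e$, i.e. that no stray simplices survive or are lost. This is immediate from the combinatorial description of edge subdivision (the simplices through $v_e$ are in bijection with the simplices of $L$ through $e$), so there is no real obstacle; the substantive input is entirely the $\F$-$L^2$-acyclicity of $\Sigma_{\Lk v_e}$, which follows from the suspension/join structure together with the Künneth formula and Lemma \ref{l:ind}.
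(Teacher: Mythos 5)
Your proposal is correct and follows essentially the same route as the paper: identify $\Lk_{\Sub_e(L)} v_e$ as the suspension $S\Lk_L e$, note that a suspension is $\F$-$L^2$-acyclic (via the $D_\infty$ factor and the K\"unneth formula), and then apply Lemma \ref{l:v-remove} to remove $v_e$, observing that $\Sub_e(L)-v_e = L-e$. The paper's proof is just a terser version of exactly this argument.
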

\begin{proof}
    Let $v$ be the midpoint of the subdivided edge.
    Its link $\Lk_{\Sub_e(L)} v = S\Lk_{L} e $ is $\F$-$L^2$-acyclic since it is a suspension.
    Removing this vertex from $\Sub_e(L)$ produces $L-e$ and does not change $\F$-$L^2$-Betti numbers by Lemma~\ref{l:v-remove}.
\end{proof}
Hence, Theorem \ref{t:subdivideedge} implies:
\begin{corollary}\label{c:vanishingsub}
    If $L$ is a flag complex with $b_i^{(2)}(W_{L}; \F) = 0$ and $e$ is an edge with $b_{i-1}^{(2)}(W_{\Lk e}; \F) = 0$, then $b_i^{(2)}(W_{\Sub_e(L)}; \F) = 0$.
\end{corollary}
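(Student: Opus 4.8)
The plan is to deduce this directly from Theorem~\ref{t:subdivideedge} together with the preceding lemma identifying the $\F$-$L^2$-Betti numbers of $W_{\Sub_e(L)}$ with those of $W_{L-e}$. First I would apply that lemma to rewrite $b_i^{(2)}(W_{\Sub_e(L)};\F)$ as $b_i^{(2)}(W_{L-e};\F)$, so that it suffices to prove $b_i^{(2)}(W_{L-e};\F) = 0$. Since $e$ is an edge of the flag complex $L$ with $b_{i-1}^{(2)}(W_{\Lk e};\F) = 0$, the hypotheses of Theorem~\ref{t:subdivideedge} are satisfied, and its conclusion gives $b_i^{(2)}(W_{L-e};\F) \le b_i^{(2)}(W_{L};\F)$; the right-hand side vanishes by assumption, which finishes the argument.

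There is no real obstacle here: all the substance lives in Theorem~\ref{t:subdivideedge} (the excision/Mayer--Vietoris computation feeding off the K\"unneth vanishing and the universality input of Lemma~\ref{l:universality}) and in the lemma converting edge subdivision into edge removal (via Lemma~\ref{l:v-remove}, using that the link of the new midpoint is a suspension, hence $\F$-$L^2$-acyclic). The one point worth checking explicitly is the bookkeeping of the degree shift — that the hypothesis on $\Lk e$ is imposed in degree $i-1$ in both statements, so the two results chain together without any reindexing — and this matches by construction.
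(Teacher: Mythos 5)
Your argument is exactly the paper's: the corollary is stated as an immediate consequence of the preceding lemma (identifying $b_*^{(2)}(W_{\Sub_e(L)};\F)$ with $b_*^{(2)}(W_{L-e};\F)$ via removal of the midpoint vertex, whose link is a suspension) combined with Theorem~\ref{t:subdivideedge}, and your degree bookkeeping is correct. Nothing to add.
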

\begin{corollary}\label{c:concentrated+greater}
    Let $L'$ be an iterated edge subdivision of a flag complex $L$, and $n \in \mathbb{N}$.
    \begin{enumerate1}
        \item\label{i:acyclic}
        If for all odd-dimensional simplices $\gs^{2k-1} \in L$ (including $\gs = \emptyset$) we have $b_{*}^{(2)}(W_{\Lk \gs^{2k-1}}; \F) = 0$, then $b_{*}^{(2)}(W_{L'}; \F) = 0$.

        \item\label{i:concentration}
        If for all odd-dimensional simplices $\gs^{2k-1} \in L$ (including $\gs = \emptyset$) we have $b_{\neq n-k}^{(2)}(W_{\Lk \gs^{2k-1}}; \F) = 0$, then $b_{\neq n}^{(2)}(W_{L'}; \F) = 0$.

        \item\label{i:greater}
        If for all odd-dimensional simplices $\gs^{2k-1} \in L$ (including $\gs = \emptyset$) we have $b_{> n-k}^{(2)}(W_{\Lk \gs^{2k-1}}; \F) = 0$, then $b_{> n}^{(2)}(W_{L'}; \F) = 0$.
    \end{enumerate1}
\end{corollary}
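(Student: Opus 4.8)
The plan is to prove all three items simultaneously by induction on the number of edge subdivisions needed to obtain $L'$ from $L$, using Corollary~\ref{c:vanishingsub} at each step. The base case $L' = L$ is trivial: item \eqref{i:acyclic} is the hypothesis applied to $\gs = \emptyset$, and items \eqref{i:concentration}, \eqref{i:greater} likewise follow by taking $\gs = \emptyset$ (so $k = 0$). For the inductive step, write $L' = \Sub_e(L'')$ where $L''$ is an iterated edge subdivision of $L$ with one fewer subdivision. The crucial point is that the hypotheses of the corollary are \emph{inherited} by $L''$: we need to verify that for every odd-dimensional simplex $\gt^{2j-1} \in L''$ the relevant vanishing $b_{*}^{(2)}(W_{\Lk_{L''} \gt}; \F) = 0$ (resp. the concentration/upper-vanishing statements with the appropriate index shift) still holds. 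This requires tracking how links of odd-dimensional simplices behave under a single edge subdivision, which is exactly what Lemma~\ref{l:subcases} provides.

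So the first real step is a bookkeeping lemma: if $L''$ satisfies the hypothesis of item \eqref{i:acyclic} (resp. \eqref{i:concentration}, \eqref{i:greater}) with parameter $n$, then so does $\Sub_e(L'')$. Here I would go through the four cases of Lemma~\ref{l:subcases} for an odd-dimensional simplex $\gs \in \Sub_e(L'')$. In case (4) the link is unchanged, so there is nothing to do. In case (1), $\gs \in \Lk_{L''} e$ is odd-dimensional and $\Lk_{\Sub_e(L'')}\gs = \Sub_e(\Lk_{L''}\gs)$ — but $\Lk_{L''}\gs$ is itself a flag complex to which the inductive hypothesis on $L''$ applies (its odd-dimensional simplices are odd-dimensional simplices of $L''$, linking correctly), so the previous case of the induction — subdividing one edge of a complex satisfying the hypotheses — gives the conclusion for $\Sub_e(\Lk_{L''}\gs)$; one must be a little careful that this is a genuinely smaller instance, but it is, since $\Lk_{L''}\gs$ has strictly lower dimension. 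In case (2), $\gs = v_e * \gt$ with $\gt^{2j-1} \in \Lk_{L''} e$, so $\gs$ has dimension $2j$, which is \emph{even}, and there is nothing to check. In case (3), $\gs = [v_e, v^{\pm}] * \gt$ with $\gt^{2j-1} \in \Lk_{L''}e$, so $\gs$ has dimension $2j+1 = 2(j+1)-1$, odd, with link $\Lk_{L''}(e * \gt)$; now $e * \gt$ is a simplex of $L''$ of dimension $2j+1$, again odd, so the hypothesis on $L''$ applied to $e*\gt$ gives precisely $b_{*}^{(2)}(W_{\Lk_{L''}(e*\gt)};\F) = 0$ (resp. $b_{\neq n - (j+1)}^{(2)} = 0$, matching that the new simplex has "$k$" equal to $j+1$). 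This case-check is where the index shift in items \eqref{i:concentration} and \eqref{i:greater} must be confirmed to line up correctly — that is the main obstacle, and it is purely combinatorial.

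Once the bookkeeping lemma is in hand, the inductive step is immediate: $L''$ satisfies the hypotheses, so by induction $b_i^{(2)}(W_{L''};\F) = 0$ for $i = *$ (resp. $i \neq n$, resp. $i > n$); and the subdivided edge $e$ of $L''$ has link $\Lk_{L''} e$, which — being the link of an \emph{even-dimensional} simplex, namely the edge $e = \gs^1$... wait, $e$ is $1$-dimensional, hence odd — so the hypothesis on $L''$ applied to the odd simplex $e = \gs^{2\cdot 1 - 1}$ (i.e. $k = 1$) gives $b_{i-1}^{(2)}(W_{\Lk_{L''} e};\F) = 0$ for $i-1 = *$ (resp. $i - 1 \neq n - 1$, i.e. $i \neq n$; resp. $i - 1 > n-1$, i.e. $i > n$). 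Thus Corollary~\ref{c:vanishingsub} applies with this $i$ and yields $b_i^{(2)}(W_{\Sub_e(L'')};\F) = b_i^{(2)}(W_{L'};\F) = 0$ in the required range. This completes the induction and proves all three items. The whole argument is essentially a careful combination of Corollary~\ref{c:vanishingsub} (the analytic/homological input) with Lemma~\ref{l:subcases} (the combinatorial input on links), organized by a double induction on the number of subdivisions and the dimension of $L$.
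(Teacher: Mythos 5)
Your overall strategy is the same as the paper's: show that a single edge subdivision preserves the hypotheses, using Lemma~\ref{l:subcases} together with the identity $\Lk_{\Lk_{\gt}}\gs=\Lk_{L}(\gt\ast\gs)$ and Corollary~\ref{c:vanishingsub}, and then iterate. Your cases (3) and (4) and the index bookkeeping there are correct, and your case (1) works (though the detour through an induction on dimension is unnecessary: the vanishing for $\Sub_e(\Lk_{L''}\gs)$ follows from one application of Corollary~\ref{c:vanishingsub} to $\Lk_{L''}\gs$, using the hypothesis at $\gs$ and at $\gs\ast e$, exactly as in your case (3)).

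However, your case (2) has a genuine hole. The type-(2) simplices of $\Sub_e(L'')$ are $\gs=v_e\ast\gt$ for \emph{arbitrary} $\gt\in\Lk_{L''}e$, and such $\gs$ is odd-dimensional precisely when $\gt$ is \emph{even}-dimensional (e.g.\ $\gs=\{v_e,w\}$ for a vertex $w\in\Lk_{L''}e$). You only treated odd-dimensional $\gt$, for which $\gs$ is even and indeed nothing is needed, and so you never verify the hypothesis for these odd-dimensional type-(2) simplices. Note that here the inductive hypothesis gives you nothing directly: the link is $S^{0}\ast\Lk_{L''}(e\ast\gt)$ and $e\ast\gt$ is even-dimensional, so it is not covered by the assumption on odd simplices. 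The missing ingredient is that this link is a suspension, hence $\F$-$L^{2}$-acyclic (as in the lemma preceding Corollary~\ref{c:vanishingsub}, via the K\"unneth formula and the $\F$-$L^{2}$-acyclicity of $\Sigma_{S^0}\cong\rr$), so $b_{*}^{(2)}$ of it vanishes in all degrees and all three conditions hold trivially for these simplices. With that sentence added, your case analysis is complete and the proof goes through as in the paper.
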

\begin{proof}
    Since $\Lk_{\Lk_{\gt}}\gs=\Lk_{L}(\gt * \gs)$, it follows from Lemma~\ref{l:subcases} and Corollary~\ref{c:vanishingsub} that in all cases a single edge subdivision preserves the conditions on $L$.
\end{proof}
\begin{corollary}

    $b_{\ne n}(W_{b\d\Delta^{2n}}; \F) = 0$.
\end{corollary}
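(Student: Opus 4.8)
The plan is to obtain this as an instance of Corollary~\ref{c:concentrated+greater}\eqref{i:concentration}. Write $\d O^{m}$ for the boundary complex of the $m$-dimensional octahedron (cross-polytope); it is the $m$-fold join $\d O^{m}=\underbrace{S^0\ast\dots\ast S^0}_{m}$, a flag triangulation of $S^{m-1}$, with the convention $\d O^{0}=\emptyset$. The combinatorial input, recalled in the introduction and due to Volodin~\cite{v10}, is that $b\d\Delta^{2n}$ is an iterated edge subdivision of $\d O^{2n}$. So I would apply Corollary~\ref{c:concentrated+greater}\eqref{i:concentration} with $L=\d O^{2n}$, $L'=b\d\Delta^{2n}$, and with its parameter ``$n$'' equal to the middle dimension $n$; note that the corollary only needs \emph{some} sequence of edge subdivisions carrying $L$ to $L'$, so the usual ambiguity of iterated edge subdivisions is immaterial here.

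To verify the hypothesis I would first identify the links of odd-dimensional simplices of $\d O^{2n}$. A $(2k-1)$-simplex of $\d O^{2n}$ consists of $2k$ pairwise non-antipodal vertices, hence uses exactly one vertex from each of $2k$ of the $2n$ join factors and none in the others; a short computation with links in joins (the link within an $S^0$ of one of its two vertices being $S^{-1}$) then gives $\Lk\gs^{2k-1}=\d O^{2(n-k)}$, which is $\emptyset$ exactly when $\gs$ is top-dimensional ($k=n$), while the empty simplex ($k=0$) has link all of $\d O^{2n}$. Since $W$ of a join is the direct product of the factor groups (generators from different factors commute), in every case $W_{\Lk\gs^{2k-1}}=D_\infty^{2(n-k)}$ (the trivial group when $k=n$).

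Thus it remains to check $b^{(2)}_{\ne n-k}(D_\infty^{2(n-k)};\F)=0$ for $0\le k\le n$, and this I would get from the stronger fact that $b^{(2)}_{j}(D_\infty^{m};\F)=0$ for \emph{all} $j$ when $m\ge1$: the Davis complex of $D_\infty^{m}$ is $\rr^{m}$, on which the torsion-free finite-index subgroup $C:=[D_\infty^{m},D_\infty^{m}]\cong\zz^{m}$ acts freely and cocompactly, so by Theorem~\ref{t:binf},
\[
b^{(2)}_{j}(D_\infty^{m};\F)=\frac{b^{C}_{j}(\rr^{m};\cD_{\F C})}{[D_\infty^{m}:C]}=\frac{1}{[D_\infty^{m}:C]}\,\inf_{\substack{H<C\\ [C:H]<\infty}}\frac{b_{j}(\rr^{m}/H;\F)}{[C:H]}=0,
\]
since every $\rr^{m}/H$ is an $m$-torus with $b_{j}(\,\cdot\,;\F)=\binom{m}{j}$ while $[C:H]\to\infty$. (Equivalently one could cite the $m=1$ case and the K\"unneth formula.) For $k<n$ this makes the $(n-k)$-concentration hypothesis hold trivially (everything vanishes); for $k=n$ it holds because $b^{(2)}_{\ne0}$ of the trivial group is $0$; and the $\gs=\emptyset$ case gives $b^{(2)}_{\ne n}(W_{\d O^{2n}};\F)=0$ as well. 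Hence Corollary~\ref{c:concentrated+greater}\eqref{i:concentration} applies and yields $b^{(2)}_{\ne n}(W_{b\d\Delta^{2n}};\F)=0$.

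I do not expect a serious obstacle: morally the argument is that every odd-dimensional link in $\d O^{2n}$ is again an (even) octahedral boundary or empty, and such complexes are $\F$-$L^{2}$-acyclic in all degrees, so Corollary~\ref{c:concentrated+greater}\eqref{i:concentration} — which is tailored to absorb edge subdivisions, with Lemma~\ref{l:subcases} doing the link bookkeeping inside its proof — applies essentially verbatim. The only two points needing any care are the cited combinatorial fact that $b\d\Delta^{2n}$ really is an iterated edge subdivision of $\d O^{2n}$, and the vanishing $b^{(2)}_{j}(D_\infty^{m};\F)=0$, which is the single genuine computation here.
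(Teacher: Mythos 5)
Your proof is correct, but it takes a genuinely different route from the one the paper uses at this point. You feed $L=O^{2n}$ (the octahedral boundary) into Corollary \ref{c:concentrated+greater}\eqref{i:concentration}, relying on the fact that $b\d\Delta^{2n}$ is an iterated edge subdivision of $O^{2n}$ — which you cite from Volodin \cite{v10}, and which the paper itself only establishes later as Corollary \ref{c:octahedron} with $K=\emptyset$ (that subsection is independent of the present corollary, so there is no circularity) — together with the full $\F$-$L^2$-acyclicity of $W_{O^{2m}}\cong D_\infty^{2m}$ for $m\ge 1$, which you verify correctly via Theorem \ref{t:binf} (or, equivalently, the $m=1$ case plus the K\"unneth formula); your link bookkeeping in the join, including the cases $\gs=\emptyset$ and $\gs$ top-dimensional, is right. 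The paper instead uses only the elementary fact that the barycentric subdivision of the simplex $\Delta^{2n}$ is an iterated edge subdivision of $\Delta^{2n}$ itself (\cite{ln16}, or Lemma \ref{l:pies}), applies part \eqref{i:greater} of Corollary \ref{c:concentrated+greater} — the links in $\Delta^{2n}$ are simplices with finite Coxeter groups, so one only gets vanishing above degree $0$, hence only $b_{>n}^{(2)}(W_{b\Delta^{2n}};\F)=0$ — and then passes from the cone $b\Delta^{2n}$ to $b\d\Delta^{2n}$ and invokes Poincar\'e duality for the $2n$-manifold $\Sigma_{b\d\Delta^{2n}}$ to kill the degrees below the middle. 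So your argument buys a duality-free proof whose concentration statement comes out directly, in the spirit of how the paper later treats $b(\d\Delta^{2n+1},K)$ in Theorem \ref{t:vanishingtomei}, at the price of requiring the harder combinatorial input about $O^{2n}$; the paper's proof needs only the cheap subdivision fact for the simplex but leans on manifold Poincar\'e duality, which your route avoids.
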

\begin{proof}
    Any barycentric subdivision is obtained via a sequence of edge subdivisions, see e.g.
    \cite{ln16}*{Proposition~3.1} (or Lemma~\ref{l:pies} below for a more general statement).
    Corollary \ref{c:concentrated+greater}\eqref{i:greater} implies that $b_{>n}^{(2)}(W_{b\Delta^{2n}}; \F) = 0$.
    Since $b\Delta^{2n}$ is a cone on $b\d \Delta^{2n}$, the statement follows from this vanishing and Poincar\'e duality.
\end{proof}

If $\F = \Q$, this confirms the first part of Theorem \ref{t:main} in the even dimensional case.
Another consequence of Corollary~\ref{c:concentrated+greater} is that edge subdivisions alone cannot produce a counterexample to the Singer conjecture.
\begin{corollary}
    Suppose that $L$ is a flag triangulation of $S^{n}$ such that $P_L$ and $P_{\Lk \gs}$ satisfy the Singer conjecture for all links of odd-dimensional simplices.
    Then for any iterated edge subdivision $L'$ of $L$, $P_{L'}$ also satisfies the Singer conjecture.
\end{corollary}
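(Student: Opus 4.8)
The plan is to deduce this directly from Corollary~\ref{c:concentrated+greater} with $\F=\Q$, after translating each instance of the Singer conjecture into a vanishing statement for $L^2$-Betti numbers of the relevant right-angled Coxeter group.

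First I would set up the dictionary. An edge subdivision is a subdivision and preserves flagness, so $L'$ is again a flag triangulation of $S^n$; hence $P_{L'}$ is a closed aspherical $(n+1)$-manifold with universal cover $\Sigma_{L'}$, and — using that $C_{L'}=\pi_1(P_{L'})$ has finite index in $W_{L'}$, so that the $L^2$-Betti numbers of $\Sigma_{L'}$ for the two groups are proportional — the Singer conjecture for $P_{L'}$ is equivalent to $b^{(2)}_{\neq(n+1)/2}(W_{L'};\Q)=0$. In the same way, for a $j$-simplex $\gs$ of $L$ the link $\Lk_L\gs$ is an $(n-j-1)$-dimensional (generalized homology) sphere, $\Sigma_{\Lk_L\gs}$ is a contractible $(n-j)$-manifold, and the Singer conjecture for $P_{\Lk_L\gs}$ reads $b^{(2)}_{\neq(n-j)/2}(W_{\Lk_L\gs};\Q)=0$. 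Specializing to odd-dimensional simplices $\gs^{2k-1}$, and using the convention $\dim\emptyset=-1$ so that $k=0$ returns $\Lk_L\emptyset=L$ together with the hypothesis on $P_L$ itself, this gives $b^{(2)}_{\neq(n-2k+1)/2}(W_{\Lk_L\gs^{2k-1}};\Q)=0$ for every odd-dimensional $\gs^{2k-1}\in L$ (including $\gs=\emptyset$).

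Then I would split on the parity of $n$. If $n$ is odd, set $m=(n+1)/2\in\mathbb N$; the identity $m-k=(n-2k+1)/2$ shows that the vanishing statements above are precisely the hypothesis of Corollary~\ref{c:concentrated+greater}\eqref{i:concentration} with $\F=\Q$ and with $m$ playing the role of the corollary's ``$n$'', so that corollary yields $b^{(2)}_{\neq m}(W_{L'};\Q)=0$, which is the Singer conjecture for $P_{L'}$. If $n$ is even, then $\Sigma_L$ and each $\Sigma_{\Lk_L\gs^{2k-1}}$ is a contractible odd-dimensional manifold, so each relevant instance of the Singer conjecture is the statement that all of its $L^2$-Betti numbers vanish; the hypotheses of Corollary~\ref{c:concentrated+greater}\eqref{i:acyclic} hold with $\F=\Q$, giving $b^{(2)}_\ast(W_{L'};\Q)=0$, again the Singer conjecture for $P_{L'}$.

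The only point needing care is the index bookkeeping: that the middle dimension $(n+1)/2$ of $\Sigma_{L'}$ is the right ``$n$'' for the corollary, that the shifted middle dimension $(n-2k+1)/2$ of the link of a $(2k-1)$-simplex is the right ``$n-k$'', and that when $n$ is even one must use part \eqref{i:acyclic} rather than part \eqref{i:concentration}, as there is no middle dimension. Beyond this there is no geometric or analytic content past what Corollary~\ref{c:concentrated+greater} already supplies.
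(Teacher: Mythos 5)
Your proposal is correct and is essentially the paper's intended argument: the corollary is stated there without proof precisely because it follows immediately from Corollary~\ref{c:concentrated+greater} with $\F=\Q$, using part~\eqref{i:concentration} when $n$ is odd and part~\eqref{i:acyclic} when $n$ is even, exactly as you do. Your translation of the Singer conjecture into vanishing of $b^{(2)}_{*}(W;\Q)$ via finite-index proportionality and your index bookkeeping for the links of $(2k-1)$-simplices are both accurate.
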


In particular, this shows that any iterated edge subdivision of the $n$-fold join of $2$ points with itself $O^{n}$ (the boundary of the $n$-octahedron) satisfies the Singer conjecture.
In the next subsection we will show that $b\d\Delta^{n}$ is such, and hence the first part of Theorem \ref{t:main} holds.
In fact, we will prove that more general relative barycentric subdivisions of $\d\Delta^{n}$ are iterated edge subdivisions of $O^{n}$.

\subsection{Relative barycentric subdivisions}\label{s:subdivisions}

Given a simplicial pair $(L,K)$, we can form the \emph{relative barycentric subdivision of $L$ relative to $K$}, $b(L,K)$, by barycentrically subdividing simplices of $L$ not in $K$.
More precisely, we construct $b(L,K)$ by induction on dimension, the 0-th stage being the set of vertices of $L$.
To construct the $i$-th stage from the $(i-1)$-st stage, for each simplex $\gs\in L^{(i)}$, if $\gs\in K$ we leave it unchanged, otherwise we introduce a new vertex $v_\gs$, and cone $b(\d\gs, \d\gs \cap K)$ to $v_\gs$.
Note that $b(L, \emptyset)=b(L, pt)$ is the usual barycentric subdivision of $L$, which we shorten to $b(L)$.

Combinatorially, vertices of $b(L,K)$ are simplices of $L \smallsetminus K$ together with vertices of $K$, and simplices of $b(L,K)$ are chains of simplices of $L$ with at most one simplex in $K$.
Note that $K$ is a full subcomplex of $b(L,K)$, and if $K$ is flag, then $b(L,K)$ is also flag.

The following lemma is well-known in the case of barycentric subdivisions.
\begin{lemma}\label{l:links}
    Let $b(L,K)$ be the relative barycentric subdivision of a simplicial pair $(L,K)$.
    If $v$ is a vertex that is the barycenter of a simplex $\gs \in L - K$ then
    \[
    \Lk_{ b(L,K)}v = b(\d \sigma, \d \gs \cap K) \ast b \Lk_L \gs.
    \]
    If $v$ is a vertex of $K$, then
    \[
    \Lk_{ b(L,K)}v = b(\Lk_{L} v, \Lk_{K}v).
    \]
\end{lemma}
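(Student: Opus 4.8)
The plan is to verify the two link formulas directly from the combinatorial description of $b(L,K)$ given just before the statement: vertices of $b(L,K)$ are the simplices of $L\smallsetminus K$ together with the vertices of $K$, and simplices are chains $\gs_0 < \gs_1 < \dots < \gs_m$ of simplices of $L$ containing at most one member of $K$. The link of a vertex $v$ is then, by definition, the set of chains that can be extended by $v$, i.e.\ chains all of whose elements are comparable to $v$ (in the poset of simplices of $L$) and which, together with $v$, still contain at most one simplex of $K$.

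\emph{Case $v = \hat\gs$, the barycenter of $\gs \in L - K$.} A simplex $\tau$ of $L$ is comparable to $\gs$ iff $\tau \subsetneq \gs$ or $\tau \supsetneq \gs$. So a chain in $\Lk_{b(L,K)}v$ splits uniquely as a chain $\tau_0 < \dots < \tau_r$ below $\gs$ (with $\tau_r \subsetneq \gs$) followed by a chain $\eta_1 < \dots < \eta_s$ above $\gs$ (with $\gs \subsetneq \eta_1$). The "below" part is a chain of proper faces of $\gs$ with at most one in $K$ — but any face of $\gs$ lying in $K$ lies in $\d\gs \cap K$ — hence it is exactly a simplex of $b(\d\gs, \d\gs\cap K)$. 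The "above" part is a chain of simplices strictly containing $\gs$; writing $\eta_i = \gs \ast \eta_i'$ with $\eta_i' \in \Lk_L\gs$, and noting that since $\gs \notin K$ none of the $\eta_i$ can lie in $K$ (any simplex in $K$ has all its faces in $K$), the "above" part is an unrestricted chain of simplices of $\Lk_L\gs$, i.e.\ a simplex of $b(\Lk_L\gs) = b\Lk_L\gs$. Since the two parts are arbitrary and independent, and a chain of the first type is always compatible (as a join) with a chain of the second type, we get $\Lk_{b(L,K)}v = b(\d\gs, \d\gs\cap K) \ast b\Lk_L\gs$, as claimed.

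\emph{Case $v$ a vertex of $K$.} Now $v$ is itself a simplex of $L$ lying in $K$, so a chain $\gs_0 < \dots < \gs_m$ in $\Lk_{b(L,K)}v$ must have every $\gs_i$ comparable to the vertex $v$, and — since $v\in K$ already uses up the single allowed $K$-simplex — no $\gs_i$ may lie in $K$. Comparability to a vertex forces $v \in \gs_i$ for all $i$ (a simplex properly contained in a vertex is empty, which we may discard), so write $\gs_i = v \ast \gs_i'$ with $\gs_i' \in \Lk_L v$. The condition $\gs_i \notin K$ translates to $\gs_i' \notin \Lk_K v$: indeed $v\ast\gs_i' \in K$ iff $\gs_i' \in \Lk_K v$ because $K$ is a subcomplex containing $v$. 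Thus $\Lk_{b(L,K)}v$ is precisely the set of chains in $\Lk_L v$ with at most one element (in fact zero, but the count is the same after removing the constraint bookkeeping) — more carefully, it is the set of chains of simplices of $\Lk_L v$ containing at most one simplex of $\Lk_K v$, which by the combinatorial description is exactly $b(\Lk_L v, \Lk_K v)$.

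\emph{Expected obstacle.} The argument is essentially bookkeeping, so the only real care needed is the second case: one must check that the "at most one simplex in $K$" constraint, after stripping off $v$, becomes exactly the constraint defining $b(\Lk_L v, \Lk_K v)$, rather than something off by the already-used vertex $v$. The point is that a chain extended by $v$ uses $v \in K$ as its unique $K$-member, so the rest of the chain must avoid $K$ entirely; under $\gs_i \mapsto \gs_i'$ this is the statement that the image chain avoids $\Lk_K v$, which matches the inductive construction of $b(\Lk_L v, \Lk_K v)$ in the degenerate way that $b(\d\gs', \emptyset)$-constraints are vacuous at the bottom. Making this identification precise — and checking it is compatible with the flagness/fullness remarks so that links are taken in the expected sense — is the one spot to be careful; everything else follows from unwinding definitions.
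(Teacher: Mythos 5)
Your treatment of the barycenter case is correct and is essentially the paper's argument: split the chain into the part below $\gs$ and the part above $\gs$, note that any $K$-member must be a face of $\gs$ (so lies in $\d\gs\cap K$) and that nothing above $\gs$ can lie in $K$. The second case, however, has a genuine error. You claim that because $v\in K$ ``uses up the single allowed $K$-simplex,'' no member of a chain in $\Lk_{b(L,K)}v$ may lie in $K$, hence (after stripping $v$) the chains avoid $\Lk_K v$ entirely. This rests on a misreading of the chain description of $b(L,K)$: the unique chain member lying in $K$ is left \emph{unsubdivided} and contributes its entire vertex set to the corresponding simplex, so chain members are not in bijection with vertices of the simplex they encode (an edge of $K$ survives in $b(L,K)$, yet its two endpoints are incomparable $0$-simplices and could never both appear in a chain with at most one member in $K$). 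Consequently a simplex of $b(L,K)$ containing the vertex $v$ corresponds to a chain whose $K$-member is a simplex $\gt\in K$ with $v\in\gt$, possibly larger than $v$; removing $v$ from $\gt$ gives a simplex of $\Lk_K v$, and these unsubdivided simplices of $\Lk_K v$ genuinely occur in $\Lk_{b(L,K)}v$. Your derivation excludes exactly this part of the link. Concretely, take $L=\Delta^2$ on $\{v,a,b\}$ and $K$ the edge $va$: then $a$ is adjacent to $v$ in $b(L,K)$, so $a\in\Lk_{b(L,K)}v$ even though $a\in K$, whereas your rule produces only the simplices spanned by the barycenters of $vb$ and $vab$, a proper subcomplex of $b(\Lk_L v,\Lk_K v)=b(ab,\{a\})$.

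Your closing sentence then simply asserts that the set you obtained equals the set of chains in $\Lk_L v$ with at most one member in $\Lk_K v$, i.e.\ $b(\Lk_L v,\Lk_K v)$; that is the correct description of the link, but it contradicts what you derived rather than following from it, and the ``expected obstacle'' paragraph does not repair the discrepancy. The correct argument (and the paper's) is: a simplex of $\Lk_{b(L,K)}v$ corresponds to a chain of simplices of $L$ each properly containing $v$, with at most one member in $K$; deleting $v$ from each member gives a chain in $\Lk_L v$ with at most one member in $\Lk_K v$ (since a simplex containing $v$ lies in $K$ exactly when the complementary face lies in $\Lk_K v$), which is precisely a simplex of $b(\Lk_L v,\Lk_K v)$.
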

\begin{proof}
    If $v$ is the barycenter of $\gs$, simplices of $\Lk_{ b(L, K)}v$ correspond to chains of simplices of $L$ with at most one simplex in $K$ such that the addition of $\gs$ is still a chain.
    Every such chain is a union of two chains, one contained in $\gs$ and the other containing $\gs$.
    Since $\gs$ is not in $K$, any simplex in the chain that is in $K$ must be contained in $\gs$.
    Therefore, the chains contained in $\gs$ correspond to $b(\d \sigma, \d \gs \cap K)$, the chains containing $\gs$ correspond to $\Lk_L \gs$, and the join structure is clear.

    For $v \in K$, $\Lk_{ b(L,K)}v$ consists of chains of simplices of $L$ properly containing $v$ with at most one simplex in $K$, which is precisely a simplex in $b(\Lk_L v, \Lk_{K}v)$.
\end{proof}

The next lemma follows immediately from the definition of $b(L,K)$.
\begin{lemma}
    If $L = L_1 \cup_{L_0} L_2$, $K \subset L$, and $K_i = L_i \cap K$, then
    \[
    b(L,K) = b(L_1,K_1) \cup_{b(L_0,K_0)} b(L_2,K_2).
    \]
\end{lemma}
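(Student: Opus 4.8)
The plan is to read off the equality directly from the combinatorial description of relative barycentric subdivision recorded above: a simplex of $b(L,K)$ is a chain of simplices of $L$ containing at most one simplex of $K$ (that simplex, if present, is forced to be the minimum of the chain, since $K$ is a subcomplex), with underlying vertices the barycenters of the members of the chain not in $K$ together with the vertices of the unique member in $K$, if any. First I would record the elementary set-theoretic consequences of the hypothesis $L = L_1 \cup_{L_0} L_2$: every simplex of $L$ lies in $L_1$ or in $L_2$, and $L_1 \cap L_2 = L_0$; since $K \subseteq L$ this gives $K = K_1 \cup K_2$ with $K_1 \cap K_2 = K \cap L_0 = K_0$; and, crucially, for a simplex $\sigma \in L_j$ one has $\sigma \in K_j$ if and only if $\sigma \in K$, because $K_j = K \cap L_j$.

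The key point is that a chain of simplices of $L$ cannot straddle the two pieces: its maximal element lies in some $L_j$, and since $L_j$ is a subcomplex every face of that maximal element --- hence every member of the chain --- also lies in $L_j$; moreover the chain lies in $L_0$ exactly when its maximal element does. Combining this with the last observation of the previous paragraph, the simplices of $b(L,K)$ whose underlying chain lies in $L_j$ are precisely the simplices of $b(L_j,K_j)$ --- here one uses that for a chain inside $L_j$ the condition ``at most one member in $K$'' is the same as ``at most one member in $K_j$'' --- and likewise on the level of vertex sets. Since every simplex of $b(L,K)$ has its chain in $L_1$ or in $L_2$, and the simplices lying in both are exactly those with chain in $L_0$, i.e.\ the simplices of $b(L_0,K_0)$, we conclude $b(L,K) = b(L_1,K_1) \cup_{b(L_0,K_0)} b(L_2,K_2)$.

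Equivalently, and perhaps more in the spirit of ``follows immediately from the definition'', one may induct on $\dim L$ using the inductive construction of $b(-,-)$: the $0$-stages agree since the vertex set of $L$ decomposes as that of $L_1$ and $L_2$ glued along that of $L_0$, and in passing from the $(i-1)$-st stage to the $i$-th one processes each $\sigma\in L^{(i)}$; such $\sigma$ lies in $L_1$ or in $L_2$, and the operation performed on it --- leaving it unchanged if $\sigma\in K$, or coning $b(\partial\sigma,\partial\sigma\cap K)$ to $v_\sigma$ if not --- depends only on $\sigma$ and on $\partial\sigma\cap K=\partial\sigma\cap K_j$, hence coincides with the corresponding step in building $b(L_j,K_j)$. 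Either way there is no real obstacle; the only two points that deserve a sentence of care are that chains do not cross between $L_1$ and $L_2$, and that intersecting with $L_j$ leaves the ``not in $K$'' conditions undisturbed because $K_j=K\cap L_j$.
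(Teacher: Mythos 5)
Your argument is correct, and it is exactly the route the paper intends: the paper offers no proof beyond the remark that the lemma ``follows immediately from the definition,'' and your write-up simply fills in that verification, using the combinatorial description of $b(L,K)$ (chains with at most one member in $K$), the fact that a chain cannot straddle $L_1$ and $L_2$, and the observation that $K_j=K\cap L_j$ keeps the ``not in $K$'' conditions intact. Both of the two cautionary points you flag are indeed the only places where anything needs to be said, so nothing further is required.
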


\subsection{Relative barycentric subdivisions via edge subdivisions}
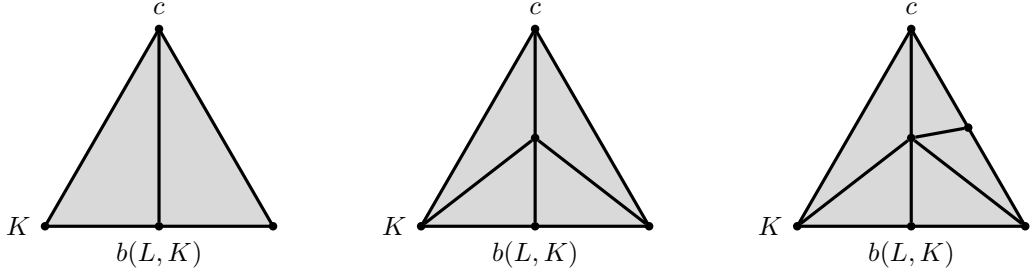
\begin{figure}
    \centering
    \usetikzlibrary{shapes.geometric, calc, positioning} \tikzset{ vertex/.style= {color = black, draw, fill = black, inner sep = 0, minimum size=2pt, shape=circle }, triangle/.style= {color = black, shape=regular polygon, regular polygon sides=3, minimum size=3.5cm, draw, fill=gray!30, very thick, outer sep=0 }, every pin edge/.style={<-,shorten <=1pt, thin } }
    \begin{tikzpicture}[very thick]
        \node[name=s, triangle] at (0,0) {}; \foreach \anchor/\placement/\a/\label in {corner 1/above/c/c, corner 2/left/a/K, corner 3/right/b/, 270/below/ab/{b(L,K)}}
        \draw (s.\anchor) node[vertex, label=\placement:$\label$ ] (\a) {};
        \draw (c)--(ab);

        \node[name=s, triangle] at (5,0) {}; \foreach \anchor/\placement/\a/\label in {corner 1/above/c/c, corner 2/left/a/K, corner 3/right/b/, 270/below/ab/{b(L,K)}, {center)+(0,0.3}/left/abc/}
        \draw (s.\anchor) node[vertex, label=\placement:$\label$ ] (\a) {};
        \draw (c)--(ab);
        \draw (a)--(abc);
        \draw (b)--(abc);

        \node[name=s, triangle] at (10,0) {}; \foreach \anchor/\placement/\a/\label in {corner 1/above/c/c, corner 2/left/a/K, corner 3/right/b/, 270/below/ab/{b(L,K)}, {center)+(0,0.3}/left/abc/, 30/right/bc/}
        \draw (s.\anchor) node[vertex, label=\placement:$\label$ ] (\a) {};
        \draw (c)--(ab);
        \draw (a)--(abc);
        \draw (b)--(abc);
        \draw (bc)--(abc);
    \end{tikzpicture}
    \caption{ $b(cL, cK)$ is an iterated edge subdivision of $cb(L, K)$.
    The leftmost figure is the cone on a relative barycentric subdivision of the bottom (edge, vertex).
    Subdividing the two edges as shown yields the relative barycentric subdivision $b(\Delta^2, \Delta^1)$.
    }
\end{figure}

Denote by $cL$ a cone on $L$ with cone vertex $c$.
\begin{lemma}\label{l:psubdiv}
    $b(cL,cK)$ is an iterated edge subdivision of $cb(L,K)$, obtained by subdividing the edges connecting $c$ to vertices in $b(L,K) \smallsetminus K$ in order of decreasing dimension.
\end{lemma}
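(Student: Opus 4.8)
The goal is to show that $b(cL, cK)$ can be obtained from $cb(L,K)$ by a sequence of edge subdivisions, where the edges being subdivided all emanate from the cone point $c$ and go to barycenters of simplices of $L\smallsetminus K$, processed in order of decreasing dimension. The natural approach is induction on dimension, tracking what each complex looks like combinatorially. Recall that vertices of $cb(L,K)$ are: the cone point $c$, the barycenters $v_\gs$ for $\gs\in L\smallsetminus K$, and the vertices of $K$; whereas vertices of $b(cL,cK)$ additionally include the barycenters $v_{c\gt}$ for every simplex $c\gt$ of $cL$ not in $cK$, i.e.\ for every $\gt\in L\smallsetminus K$ together with $\gt=\emptyset$ (the barycenter of the edge/cone-segment itself is treated as $v_{c\emptyset}$). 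So the new vertices introduced by the subdivisions are exactly the $v_{c\gt}$, $\gt\in L\smallsetminus K$, and there is one edge subdivision per such vertex.

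\textbf{Key steps.} First I would fix an ordering $\gt_1, \gt_2, \dots$ of the simplices of $L\smallsetminus K$ by decreasing dimension (ties broken arbitrarily), and let $X_0 = cb(L,K)$, $X_j = \Sub_{e_j}(X_{j-1})$ where $e_j$ is the edge of $X_{j-1}$ joining $c$ to $v_{\gt_j}$. The first thing to check is that at stage $j$, the pair $\{c, v_{\gt_j}\}$ really does span an edge of $X_{j-1}$: in $cb(L,K)$ the edge $c\,v_\gt$ is present for every $\gt\in L\smallsetminus K$ (since $\gt$, being a simplex of $L$, is joined to $c$ in $cL$, so $v_\gt$ is joined to $c$ in the cone $cb(L,K)$), and the earlier subdivisions $e_1,\dots,e_{j-1}$ only subdivide edges $c\,v_{\gt_i}$ with $\dim\gt_i \ge \dim\gt_j$; by Lemma~\ref{l:subcases}, subdividing $c\,v_{\gt_i}$ destroys the edge $c\,v_{\gt_i}$ and the edges from $v_{\gt_i}$ and $c$ into its link, but leaves $c\,v_{\gt_j}$ intact because $v_{\gt_j}$ is not in $\Lk(c\,v_{\gt_i})$ — here one uses that $\gt_j$ is neither a face nor a coface of $\gt_i$ in the chain-poset unless it sits inside $\gt_i$, and the decreasing-dimension order rules out $\gt_j \subsetneq \gt_i$ having been... wait — actually $\gt_j$ could be a proper face of $\gt_i$; I would need to check that when $\gt_j \subsetneq \gt_i$, the vertex $v_{\gt_j}$ lies in $\Lk_{X_0}(c\,v_{\gt_i})$ and hence after subdividing $e_i$ the relevant edge becomes $c\,[v_{\gt_j},\dots]$ — so one must verify that the ``$c$ to $v_{\gt_j}$'' edge survives in the subdivided complex in the appropriate form (case (2)/(3) of Lemma~\ref{l:subcases} tells us exactly how the star is modified). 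This bookkeeping is the technical core. Second, I would identify the final complex $X_N$ ($N = \#(L\smallsetminus K)$) with $b(cL, cK)$ by comparing maximal simplices: a maximal simplex of $b(cL,cK)$ is a maximal chain in the poset of simplices of $cL$ with at most one member in $cK$, and I would argue inductively that the $j$-th subdivision exactly inserts the barycenters $v_{c\gt_j}$ into all chains through $\gt_j$, matching the recursive definition of $b(cL,cK)$ (cone $v_{c\gt}$ on $b(\d(c\gt), \d(c\gt)\cap cK) = b(c\,\d\gt \cup \gt, \dots)$). The base case $\gt=\emptyset$ subdivides the genuine cone-edge $c\,v_{\emptyset}$... but $v_\emptyset$ is just $c$'s neighbor in $b(L,K)$ — here I'd instead note that when $L\smallsetminus K$ contains $\emptyset$ vacuously handled, the cone segment from $c$ to each vertex of $b(L,K)\smallsetminus K$ is subdivided, which is precisely what the figure illustrates. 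Third, I would invoke Lemma~\ref{l:iesind} or a direct join argument to see the result is independent of the arbitrary tie-breaking within a fixed dimension (simplices of the same dimension in $L\smallsetminus K$ are not faces of each other, so their cone-edges are disjoint and the subdivisions commute).

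\textbf{Main obstacle.} The hard part is the inductive step verifying that the edge $c\,v_{\gt_j}$ — or its descendant after the earlier subdivisions — is available to subdivide and that subdividing it produces exactly the cone $v_{c\gt_j} * (\text{already-subdivided boundary of } c\gt_j)$. This requires a careful application of the four cases of Lemma~\ref{l:subcases} to the edges $c\,v_{\gt_i}$ with $\gt_i \supsetneq \gt_j$ and with $\dim\gt_i = \dim\gt_j$, tracking how the $1$-skeleton near $c$ evolves. I expect the cleanest route is actually the recursive/inductive-on-$\dim L$ proof mirroring the recursive definitions of $b(L,K)$ and $cb$: write $L$ as built up skeleton by skeleton, push the cone through, and observe that coning then subdividing top cells equals subdividing within the boundary (inductive hypothesis) then coning — essentially the statement ``$\Sub$ commutes with coning on the cells being subdivided'' together with the recursion $b(cL,cK) = $ cone-and-subdivide. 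I would present the proof in that recursive form, so that Lemma~\ref{l:subcases} is invoked only for the single outermost layer of edge subdivisions at each stage.
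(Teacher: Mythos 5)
Your overall strategy --- induct on the top dimension of simplices of $L \smallsetminus K$, subdivide the cone edges to barycenters in order of decreasing dimension, observe that edges to same-dimension barycenters have disjoint open stars so those subdivisions commute, and use Lemma~\ref{l:subcases} to track links --- is exactly the strategy of the paper's proof. But as written, the proposal stops short of the actual content of the lemma in two places. First, the worry about whether the edge from $c$ to $v_{\gt_j}$ is still available at stage $j$ has an immediate answer that you never give: subdividing an edge $e'$ removes precisely the open simplices containing \emph{both} endpoints of $e'$, and the edge $\{c, v_{\gt_j}\}$ never contains $v_{\gt_i}$, so it survives all earlier subdivisions; no case analysis of $\gt_j \subsetneq \gt_i$ is needed (and your description of what an edge subdivision ``destroys'' --- edges from $c$ into the link --- is not accurate, since those do not contain the subdivided edge). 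Relatedly, the discussion of a barycenter $v_{c\emptyset}$ is off: $\{c\}$ is a simplex of $cK$, so no such vertex is introduced; the new vertices are exactly $v_{c\gt}$ for $\gt \in L \smallsetminus K$.

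Second, and more seriously, the central verification --- that the end result of these subdivisions is $b(cL, cK)$ --- is only announced (``I would argue inductively that the $j$-th subdivision inserts the barycenters\dots'', ``the hard part is the inductive step\dots''), not carried out. The paper does this concretely: writing $L$ for $L'$ with the top-dimensional simplices of $L' \smallsetminus K$ removed, subdividing the edge $c\,v_\gs$ in $cb(L',K)$ for each top simplex $\gs$ produces $cb(L,K)$ with $b(\d\gs, \d\gs \cap K)$ coned to $v_\gs$ and $cb(\d\gs, \d\gs \cap K) \cup b(\gs, \gs \cap K)$ coned to $v_{c\gs}$; then the remaining subdivisions turn $cb(L,K)$ into $b(cL,cK)$ by the inductive hypothesis, while Lemma~\ref{l:subcases}(4) shows the link of $v_\gs$ is untouched and Lemma~\ref{l:subcases}(1) and (4), together with a second application of induction (to the pair $(\d\gs, \d\gs \cap K)$), show the link of $v_{c\gs}$ becomes $b(\d(c\gs), \d(c\gs) \cap cK)$, so the final complex is $b(cL', cK)$. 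Some identification of this kind --- whether by links, as in the paper, or by the chain/poset description of simplices that you gesture at --- is the heart of the lemma; your closing suggestion to recast the argument in recursive form is indeed the right way to finish, but it is precisely the part left undone.
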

\begin{proof}
    First, note that if we have a collection of edges with disjoint open stars, then we can do the edge subdivision on them in any order and get the same complex in the end.
    We claim that this is the case for our procedure at each stage.
    For two edges starting at $c$, the condition of having disjoint open stars is equivalent to their endpoints not being adjacent, which holds for vertices in $b(L,K) \smallsetminus K$ of the same dimension, and this disjointness is obviously preserved under subdivision.

    We now proceed by induction on the largest dimension of simplices of $L \smallsetminus K$.
    In the base case $L=K$ and both subdivisions are just $cL$ so we are not subdividing anything.
    Now, assume $(L',K)$ is a pair, and let $L$ be $L'$ with the top dimensional simplices of $L' \smallsetminus K$ removed.

    Now, for each top dimensional simplex $\gs$ of $L' \smallsetminus K$, subdivide the edge $e$ in $cb(L',K)$ connecting $c$ to the barycenter of $\gs$ in $b(L', K)$.
    This complex is obtained from $cb(L,K)$ by coning off $b(\d \gs, \d \gs \cap K)$ to $v_\gs$, forming $b(\gs, \gs \cap K)$, and then coning off $cb(\d \gs, \gs \cap K) \cup b(\gs, \gs \cap K)$ to $v_{c\gs}$.

    Now, by induction the rest of the edge subdivisions turn the pair $cb(L,K)$ into $b(cL,cK)$.
    The vertices $v_\gs$ are not contained in the closed star of any such edge, so by Lemma~\ref{l:subcases}(4) the link of $v_\gs$ is unchanged.
    By repeated application of Lemma~\ref{l:subcases}(1) and (4), the link of $v_{c\gs}$ gets subdivided, and hence, by induction, becomes $b(c\d \gs, c(\d \gs \cap K)) \cup b(\gs, \gs \cap K)=b(\d(c\gs), \d(c\gs) \cap cK)$.

    So, the resulting complex is obtained from $b(cL,cK)$ by coning off $b(\d \gs, \d \gs \cap K)$ to $v_\gs$, and coning off $b(\d(c\gs), \d(c\gs) \cap cK)$ to $v_{c\gs}$ for each top simplex $\gs$, which is precisely $b(cL',cK)$.
\end{proof}
\begin{lemma}\label{l:pies}
    If $K$ is a flag subcomplex of $L$, then $b(L,K)$ can be obtained by an iterated edge subdivision of $L$.
\end{lemma}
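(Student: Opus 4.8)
The plan is to induct on the number of vertices of $L$; when $L$ is a single vertex, $b(L,K)=L$ and there is nothing to prove. Since $b(L,\emptyset)=b(L,\{v_0\})$ for any vertex $v_0$ of $L$ and a single vertex is flag, we may assume $K$ contains a vertex $v$. Because $L$ is flag, $\St_L v$, $\Lk_L v$ and $L-v$ are full subcomplexes of $L$, all flag, with $\St_L v=v*\Lk_L v$ and $L=\St_L v\cup_{\Lk_L v}(L-v)$. The gluing formula for relative barycentric subdivisions (stated just before Lemma~\ref{l:psubdiv}) then gives
\[
b(L,K)=b(\St_L v,\ \St_L v\cap K)\ \cup_{\,b(\Lk_L v,\ \Lk_L v\cap K)}\ b(L-v,\ (L-v)\cap K).
\]

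I would produce $b(L,K)$ from $L$ in two stages. First, as $L-v$ has one fewer vertex, the inductive hypothesis makes $b(L-v,(L-v)\cap K)$ an iterated edge subdivision of $L-v$; since $L-v$ is full in $L$, Lemma~\ref{l:iesind} lets us carry these edge subdivisions out inside $L$. Among the edges involved, only those lying in $\Lk_L v$ (and its subdivisions) have $L$-link strictly larger than their $(L-v)$-link, and since $\Lk_L v$ is full in $L-v$ it gets subdivided in this stage precisely to $b(\Lk_L v,\Lk_L v\cap K)$; hence the star piece $v*\Lk_L v$ is carried to $v*b(\Lk_L v,\Lk_L v\cap K)$, and $L$ has become $\bigl(v*b(\Lk_L v,\Lk_L v\cap K)\bigr)\cup b(L-v,(L-v)\cap K)$. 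Second, I would apply Lemma~\ref{l:psubdiv} with cone point $v$: subdividing the edges joining $v$ to the barycenters of $b(\Lk_L v,\Lk_L v\cap K)$ lying outside $K$, in order of decreasing dimension, turns $v*b(\Lk_L v,\Lk_L v\cap K)$ into $b(\St_L v,\St_L v\cap K)$. Each of these edges contains $v$, which is not a vertex of $b(L-v,(L-v)\cap K)$, so its link lies entirely in the star piece and this stage does not disturb the part of the complex built in the first stage. The result is exactly $b(L,K)$, exhibited as an iterated edge subdivision of $L$.

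The main obstacle is the bookkeeping that makes the two stages fit together, and in particular the second one: Lemma~\ref{l:psubdiv} is phrased for $b(cM,cN)$, so one needs $\St_L v\cap K$ to equal the cone $v*(\Lk_L v\cap K)$, equivalently $\Lk_K v=\Lk_L v\cap K$. This can fail for a given $v\in K$ (for instance $L=\Delta^3$ with $K$ a non-filled $4$-cycle in its $1$-skeleton), so one must either choose $v$ inside $K$ so that it holds, or run a preliminary reduction: enlarging $K$ to contain every vertex of $L$ does not change $b(L,K)$ and, by flagness, forces every minimal simplex of $L\smallsetminus K$ to be an edge, after which one can peel off such edges together with their flag closures in $L$, lowering the number of simplices of $L\smallsetminus K$, until the cone condition becomes available. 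The remaining routine-but-delicate points are that $\St_L v$, $\Lk_L v$ and $L-v$ really are full in $L$ (which uses flagness of $L$) and that, via Lemma~\ref{l:iesind}, subdividing one piece of the decomposition affects the others only in the claimed way.
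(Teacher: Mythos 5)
Your argument hinges on finding a vertex $v \in K$ with $\St_L v \cap K = v \ast (\Lk_L v \cap K)$ (equivalently $\Lk_K v = \Lk_L v \cap K$): this is exactly what is needed for Lemma~\ref{l:psubdiv} to turn $v \ast b(\Lk_L v, \Lk_L v \cap K)$ into the star piece $b(\St_L v, \St_L v \cap K)$ of the gluing decomposition of $b(L,K)$. As your own example ($L = \Delta^3$, $K$ an empty $4$-cycle) shows, this condition can fail for \emph{every} vertex of $K$, so this is not a marginal case, and the proposed repair --- enlarge $K$ by all vertices of $L$, note that minimal simplices of $L \smallsetminus K$ are then edges, and ``peel off such edges together with their flag closures'' --- is not a proof. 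It is not specified what operation the peeling performs on the pair $(L,K)$, why afterwards some vertex satisfies the cone condition, how the peeled pieces are reassembled into an iterated edge subdivision of the \emph{original} $L$, or why the process terminates. This missing case is precisely where the actual difficulty of the lemma lives, and it is what the paper's proof is built to handle: it inducts on the number of simplices of $L \smallsetminus K$, removes a maximal simplex $\gs$, replays the inductive subdivision sequence in the larger complex, and carries two ordering conditions --- (i) every barycenter is created as the midpoint of an edge between barycenters of two \emph{opposite} faces, and (ii) if two intersecting faces span a simplex of $L\smallsetminus K$, that simplex's barycenter appears no later than one of theirs --- exactly to guarantee that the first edge crossing the interior of $\gs$ joins barycenters of opposite faces, so that subdividing it creates the barycenter of $\gs$ at the right moment. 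Your sketch defers this entire issue to an unproven claim, so the proof is incomplete.

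A secondary point: you assume $L$ is flag, but the lemma only assumes $K$ is flag, and it is invoked (in Lemma~\ref{l:twosubs}) for an arbitrary ambient complex. Most of your two-stage bookkeeping does not actually need flagness of $L$ ($L - v$ is always full, $\St_L v = v \ast \Lk_L v$ always, and Lemma~\ref{l:iesind} together with the simplexwise locality of relative barycentric subdivision gives the restriction statements you use), but your proposed preliminary reduction leans on it again, so as written the argument does not address the stated generality. That said, in the cases where the cone condition does hold for some $v \in K$, your vertex-star decomposition plus Lemma~\ref{l:psubdiv} is a correct and genuinely different route from the paper's induction; what is missing is the general case.
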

\begin{proof}
    We proceed by induction on the number of simplices of $L \smallsetminus K$.
    The inductive claim is that we have a sequence of edge subdivisions which turns $L$ into $b(L,K)$ such that the following two conditions are satisfied:
    \begin{enumeratei}
        \item For any simplex $\gs \in L\smallsetminus K$ its barycenter is obtained as the midpoint of the edge between the barycenters of two opposite faces of $\gs$.
        \item If $\gt$ and $\gr$ are two intersecting faces of $\gs \in L\smallsetminus K $ which span $\gs$, then the barycenter of $\gs$ appears in the sequence no later than one of the barycenters of $\gt$, $\gr$.
    \end{enumeratei}
    In the base case $L=K$ and the partial subdivision is just $L$, so we are not subdividing anything.
    Now, assume $(L',K)$ is a pair with $L'\neq K$ and $K$ flag, let $\gs$ be a maximal simplex of $L' \smallsetminus K$, and let $L=L'-\gs$.
    Since $K$ is flag, there exists a facet of $\d\gs$ which is not contained in $K$, and hence its barycenter is a vertex in $b(L,K)$.
    Its opposite barycenter is just a vertex of $L$ and hence a vertex of $b(L,K)$, so we conclude that $b(\d\gs, \d\gs \cap K)$ contains a pair of barycenters of opposite faces of $\d\gs$.

    By induction there is a sequence of edge subdivisions of $L$ which turn the pair $(L,K)$ into $b(L,K)$.
    Now perform the same sequence in $L'$.
    This clearly does not produce any vertices in the interior of $\gs$, and at some point of the sequence we will create an edge going through the interior of the simplex between the barycenters of two faces $\gt$ and $\gr$ for the first time.
    We claim that these faces are opposite to each other.
    Assume that the barycenter of $\gr$ appears before $\gt$ in the sequence.
    Since the edge goes through the interior, $\gt$ and $\gr$ span $\gs$, so we need to show that they are disjoint.
    By condition (i) the barycenter of $\gt$ is the midpoint of the barycenters of opposite faces $\ga$ and $\gb$ of $\gt$, who appear before $\gt$.
    Let $\gd$ be the span of $\ga$ and $\gr$; notice that $\delta \subset \d\gs$ as before this point there were no edges in the interior of $\gs$.

    If $\ga$ and $\gr$ intersect, then by condition (ii) the barycenter of $\gd$ appears no later than one of the barycenters of $\ga$, $\gr$, and hence before the barycenter of $\gt$.
    Since $\gd$ and $\gb$ span $\gs$, the edge between their barycenters goes through the interior of $\gs$, contradicting our choice of $\gt$ and $\gr$.
    So $\ga$ and $\gr$ are disjoint, and similarly $\gb$ and $\gr$ are also disjoint.
    This proves the claim.

    Now we modify the sequence by inserting the subdivision of this edge immediately after it appeared.
    This subdivision produces the barycenter of $\gs$ and affects only the interior of $\gs$, producing a cone on the subdivision of $\d\gs$ at the previous stage.
    Hence, the subsequent subdivisions will further subdivide $L$ into $b(L,K)$, $\d\gs$ into $b(\d\gs, \d\gs \cap K)$, and $\gs$ into the cone on $b(\d\gs, \d\gs \cap K)$, and therefore the end result of the new sequence is $b(L',K)$.
    It is also clear that the new sequence satisfies conditions (i) and (ii).
\end{proof}
\begin{lemma}\label{l:twosubs}
    Suppose we have complexes $J < K < L$ with $J$ flag.
    Then $b(L,J)$ is an iterated edge subdivision of $b(L,K)$.
\end{lemma}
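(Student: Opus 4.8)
The plan is to deduce this directly from Lemmas~\ref{l:pies} and~\ref{l:iesind}. Since $J$ is flag and $J<K$, Lemma~\ref{l:pies} applied to the pair $(K,J)$ gives a sequence of edge subdivisions turning $K$ into $b(K,J)$. As $K$ is a full subcomplex of $b(L,K)$, this sequence induces an iterated edge subdivision of $b(L,K)$; call the result $M$. By Lemma~\ref{l:iesind} --- and this is exactly where fullness of $K$ enters --- $M$ depends only on $b(K,J)$, not on the ordering of the subdivided edges. It then remains to identify $M$ with $b(L,J)$.

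For that I would unwind $M$ simplex by simplex from the combinatorial descriptions. By the argument proving Lemma~\ref{l:iesind}, a simplex $\gs$ of $b(L,K)$ is subdivided in $M$ into the join $\gt'\ast(\gs-\gt)$, where $\gt=\gs\cap K$ and $\gt'$ is the subdivision of the face $\gt$ effected by the edge subdivisions of $K$, namely $\gt'=b(\gt,\gt\cap J)$. Now a simplex $\gs$ of $b(L,K)$ is a chain $\gt<\gs_1<\dots<\gs_m$ in $L$ with $\gt\in K$ (possibly $\gt=\emptyset$) and $\gs_i\notin K$, spanned by the vertices of $\gt$ together with the barycenters $b(\gs_1),\dots,b(\gs_m)$; so $\gs\cap K=\gt$ and $\gs-\gt$ is the chain $\gs_1<\dots<\gs_m$. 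A simplex of $b(\gt,\gt\cap J)$ is a chain $\gr_0<\dots<\gr_l$ of faces of $\gt$ with at most one $\gr_i$, necessarily $\gr_0$, lying in $J$, spanned by the vertices of $\gr_0$ and the barycenters $b(\gr_1),\dots,b(\gr_l)$. Joining these, a simplex of $M$ is spanned by the vertices of $\gr_0$ together with the barycenters of $\gr_1,\dots,\gr_l,\gs_1,\dots,\gs_m$, arising from a chain $\gr_0<\dots<\gr_l\subseteq\gt<\gs_1<\dots<\gs_m$ in $L$ in which only $\gr_0$ (if present) lies in $J$; collapsing $\gt$ out of the notation, this is just a chain of simplices of $L$ with at most one in $J$, the one in $J$ contributing its vertices and every other simplex contributing its barycenter --- precisely the combinatorial data of a simplex of $b(L,J)$. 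Conversely, every such chain arises this way: split it at the largest simplex lying in $K$ and take $\gt$ to be that simplex (or $\gt=\emptyset$ if no simplex of the chain lies in $K$). Hence $M=b(L,J)$, and $b(L,J)$ is an iterated edge subdivision of $b(L,K)$.

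The step I expect to be the main obstacle is this last identification, because $b(L,K)$ allows only a single simplex of $K$ per chain: the simplices of $b(L,J)$ that lie over $K$ --- those whose chains run through several nested simplices of $K$ --- are created only after performing the induced subdivision, and one must check that the join operation of Lemma~\ref{l:iesind} produces them with exactly the right vertex sets (the unique $J$-simplex of the chain contributing all its vertices, every other simplex contributing only its barycenter). Everything else is formal given Lemmas~\ref{l:pies} and~\ref{l:iesind}. One could alternatively argue by induction on the number of simplices of $K\smallsetminus J$, mimicking the proof of Lemma~\ref{l:pies}, but the route above is shorter.
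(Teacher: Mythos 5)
Your proposal is correct and follows essentially the same route as the paper: apply Lemma~\ref{l:pies} to the pair $(K,J)$, use fullness of $K$ in $b(L,K)$ together with the join decomposition $b(\gt,\gt\cap J)\ast(\gs-\gt)$, $\gt=\gs\cap K$, from (the argument of) Lemma~\ref{l:iesind}, and identify the result with $b(L,J)$. The only difference is that you spell out the chain-by-chain combinatorial identification that the paper asserts in one line, and your verification of it is accurate.
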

\begin{proof}
    By Lemma \ref{l:pies}, we know that $b(K,J)$ is an iterated edge subdivision of $K$.
    We claim that the induced iterated edge subdivision of $b(L,K)$ is $b(L,J)$.
    Since $K$ is a full subcomplex of $b(L,K)$, arguing as in Lemma~\ref{l:iesind}, the induced iterated edge subdivision of a simplex $\gs$ in $b(L,K)$ decomposes as a join $b(\gt, \gt \cap J)*(\gs-\gt)$ where $\gt =\gs \cap K$, which is precisely what $\gs$ gets subdivided into in $b(L,J)$.
\end{proof}
\begin{figure}
    \centering
    \begin{tikzpicture}[join=round,thick]

        \tikzstyle{deltaffill} = [color=black!70,fill=gray,fill opacity=0.7] \tikzstyle{deltafill} = [opacity=1, fill=lightgray!40,fill opacity=0.8] \tikzstyle{baryfill} = [fill=black] \tikzstyle{vertexfill} = [fill=white, color=white] \tikzset{ sub/.style= {color = black, draw, fill = white, inner sep = 0, minimum size=3pt, shape=circle }}

        \def\tetrax#1{
        \coordinate (p1) at (1,.25);
        \coordinate (p2) at (-.5,2);
        \coordinate (p3) at (-2.25,0);
        \coordinate (p4) at (0,-1);

        \foreach \x/\y in {1/2, 1/3, 1/4, 2/3, 2/4, 3/4}
        \coordinate (p\x\y) at (barycentric cs:p\x=1,p\y=1);

        \foreach \x/\y/\z in {1/2/3, 1/2/4, 1/3/4, 2/3/4}
        \coordinate (p\x\y\z) at (barycentric cs:p\x=1,p\y=1,p\z=1);

        \draw (p4)--(p3);
        \draw[red] (p2)--(p3);

        \draw[dashed,color=red!60] (p1)--(p3);

        \draw[color=red] (p1)--(p4);
        \node[above] at (p2){$v$};
        \node[below] at (p4) {$#1$}; }

        \def\tetrar#1{ \tetrax{#1}
        \fill [red, pattern = grid, pattern color = red, opacity = .25 ] (p1)--(p2)--(p3)--cycle;
        \fill [red, pattern = grid, pattern color = red, opacity = .25 ] (p1)--(p2)--(p4)--cycle;
        \draw[red] (p1)--(p2)--(p4);

        }
        \def\tetrab#1{ \tetrax{#1}
        \draw (p1)--(p2)--(p4); }
        \begin{scope}[xshift = 2cm] \tetrab{(v\d \gD^2, J)}
            \node[sub] at (p34){};
        \end{scope}
        \begin{scope}[xshift = 6cm] \tetrab{vb(\d \gD^2, K)}
            \draw (p34)--(p2);
            \node[sub] at (p234){};
        \end{scope}
        \begin{scope}[xshift = 10cm] \tetrar{b(v\d \gD^2, v K)}
            \draw (p34)--(p2);
            \draw (p3)--(p234)--(p4);
            \node[sub, color=red, fill=white] at (p24){};
        \end{scope}
        \begin{scope}[xshift = 0cm, yshift = -5cm] \tetrab{}
            \draw (p34)--(p2);
            \draw (p3)--(p234)--(p4);
            \draw (p234)--(p24)--(p1);
            \node[sub] at (p124){};
        \end{scope}
        \begin{scope}[xshift = 4cm, yshift = -5cm] \tetrab{}
            \draw (p34)--(p2);
            \draw (p3)--(p234)--(p4);
            \draw (p234)--(p24)--(p1);
            \draw (p2)--(p124)--(p4);
            \node[sub] at (p12){};
        \end{scope}
        \begin{scope}[xshift = 8cm, yshift = -5cm] \tetrab{}
            \draw (p34)--(p2);
            \draw (p3)--(p234)--(p4);
            \draw (p234)--(p24)--(p1);
            \draw (p2)--(p124)--(p4);
            \draw (p124)--(p12);
            \draw[dashed] (p12)--(p3);
            \node[sub] at (p123){};
        \end{scope}
        \begin{scope}[xshift = 12cm, yshift = -5cm] \tetrab{b(v\d \gD^2, J)}
            \draw (p34)--(p2);
            \draw (p3)--(p234)--(p4);
            \draw (p234)--(p24)--(p1);
            \draw (p2)--(p124)--(p4);
            \draw (p124)--(p12);
            \draw[dashed] (p12)--(p3);
            \draw[dashed] (p2)--(p123)--(p1);
        \end{scope}
    \end{tikzpicture}
    \caption{Our subdivision procedure turning the cone $vb(\d \Delta^2,K)$ into $b(v\d \Delta^2,J)$.
    Here, $J$ is the union of the three red edges, and $K=J \cap \Delta^2$.
    The first row depicts subdivisions of Lemma~\ref{l:psubdiv}, and the second --- of Lemma~\ref{l:twosubs}.
    Each edge marked by $\circ$ is subdivided in the next picture.
    }
\end{figure}
\begin{Theorem}\label{c:vanishingtomei}
    Let $J$ be a flag subcomplex of $\d\gD^{n}$.
    Then $b(\d\gD^{n}, J)$ can be constructed as an iterated edge subdivision of the suspension $Sb(\d \gt, J\cap \d\gt))$ for some facet $\gt$ of $\d\gD^{n}$.
\end{Theorem}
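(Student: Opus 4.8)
The plan is to single out a facet $\gt$ of $\d\gD^{n}$ that does not belong to $J$, cut $\d\gD^{n}$ along the link of the vertex $c$ opposite $\gt$, and then recognize the two pieces of $b(\d\gD^{n},J)$ as the two cones of the suspension $Sb(\d\gt,J\cap\d\gt)$: one of them on the nose, the other after a sequence of edge subdivisions furnished by Lemmas~\ref{l:psubdiv} and~\ref{l:twosubs}, as illustrated in the figures above. (The case $n=1$ is trivial, so assume $n\ge2$.)

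First, since $\d\gD^{n}$ is not flag while $J$ is, $J$ omits some edge and hence some facet; fix such a facet $\gt$, let $c$ be the opposite vertex, and recall $\d\gt=\Lk_{\d\gD^{n}}c\cong\d\gD^{n-1}$, $c\d\gt=\St_{\d\gD^{n}}c=\d\gD^{n}\smallsetminus\{\gt\}$, $\gt=\d\gD^{n}- c$. Put $K:=J\cap\d\gt$ and $M:=J\cap c\d\gt$. Because $\gt\notin J$, any clique of $J$ lying in $\d\gt$ or in $c\d\gt$ spans a simplex $\neq\gt$, so $K$ and $M$ are flag; one also checks $K\subseteq M\subseteq cK\subseteq c\d\gt$ (a simplex of $M$ either lies in $\d\gt$, hence in $K$, or contains $c$ and hence equals $c* \gr$ with $\gr\in K$). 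The splitting $\d\gD^{n}=c\d\gt\medcup_{\d\gt}\gt$ and the gluing lemma for relative barycentric subdivisions then give
\[
b(\d\gD^{n},J)=b(c\d\gt,M)\medcup_{b(\d\gt,K)}b(\gt,J\cap\gt).
\]
Since $\gt=\gD^{n-1}$ and $\gt\notin J$, the final stage of the relative subdivision of $(\gt,J\cap\gt)$ cones $b(\d\gt,K)$ off to the barycenter $v_{\gt}$, so $b(\gt,J\cap\gt)=v_{\gt}* b(\d\gt,K)$ --- precisely one of the two cones of $Sb(\d\gt,K)$.

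For the other cone I would proceed as in the figures above. Lemma~\ref{l:twosubs}, applied to $M\subseteq cK\subseteq c\d\gt$ (legitimate since $M$ is flag), shows that $b(c\d\gt,M)$ is an iterated edge subdivision of $b(c\d\gt,cK)$; and Lemma~\ref{l:psubdiv} shows that $b(c\d\gt,cK)$ is an iterated edge subdivision of $cb(\d\gt,K)=c* b(\d\gt,K)$. Composing, there is a sequence of edge subdivisions carrying $c* b(\d\gt,K)$ to $b(c\d\gt,M)$.

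The one point that needs genuine care is that this whole sequence is supported inside the cone $c* b(\d\gt,K)$, away from the equator $b(\d\gt,K)$ and from the opposite cone $v_{\gt}* b(\d\gt,K)$. For the subdivisions of Lemma~\ref{l:psubdiv} this is immediate, as they subdivide only edges joining $c$ to a barycenter and the open star of such an edge consists of simplices through $c$. For the subdivisions of Lemma~\ref{l:twosubs} one notes that they take place inside the full subcomplex $cK$ of $b(c\d\gt,cK)$, which meets $v_{\gt}* b(\d\gt,K)$ exactly in $K$, and that since $K\subseteq M$ the subcomplex $K$ --- in particular each of its edges --- is never subdivided; so neither $b(\d\gt,K)$ nor $v_{\gt}* b(\d\gt,K)$ is disturbed. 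Performing this sequence inside
\[
Sb(\d\gt,J\cap\d\gt)=\big(c* b(\d\gt,K)\big)\medcup_{b(\d\gt,K)}\big(v_{\gt}* b(\d\gt,K)\big),
\]
on the $c$-cone only, therefore yields $b(c\d\gt,M)\medcup_{b(\d\gt,K)}\big(v_{\gt}* b(\d\gt,K)\big)=b(\d\gD^{n},J)$, as desired. The anticipated obstacle is this localization bookkeeping --- confining all the moves of Lemmas~\ref{l:psubdiv}--\ref{l:twosubs} to the $c$-cone --- rather than any new idea; the choice of $\gt$ with $\gt\notin J$, forced by flagness of $J$, is exactly what keeps the intermediate complexes flag so that those lemmas apply.
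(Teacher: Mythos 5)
Your proposal is correct and follows essentially the same route as the paper's proof: pick a facet $\gt\notin J$ (equivalently $J\subseteq\St c$ for the opposite vertex $c$, so your $M$ is just $J$), split $b(\d\gD^{n},J)$ along $b(\d\gt,K)$ into $b(\St c,J)$ and the cone $v_{\gt}b(\d\gt,K)$, and apply Lemmas~\ref{l:psubdiv} and~\ref{l:twosubs} to realize $b(\St c,J)$ as an iterated edge subdivision of $c\,b(\d\gt,K)$. The only difference is that you spell out the localization bookkeeping (that all subdivided edges avoid the equator and the opposite cone), which the paper leaves implicit; your justification of it is sound.
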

\begin{proof}
    Since $J$ is flag, it is contained in the star of some vertex $v$ of $\d\gD^{n}$.
    Therefore, the barycenter $v_{\gt}$ of its opposite facet $\gt$ is in $b(\d\gD^{n}, J)$.
    Let $L=\d\gt$ and $K=J\cap \d\gt$.

    By Lemma~\ref{l:twosubs} applied to the triple $(v L, vK, J)$, $b(vL, J)$ is an iterated edge subdivision of $b(vL, vK)$, which by Lemma~\ref{l:psubdiv} is an iterated edge subdivision of $vb(L, K)$.
    Therefore,
    \begin{align*}
        b(\d\gD^{n}, J)=b(v L\cup_{L} v_{\gt} L, J) &= b(vL, J) \cup_{b(L, K)} v_{\gt} b(L, K) \\
        \intertext{is an iterated edge subdivision of } Sb(\d \gt, J\cap \d\gt)=Sb(L,K) &=v b(L, K) \cup_{b(L, K)} v_{\gt} b(L, K).
    \end{align*}
\end{proof}
\begin{corollary}\label{c:octahedron}
    Let $K$ be a flag subcomplex of $\d \Delta^{n}$.
    Then $b(\d\gD^{n}, K)$ is an iterated edge subdivision of the boundary $O^n$ of the $n$-octahedron.
\end{corollary}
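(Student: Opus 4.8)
The plan is to prove the statement by induction on $n$, peeling off one suspension coordinate at each step via Theorem~\ref{c:vanishingtomei}. The base case $n=1$ is immediate: $\d\gD^{1}$ and $O^{1}$ are both a pair of points, and for any flag $K$ one has $b(\d\gD^{1},K)=\d\gD^{1}=O^{1}$, so nothing is subdivided.

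For the inductive step I would take a flag subcomplex $K$ of $\d\gD^{n}$ and apply Theorem~\ref{c:vanishingtomei} to realize $b(\d\gD^{n},K)$ as an iterated edge subdivision of $Sb(\d\gt,K\cap\d\gt)$ for some facet $\gt$ of $\d\gD^{n}$. The next step is to verify that $K\cap\d\gt$ is again a flag subcomplex of $\d\gt\cong\d\gD^{n-1}$: since $\gt$ is a full subcomplex of $\d\gD^{n}$, the intersection $K\cap\d\gt$ is the full subcomplex of $K$ spanned by the vertices of $K$ lying in $\gt$, hence is flag. The inductive hypothesis then gives that $b(\d\gt,K\cap\d\gt)$ is an iterated edge subdivision of $O^{n-1}$.

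To finish I would promote this along the suspension: writing $O^{n}=O^{n-1}\ast S^{0}$ and using that subdividing edges inside the join factor $O^{n-1}$ preserves the join structure (the remark preceding Lemma~\ref{l:iesind}, with Lemma~\ref{l:iesind} ensuring the induced subdivision is well defined), the induced iterated edge subdivision of $O^{n}$ is exactly $b(\d\gt,K\cap\d\gt)\ast S^{0}=Sb(\d\gt,K\cap\d\gt)$. Thus $Sb(\d\gt,K\cap\d\gt)$ is an iterated edge subdivision of $O^{n}$. Concatenating the corresponding sequence of edge subdivisions with the one provided by Theorem~\ref{c:vanishingtomei} --- iterated edge subdivisions obviously compose --- exhibits $b(\d\gD^{n},K)$ as an iterated edge subdivision of $O^{n}$, completing the induction.

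The proof is essentially bookkeeping once Theorem~\ref{c:vanishingtomei} is in hand, and the only point where I expect to have to be careful is the compatibility of edge subdivision with suspension: one must know that the suspension of an iterated edge subdivision of $X$ is an iterated edge subdivision of the suspension of $X$, so that the induction survives the suspension operation. This is exactly the behaviour of edge subdivisions on joins recorded before Lemma~\ref{l:iesind}, so I do not foresee a genuine obstacle.
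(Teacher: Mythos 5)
Your proposal is correct and is essentially the paper's own argument: induction on $n$, with Theorem~\ref{c:vanishingtomei} supplying the reduction to $Sb(\d\gt, K\cap\d\gt)$ and the join-compatibility of edge subdivisions (the remark before Lemma~\ref{l:iesind}) promoting the inductive subdivision of $O^{n-1}$ to its suspension $O^n$. The only difference is expository ordering (the paper picks the facet and applies induction before invoking the theorem), plus your explicit check that $K\cap\d\gt$ is flag, which the paper leaves implicit.
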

\begin{proof}
    Let $\Delta^{n-1}$ be a face of $\Delta^{n}$ that is not contained in $K$.
    By induction, $b(\d\gD^{n-1}, K \cap \d \Delta^{n-1})$ is an iterated edge subdivision of $O^{n-1}$; hence the induced edge subdivision of $O^n$ is $Sb(\d\gD^{n-1}, K \cap \d \Delta^{n-1})$.
    Now apply Theorem \ref{c:vanishingtomei}.
\end{proof}
\begin{remark}
    More general versions of Lemmas \ref{l:pies} and \ref{l:twosubs} and Corollary \ref{c:octahedron} follow from earlier work of Feichtner--M\"{u}ller \cite{fm04}, Postnikov--Reiner--Williams \cite{pnw08}, and Volodin \cite{v10}.
    Given a simplicial complex $L$, a \emph{building set} is a collection of simplices $\mathcal{B}$ containing $L^{(0)}$ such that if $\gs, \gt \in \mathcal{B}$ with $\gs \cup \gt \in L$ then $\gs \cup \gt \in \mathcal{B}$ if $\gs \cap \gt \ne \emptyset$.
    Any building set determines a corresponding nested set complex, which ends up being a subdivision of $L$ where new vertices correspond to elements in $\mathcal{B} \smallsetminus L^{(0)}$.
    If $K \subset L$ is a subcomplex, it is easy to see that the vertices of $L$ together with all simplices not contained in $K$ forms a building set, and the corresponding nested set complex is isomorphic to $b(L, K)$.

    Feichtner and M\"{u}ller \cite{fm04}*{Theorem 4.2} show that if $\mathcal{B'} \subset \mathcal{B}$ are two building sets, then the nested complex for $\mathcal{B}$ is an iterated stellar subdivision of the nested complex for $\mathcal{B'}$.
    Volodin \cite{v10}*{Lemma 6} shows that if the nested set complex for $B$ is flag then this iterated stellar subdivision can be chosen to be an iterated edge subdivision.
    (Volodin only considers $L = \d \Delta^n$ and assumes both nested set complexes are flag, but his argument goes through in this more general setup.) Finally, Postnikov, Reiner and Williams \cite{pnw08}*{Section 7} show that any building set for $L = \d \Delta^n$ with flag nested set complex contains a building set whose nested set complex is isomorphic to $O^n$.

    The dual polytopes of nested set complexes associated to building sets for $\d \Delta^n$ are called \emph{nestohedra}, they include permutohedra and associahedra, see \cite{pnw08}*{Section 10} for these and some other examples.
\end{remark}

We can now complete the proof of the first part of our main theorem from the introduction (which is the special case below when $K = \emptyset$).
\begin{Theorem}\label{t:vanishingtomei}
    Let $K$ be a flag subcomplex of $\gD^{2n+1}$.
    Then $b_\ast^{(2)}(W_{b(\d \Delta^{2n+1}, K)}; \F) = 0$.
\end{Theorem}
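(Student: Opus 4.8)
The plan is to reduce the combinatorics to the octahedron using Corollary~\ref{c:octahedron} and then feed the outcome into Corollary~\ref{c:concentrated+greater}\eqref{i:acyclic}; the only genuinely new input is the elementary fact that the Davis complexes of octahedra are $\F$-$L^2$-acyclic. First I would dispose of a boundary case: since $\d\Delta^{2n+1}$ is exactly the set of proper faces of $\Delta^{2n+1}$, the only flag subcomplex of $\Delta^{2n+1}$ not contained in $\d\Delta^{2n+1}$ is $\Delta^{2n+1}$ itself, and for it $b(\d\Delta^{2n+1},K)=\d\Delta^{2n+1}$ is not flag when $n\ge1$; so we may assume that $K$ is a flag subcomplex of $\d\Delta^{2n+1}$. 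Corollary~\ref{c:octahedron} then tells us that $L':=b(\d\Delta^{2n+1},K)$ is an iterated edge subdivision of $O^{2n+1}$, the boundary of the $(2n+1)$-octahedron, which is the $(2n+1)$-fold join of $S^0$ with itself.

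The second step is to show that $W_{O^m}$ is $\F$-$L^2$-acyclic for every $m\ge1$. Since $O^m$ is the $m$-fold join of $S^0$, we have $W_{O^m}\cong W_{S^0}^{m}$ and $\Sigma_{O^m}\cong\Sigma_{S^0}^{m}$. Now $W_{S^0}\cong D_\infty$ is virtually $\zz$ acting on $\Sigma_{S^0}\cong\rr$, so the cellular chain complex of $\rr$ (as a free $\zz$-complex) becomes acyclic after tensoring up to $\cD_{\F\zz}=\F(t)$, as $t-1$ is a unit there; hence $b_\ast^{(2)}(W_{S^0};\F)=0$, and the K\"unneth Formula then gives $b_\ast^{(2)}(W_{O^m};\F)=0$.

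The third step is to check the hypothesis of Corollary~\ref{c:concentrated+greater}\eqref{i:acyclic} for $L=O^{2n+1}$. In $O^{2n+1}$ the link of a $j$-simplex $\gs$ is the join of the $S^0$-factors disjoint from $\gs$, that is, $\Lk_{O^{2n+1}}\gs^j=O^{2n-j}$, with the convention $\Lk_{O^{2n+1}}\emptyset=O^{2n+1}$. For an odd-dimensional simplex $\gs^{2k-1}$ (including $\emptyset$, the case $k=0$) this link is $O^{2n+1-2k}$, and since $2k-1\le\dim O^{2n+1}=2n$ we get $k\le n$, hence $2n+1-2k\ge1$; in particular the degenerate complex $O^0=\emptyset$, whose RACG is trivial and hence not $\F$-$L^2$-acyclic, never occurs. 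By the second step, $b_\ast^{(2)}(W_{\Lk_{O^{2n+1}}\gs^{2k-1}};\F)=0$ for every odd-dimensional simplex of $O^{2n+1}$, and so Corollary~\ref{c:concentrated+greater}\eqref{i:acyclic} gives $b_\ast^{(2)}(W_{L'};\F)=0$, as desired.

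I do not expect a serious obstacle once Corollaries~\ref{c:octahedron} and~\ref{c:concentrated+greater} are in hand; the single delicate point is the parity bookkeeping in the third step, and this is exactly where the oddness of $2n+1$ enters. For $\d\Delta^{2n}$ the same argument fails: a top-dimensional simplex of $O^{2n}$ is odd-dimensional (dimension $2n-1$) with link $O^0=\emptyset$, whose RACG is not $\F$-$L^2$-acyclic --- consistent with the earlier statement that subdividing $\d\Delta^{2n}$ only yields $b_{\ne n}^{(2)}(W_{b\d\Delta^{2n}};\F)=0$.
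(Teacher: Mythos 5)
Your proposal is correct and follows essentially the same route as the paper, whose proof is exactly the one-line deduction from Corollary~\ref{c:octahedron} together with Corollary~\ref{c:concentrated+greater}\eqref{i:acyclic}; the points you spell out (that links of odd-dimensional simplices of $O^{2n+1}$, including $\emptyset$, are octahedra $O^{\ge 1}$ and hence $\F$-$L^2$-acyclic as suspensions/products) are precisely what the paper treats as immediate. Your explicit dismissal of the boundary case $K=\Delta^{2n+1}$ is a harmless extra check.
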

\begin{proof}
    This follows immediately from Corollary \ref{c:octahedron} and Corollary \ref{c:concentrated+greater} \eqref{i:acyclic}.
\end{proof}

Using Theorem \ref{t:vanishingtomei}, we get the following general statement on relative barycentric subdivisions.
\begin{Theorem}\label{t:partialsinger}
    Suppose $L$ is a complex of dimension $\leq 2n-1$, and $K$ is a flag subcomplex.
    Then
    \[
    b_{i}^{(2)}(W_{b(L,K)}; \F) = b_{i}^{(2)}(W_K; \F) \text{ for } i> n,
    \]
    and
    \[
    b_{n}^{(2)}(W_{b(L,K)}; \F) \geq b_{n}^{(2)}(W_K; \F).
    \]
\end{Theorem}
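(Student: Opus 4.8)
The plan is to pass from $b(L,K)$ down to $K$ by deleting, one vertex at a time, all the vertices of $b(L,K)$ that do not lie in $K$, controlling the $\F$-$L^{2}$-Betti numbers at each stage with Lemma~\ref{l:v-remove}. The vertices of $b(L,K)$ not in $K$ are exactly the barycenters $v_\gs$ of the simplices $\gs\in L\smallsetminus K$, and since $K$ is the full subcomplex of $b(L,K)$ spanned by $K^{(0)}$, successively deleting all the $v_\gs$ (each time passing to the full subcomplex on the remaining vertices, which stays flag because $b(L,K)$ is flag and full subcomplexes of flag complexes are flag) terminates precisely at $K$.

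The decisive choice is the deletion order: I would delete the $v_\gs$ in order of \emph{decreasing} $\dim\gs$. By Lemma~\ref{l:links}, $\Lk_{b(L,K)}v_\gs=b(\d\gs,\d\gs\cap K)\ast b\Lk_L\gs$; the vertices of the factor $b\Lk_L\gs$ are the barycenters $v_\tau$ with $\gs\subsetneq\tau$, which have strictly larger dimension than $\gs$ and hence have already been deleted, whereas the vertices of $b(\d\gs,\d\gs\cap K)$ (barycenters of proper faces of $\gs$ not in $K$, and vertices of $\gs$ lying in $K$) have not; moreover no other barycenter $v_\tau$ with $\dim\tau=\dim\gs$ is adjacent to $v_\gs$. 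So at the moment $v_\gs$ is deleted, its link in the current complex is exactly $b(\d\Delta^{d},\d\gs\cap K)$ with $d=\dim\gs$, where $\d\gs\cap K$ is a flag subcomplex of $\d\gs=\d\Delta^{d}$ (flagness of $K$ forces every clique of $\d\gs\cap K$ to span a simplex of $K$, necessarily a proper face of $\gs$ since $\gs\notin K$).

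It remains to check that $b_{\ell}^{(2)}(W_{b(\d\Delta^{d},\d\gs\cap K)};\F)=0$ for all $\ell\ge n$. Here I would invoke Corollary~\ref{c:octahedron}: $b(\d\Delta^{d},\d\gs\cap K)$ is an iterated edge subdivision of the boundary $O^{d}$ of the $d$-octahedron, whose odd-dimensional simplex links are joins of copies of $S^{0}$, empty exactly on the top-dimensional simplices (which forces $d$ even). Since $b_{*}^{(2)}(W_{S^{0}};\F)=0$, the K\"unneth formula makes all these octahedron links $\F$-$L^{2}$-acyclic except the empty one, for which only $b_{0}^{(2)}$ survives; feeding this into Corollary~\ref{c:concentrated+greater}\eqref{i:greater} with threshold $\lfloor d/2\rfloor$ yields $b_{>\lfloor d/2\rfloor}^{(2)}(W_{b(\d\Delta^{d},\d\gs\cap K)};\F)=0$. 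Because $d=\dim\gs\le\dim L\le 2n-1$ we have $\lfloor d/2\rfloor\le n-1<n$, so the link is $\F$-$L^{2}$-acyclic in every degree $\ge n$.

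The rest is bookkeeping: for each deletion Lemma~\ref{l:v-remove} gives $b_{i}^{(2)}(W_{\text{after}};\F)=b_{i}^{(2)}(W_{\text{before}};\F)$ for every $i>n$ (both $i$ and $i-1$ lie in the acyclic range $\ge n$, so both hypotheses of the equality case hold) and $b_{n}^{(2)}(W_{\text{after}};\F)\le b_{n}^{(2)}(W_{\text{before}};\F)$; chaining these from $b(L,K)$ down to $K$ gives the asserted equalities for $i>n$ and the asserted inequality for $i=n$. The step I expect to be the real obstacle is pinning down the acyclicity range of the links $b(\d\Delta^{d},\d\gs\cap K)$: for $d$ even this complex is only acyclic away from its middle dimension $d/2$, and the argument succeeds only because the hypothesis $\dim L\le 2n-1$ keeps that middle dimension strictly below $n$. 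The combinatorial identification of the links, the flagness of the intermediate complexes, and the fact that the deletion terminates at $K$ are all routine.
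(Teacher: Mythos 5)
Your proposal is correct, and while it follows the same basic strategy as the paper --- strip the barycenters of simplices of $L\smallsetminus K$ one at a time and control each deletion with Lemma~\ref{l:v-remove}, using the flagness of $\d\gs\cap K$ and the edge-subdivision machinery to handle the links --- the organization is genuinely different. The paper works in two passes: it first removes the barycenters of \emph{odd}-dimensional simplices in decreasing dimension order, where the link factor $b(\d\gs,\d\gs\cap K)$ is fully $\F$-$L^2$-acyclic by Theorem~\ref{t:vanishingtomei} and hence all Betti numbers are preserved, and then removes the \emph{even}-dimensional barycenters in increasing dimension order, controlling those links by an elementary dimension count (the residual link is a join of the finite-type simplex $\gt\cap K$ with a complex of dimension at most $n-2-\dim\gt/2$). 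You instead make a single decreasing-dimension sweep over all barycenters and prove the uniform estimate $b^{(2)}_{>\lfloor d/2\rfloor}(W_{b(\d\Delta^{d},J)};\F)=0$ for every flag $J\subseteq\d\Delta^{d}$, by combining Corollary~\ref{c:octahedron} with Corollary~\ref{c:concentrated+greater}\eqref{i:greater} applied to $O^{d}$ (your verification of its hypotheses, including the empty links of top simplices when $d$ is even, is right); for odd $d$ this recovers Theorem~\ref{t:vanishingtomei}, and for even $d$ it replaces the paper's dimension count. Your identification of the link at deletion time as exactly $b(\d\gs,\d\gs\cap K)$ (same-dimension barycenters are non-adjacent, higher ones already gone, and the factor is full in the link by Lemma~\ref{l:links}) is also correct, and the bookkeeping with $\lfloor d/2\rfloor\le n-1$ gives precisely the equality for $i>n$ and the inequality at $i=n$. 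The trade-off: your argument is more uniform and avoids the somewhat delicate description of the residual links in the paper's second pass, at the cost of invoking the octahedron machinery also for even-dimensional barycenters, where the paper gets by with an elementary dimension bound.
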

\begin{proof}
    The barycenter of an odd-dimensional simplex $\gs$ in $L \smallsetminus K$ has link equal to $b(\d\gs, \d\gs \cap K) \ast b\Lk_{ L}\gs$ by Lemma \ref{l:links}.
    Since $K$ is flag, $\d \gs \cap K$ is flag, so Theorem \ref{t:vanishingtomei} guarantees that the first factor is $\F$-$L^2$-acyclic, hence the link is $\F$-$L^2$-acyclic by the K\"{u}nneth formula, so we can remove these vertices from $b(L, K)$ without changing $b_{*}^{(2)}(\cdot ; \F)$.
    Furthermore, if we remove the barycenters of odd-dimensional simplices in order of decreasing dimension, then the first factor in the link is unchanged at each stage, so by Lemma~\ref{l:v-remove} we can remove all the odd-dimensional barycenters.

    At this point, we have a subcomplex of $b(L,K)$ spanned by the vertices of $K$ and the even-dimensional barycenters of $L \smallsetminus K$.
    We now remove the even-dimensional barycenters in order of increasing dimension to obtain $K$.

    For a $0$-dimensional barycenter $v$ (a vertex of $L\smallsetminus K$), the new link is a subcomplex of $b\Lk_{ L}v$ spanned by the even-dimensional barycenters of dimension $\ge 2$.
    Therefore, this subcomplex is at most $(n-2)$-dimensional, hence the link has trivial $b_i^{(2)}(\cdot; \F)$ for $i > n - 1 $, hence removing these vertices does not affect $b_{>n}^{(2)}(\cdot ; \F)$ and decreases $b_{n}^{(2)}(\cdot ; \F)$.
    Now, if $v_\gt$ is an even-dimensional barycenter of a simplex $\gt$ with $\dim \gt \geq 2$, in our procedure we have removed all barycenters of faces of $\gt$ except those in $K$.
    Therefore, the link of $v$ decomposes as a join of $\gt' = \gt \cap K$ with a subcomplex of $b\Lk_{ L}\gt$ spanned by even-dimensional barycenters.
    This subcomplex is at most $(n-2 - \dim \gt/2)$-dimensional, and, since $W_{\gt'}$ is finite, removing these vertices again does not affect $b_{\geq n}^{(2)}(\cdot ;\F)$.
\end{proof}

Combining Theorem \ref{t:partialsinger} with Poincar\'e duality proves a more general version of the second part of our main theorem from the introduction:
\begin{corollary}\label{c:vanishingbs}
    Suppose $L$ is a barycentric subdivision of a triangulation of $S^{n-1}$.
    \begin{enumerate1}
        \item If $n$ is even, then $b_{\ast}^{(2)}(W_{L}; \F) = 0$ for $\ast \ne \frac{n}{2}$.
        \item If $n$ is odd then $b_{\ast}^{(2)}(W_{L}; \F) = 0$ for $\ast \ne \frac{n-1}{2}, \frac{n+1}{2}$.
    \end{enumerate1}
\end{corollary}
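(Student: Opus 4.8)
The plan is to read off the vanishing of $b^{(2)}_\ast(W_L;\F)$ \emph{above} the middle dimension from Theorem~\ref{t:partialsinger}, and then to transport it \emph{below} the middle dimension by Poincar\'e duality; the hypothesis that $L$ triangulates a sphere enters only through the latter step. Concretely, write $L = b(T)$ for a triangulation $T$ of $S^{n-1}$, so that $\dim T = n-1$ and $L$ is a flag triangulation of $S^{n-1}$; in particular $\Sigma_L$ is a contractible $n$-manifold and $P_L = \Sigma_L/C_L$ is a closed aspherical $n$-manifold. Now apply Theorem~\ref{t:partialsinger} with its complex taken to be $T$, its subcomplex taken to be $\emptyset$ (so that $b(T,\emptyset) = b(T) = L$), and its dimension parameter taken to be $m := \lceil n/2\rceil$. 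The hypothesis is satisfied because $\dim T = n-1 \le 2m-1$, with equality when $n$ is even and $2m-1 = n$ when $n$ is odd. Since $W_\emptyset$ is trivial we have $b^{(2)}_i(W_\emptyset;\F) = 0$ for all $i \ge 1$, and $m \ge 1$, so the theorem gives
\[
b^{(2)}_i(W_L;\F) = 0 \qquad\text{for all } i > \lceil n/2\rceil .
\]

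Next I would invoke Poincar\'e duality for the $\F$-$L^2$-Betti numbers of the closed aspherical manifold $P_L$: since $\Sigma_L$ is a contractible $n$-manifold carrying a free cocompact $C_L$-action, one has $b^{(2)}_i(W_L;\F) = b^{(2)}_{n-i}(W_L;\F)$ (orientability is immaterial, as twisting by a rank-one local system does not change dimensions over a skew field). Combined with the previous step this forces $b^{(2)}_j(W_L;\F) = 0$ also for all $j < n - \lceil n/2\rceil = \lfloor n/2\rfloor$. Assembling the two ranges finishes the proof: if $n$ is even then $\lceil n/2\rceil = \lfloor n/2\rfloor = n/2$, so $b^{(2)}_j(W_L;\F) = 0$ for every $j \ne n/2$; if $n$ is odd then $\lceil n/2\rceil = (n+1)/2$ and $\lfloor n/2\rfloor = (n-1)/2$, so $b^{(2)}_j(W_L;\F) = 0$ for every $j \notin \{(n-1)/2, (n+1)/2\}$, which are exactly the two cases of the corollary.

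I do not expect a genuine obstacle once Theorem~\ref{t:partialsinger} is in hand: the argument is a short parity computation, and its only external ingredient is Poincar\'e duality. The point worth flagging is why the sphere hypothesis is needed at all. Theorem~\ref{t:partialsinger} by itself only controls Betti numbers above the middle dimension --- for a general flag complex $L$ of dimension $\le 2n-1$ there is no reason for vanishing below the middle --- and it is precisely the fact that $L$ triangulates a sphere, so that $\Sigma_L$ is a contractible manifold admitting Poincar\'e duality, that lets us reflect the vanishing across the middle dimension.
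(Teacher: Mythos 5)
Your proof is correct and takes essentially the same route as the paper, which obtains the corollary precisely by applying Theorem~\ref{t:partialsinger} with $K=\emptyset$ (giving vanishing above $\lceil n/2\rceil$) and then reflecting by Poincar\'e duality. The only cosmetic caveat is that for a non-PL triangulation $\Sigma_L$ is in general only a homology $n$-manifold rather than a manifold, but, as the paper's subsequent remark notes, Poincar\'e duality still applies, so your parity computation goes through unchanged.
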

\begin{remark}
    Note that in Corollary \ref{c:vanishingbs} we make no assumptions on the triangulation of $S^n$, in particular it does not have to be piecewise-linear.
    In fact, if we start with a triangulation of a generalized homology sphere $GHS^n$ (by definition a homology $n$-manifold with the same homology as $S^n$), then the Davis complex is a homology $n$-manifold and Poincar\'e duality combined with Theorem \ref{t:partialsinger} implies the same vanishing result.
\end{remark}

\section{Torsion growth}\label{s:torsion}

In this section we give explicit counterexamples to the Torsion concentration conjecture from the introduction.

\subsection{Universal coefficients and torsion growth}

Given a group $G$, and a residual chain $G_k$ of finite index normal subgroups, let
\[
t_i^{(2)}(G) := \limsup \frac{\logtor H_{i}(G_k;\zz)} {[G:G_k]}.
\]
This may depend on the chain, but we omit it from the notation.

We will also let $t_i(G)_p := \dim_{\F_p} \Tor( H_i(G; \zz); \Fp)$ denote the number of $\zz_{p^k}$-summands in $H_i(G; \zz)$, and set
\[
t_i^{(2)}(G)_p := \limsup \frac{t_i(G_{k})_p} {[G:G_k]}.
\]
Note that
\[
t_i^{(2)}(G) \ge \max_{p} t_i^{(2)}(G)_p \log p.
\]

Recall that $\mathcal{P}(G)$ denotes the poset of finite index normal subgroups $H < G$ with $G/H$ a $p$-group.
\begin{lemma}\label{l:torsion}
    Suppose that $G$ is residually torsion-free nilpotent, and $b_{n+1}^{(2)}(G; \F_p) > b_{n+1}^{(2)}(G, \Q)$.
    Then, for any normal residual chain $G_k \nsgp G$ of finite index subgroups,
    \[
    t_n^{(2)}(G) + t_{n+1}^{(2)}(G) > 0.
    \]
    If additionally we have $b_{n+2}^{(2)}(G; \F_p) = 0$, then
    \[
    t_{n}^{(2)}(G) > 0
    \]
    for any cofinal chain of subgroups in $\mathcal{P}(G)$.\footnote{This is not just vacuously true: cofinal chains in $\mathcal{P}(G)$ do exist.
    For instance, one can take $G_k$ to be the intersection of all subgroups of index $p^i$ for $i \le k$.}
\end{lemma}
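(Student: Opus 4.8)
The plan is to deduce both parts from the approximation results of Section 2 together with the universal coefficient theorem relating $\F_p$-Betti numbers of the finite quotients to their rational Betti numbers and torsion. First I would record the universal coefficient estimate: for each $k$,
\[
b_i(G_k; \F_p) = b_i(G_k; \Q) + t_i(G_k)_p + t_{i-1}(G_k)_p,
\]
which upon dividing by $[G:G_k]$, taking $\limsup$, and using L\"uck approximation (identifying $\lim b_i(G_k;\Q)/[G:G_k]$ with $b_i^{(2)}(G;\Q)$) gives
\[
\limsup_k \frac{b_i(G_k;\F_p)}{[G:G_k]} \le b_i^{(2)}(G;\Q) + t_i^{(2)}(G)_p + t_{i-1}^{(2)}(G)_p.
\]
On the other hand, since $G$ is residually torsion-free nilpotent, Theorem~\ref{t:binf} (or Theorem~\ref{t:bpinf} for the cofinal-chain refinement) gives $b_{n+1}^{(2)}(G;\F_p) = \inf_k b_{n+1}(G_k;\F_p)/[G:G_k] \le \liminf_k b_{n+1}(G_k;\F_p)/[G:G_k]$. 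Combining with the hypothesis $b_{n+1}^{(2)}(G;\F_p) > b_{n+1}^{(2)}(G;\Q)$ forces $t_{n+1}^{(2)}(G)_p + t_{n}^{(2)}(G)_p > 0$, and since $t_i^{(2)}(G) \ge t_i^{(2)}(G)_p \log p$, this yields $t_n^{(2)}(G) + t_{n+1}^{(2)}(G) > 0$, proving the first assertion.

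For the second assertion I would additionally invoke $b_{n+2}^{(2)}(G;\F_p) = 0$ to kill the $t_{n+1}^{(2)}(G)_p$ contribution. Here one must be careful to use Theorem~\ref{t:bpinf}: along a cofinal chain $G_k$ in $\mathcal P(G)$ we have $b_{n+2}^{(2)}(G;\F_p) = \inf_k b_{n+2}(G_k;\F_p)/[G:G_k] = \lim_k b_{n+2}(G_k;\F_p)/[G:G_k]$, so this limit is $0$. Applying the universal coefficient identity in degree $n+2$,
\[
b_{n+2}(G_k;\F_p) = b_{n+2}(G_k;\Q) + t_{n+2}(G_k)_p + t_{n+1}(G_k)_p \ge t_{n+1}(G_k)_p,
\]
dividing by $[G:G_k]$ and taking $\limsup$ gives $t_{n+1}^{(2)}(G)_p = 0$. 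Feeding this back into the degree-$(n+1)$ inequality from the first part, the surviving term is $t_n^{(2)}(G)_p$, so $t_n^{(2)}(G)_p > 0$ and hence $t_n^{(2)}(G) \ge t_n^{(2)}(G)_p \log p > 0$ along any cofinal chain in $\mathcal P(G)$.

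The main subtlety — what I'd treat most carefully rather than the routine algebra — is matching the two sides of the comparison over the \emph{same} chain: Theorem~\ref{t:binf} expresses $b^{(2)}_*(G;\F_p)$ as an infimum over all finite-index subgroups, whereas $t_i^{(2)}(G)$ is a $\limsup$ along a prescribed residual chain, and L\"uck approximation for $\Q$-coefficients holds along arbitrary residual chains. For the first part the infimum-vs-chain mismatch is harmless because an infimum is bounded above by any subsequential limit along the chain, so the inequality points the right way. For the second part the cofinality hypothesis is exactly what upgrades the infimum to an honest limit along the chain via Theorem~\ref{t:bpinf} and the monotonicity from \cite{blls14}, so that the degree-$(n+2)$ vanishing can actually be used; this is why the statement restricts to cofinal chains in $\mathcal P(G)$, and I would make sure the proof explicitly passes through Theorem~\ref{t:bpinf} there.
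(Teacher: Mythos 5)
Your proposal is correct and follows essentially the same route as the paper's proof: the universal coefficient identity $t_n(G_k)_p+t_{n+1}(G_k)_p=b_{n+1}(G_k;\F_p)-b_{n+1}(G_k;\Q)$ combined with L\"uck approximation and Theorem~\ref{t:binf} for the first claim, and Theorem~\ref{t:bpinf} along a cofinal chain in $\mathcal{P}(G)$ plus the degree-$(n+2)$ universal coefficient bound to kill $t_{n+1}^{(2)}(G)_p$ for the second. Your explicit attention to the infimum-versus-chain issue is exactly the point the paper handles by noting that a cofinal chain in $\mathcal{P}(G)$ is itself a normal residual chain (since $G$ is residually $p$-finite), so the first part applies to it as well.
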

\begin{proof}
    For any normal residual chain of finite index subgroups, L\"{u}ck's approximation theorem implies $b_{n+1}^{(2)}(G; \Q) = \lim_k \frac{b_{n+1}(G_k; \Q)} {[G:G_k]}$, and Theorem~\ref{t:binf} implies $b_{n+1}^{(2)}(G; \F_p) \le \frac{b_{n+1}(G_k; \F_p)}{[G:G_k]}$.
    The universal coefficient theorem gives us that
    \[
    t_n(G_{k})_p + t_{n+1}(G_{k})_p = b_{n+1}(G_{k};\Fp) - b_{n+1}(G_{k};\Q).
    \]
    Hence, taking $\limsup$ and using the assumption gives $t_n^{(2)}(G)_p + t_{n+1}^{(2)}(G)_p > 0$, and therefore $ t_n^{(2)}(G) + t_{n+1}^{(2)}(G) > 0$.

    Now, let $\{G_{k}\}$ be a cofinal chain in $\mathcal{P}(G)$.
    Note that $\{G_{k}\}$ is residual, since $G$ is residually $p$-finite.
    Assuming that $b_{n+2}^{(2)}(G; \F_p) = 0$, Theorem~\ref{t:bpinf} gives $\lim \frac{b_{n+2}(G_k; \F_p)} {[G:G_k]} = 0$.
    Then by the universal coefficient theorem in degree $n+2$, $t_{n+1}^{(2)}(G)_p = 0$, and therefore $t_{n}^{(2)}(G)_p > 0$, hence we get $t_{n}^{(2)}(G)> 0$.
\end{proof}

Combining this lemma with Theorem~\ref{t:partialsinger}, and noting that $b_{i}^{(2)}(W_K; \F) =0 $ for $i> \dim K +1$ gives the following corollary:
\begin{corollary}\label{c:torsion}
    Suppose $L$ is a complex of dimension $\leq 2n-1$, and $K$ is a flag subcomplex of dimension $n$ with $b_{n+1}^{(2)}(W_K; \Fp) > b_{n+1}^{(2)}(W_K; \Q)=0$.
    Then
    \begin{align*}
        b_{>n}^{(2)}(W_{b(L,K)}; \Q) &=0, \\
        b_{>n+1}^{(2)}(W_{b(L,K)}; \Fp) &=0, \\
        b_{n+1}^{(2)}(W_{b(L,K)}; \Fp) &>0, \\
        \intertext{and, for any cofinal chain in $\mathcal{P}(W_{b(L,K)})$,} t_{n}^{(2)}(W_{b(L,K)}) &> 0.
    \end{align*}
\end{corollary}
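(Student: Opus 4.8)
The plan is to read off all four conclusions directly by combining Theorem~\ref{t:partialsinger} with Lemma~\ref{l:torsion}. The only extra ingredient is the observation that $W_K$ acts on the $(n+1)$-dimensional Davis complex $\Sigma_K$, so $b_i^{(2)}(W_K;\F)=0$ for every $i>n+1$ and every field $\F$. First I would establish the three statements about Betti numbers. Since $\dim L\le 2n-1$ and $K$ is flag, Theorem~\ref{t:partialsinger} applies to the pair $(L,K)$. Taking $\F=\Q$ gives $b_i^{(2)}(W_{b(L,K)};\Q)=b_i^{(2)}(W_K;\Q)$ for all $i>n$: for $i=n+1$ the right-hand side is $b_{n+1}^{(2)}(W_K;\Q)=0$ by hypothesis, and for $i>n+1$ it vanishes by the dimension bound, so $b_{>n}^{(2)}(W_{b(L,K)};\Q)=0$. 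Taking $\F=\Fp$ gives $b_i^{(2)}(W_{b(L,K)};\Fp)=b_i^{(2)}(W_K;\Fp)$ for all $i>n$; this is $0$ for $i>n+1$, yielding $b_{>n+1}^{(2)}(W_{b(L,K)};\Fp)=0$, while for $i=n+1$ it equals $b_{n+1}^{(2)}(W_K;\Fp)>0$ by hypothesis.

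Next I would feed these into Lemma~\ref{l:torsion}, with the index named ``$n$'' there equal to the present $n$ and the prime $p$ the same. The first hypothesis of the lemma, $b_{n+1}^{(2)}(\,\cdot\,;\Fp)>b_{n+1}^{(2)}(\,\cdot\,;\Q)$, holds because by the previous paragraph the left side is positive and the right side is $0$; its additional hypothesis $b_{n+2}^{(2)}(\,\cdot\,;\Fp)=0$ holds since $n+2>n+1$. Lemma~\ref{l:torsion} therefore gives $t_n^{(2)}(W_{b(L,K)})>0$ for every cofinal chain in $\mathcal{P}(W_{b(L,K)})$, which is the last assertion (the weaker conclusion $t_n^{(2)}+t_{n+1}^{(2)}>0$ is subsumed).

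The one genuine point of care is that Lemma~\ref{l:torsion} is stated for residually torsion-free nilpotent groups, while $W_{b(L,K)}$ has $2$-torsion. This is dealt with exactly as in the definition of the $\F$-$L^2$-Betti numbers: pass to the commutator subgroup $C_{b(L,K)}$, which is residually torsion-free nilpotent since it embeds in the right-angled Artin group $A_{b(L,K)}$. The normalization $b_*^{(2)}(W_{b(L,K)};\F)=\tfrac{1}{[W_{b(L,K)}:C_{b(L,K)}]}\,b_*^{C_{b(L,K)}}(\Sigma_{b(L,K)};\cD_{\F C_{b(L,K)}})$ transfers the hypotheses to $C_{b(L,K)}$, and one checks that residual chains and the posets $\mathcal{P}(\,\cdot\,)$ behave compatibly under the finite-index inclusion $C_{b(L,K)}<W_{b(L,K)}$, so that a cofinal chain in $\mathcal{P}(W_{b(L,K)})$ induces an admissible chain in $\mathcal{P}(C_{b(L,K)})$. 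The remainder is routine bookkeeping with indices.
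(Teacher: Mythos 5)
This is exactly the paper's argument: the three Betti-number statements follow from Theorem \ref{t:partialsinger} together with the observation that $b_i^{(2)}(W_K;\F)=0$ for $i>\dim K+1=n+1$, and the torsion statement is then quoted from Lemma \ref{l:torsion}, which is precisely what the paper does. Your closing paragraph addresses a point the paper silently elides (it applies Lemma \ref{l:torsion} to $W_{b(L,K)}$, which is not residually torsion-free nilpotent), and passing to $C_{b(L,K)}$ is the right spirit; just note that your claimed compatibility of cofinal chains in $\mathcal{P}(W_{b(L,K)})$ with chains in $\mathcal{P}(C_{b(L,K)})$ is itself delicate --- for odd $p$ a Coxeter group, being generated by involutions, has no nontrivial $p$-group quotients --- so the substantive torsion-growth statement (and the one used for the counterexamples) really lives on a torsion-free finite-index subgroup such as $C_{b(L,K)}$, a looseness already present in the statement as the paper phrases it.
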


\subsection{Counterexamples to the Torsion concentration conjecture}

Another corollary is an improved version of the main examples in \cite{aos21}.
Let $p$ be an odd prime, and let $OL$ be the flag $3$-dimensional complex from \cite{aos21} that has $b_4^{(2)}(W_{OL}; \F_p) \ne 0$ and $b_*^{(2)}(W_{OL};\Q) = 0$, and which embeds as a full subcomplex into a triangulation of $S^6$.
Define a flag triangulation $T$ of $S^{6}$ by taking $T$ to be the relative barycentric subdivision $b(S^{6}, OL)$.

Since $OL$ is a full subcomplex of $T$, the join $OL \ast OL$ is a full $7$-dimensional subcomplex of $T \ast T \cong S^{13}$, and the triple join $OL*OL*OL$ is a full $11$-dimensional subcomplex of $T * T * T\cong S^{20}$,

Let $T'=b(T*T, OL \ast OL)$, and $T'' = b(T * T * T, OL*OL*OL)$.
So $T$, $T'$, and $T''$ are flag triangulations of $S^{6}$, $S^{13}$, and $S^{20}$, respectively.
\begin{Theorem}\label{t:fpsinger}
    \leavevmode
    \begin{itemize}
        \item $P_{T}$ violates the $\Fp$-version of the Singer conjecture.
        \item $P_{T'}$ satisfies the Singer conjecture, and violates the $\Fp$-version and the Torsion concentration conjecture.
        \item $P_{T''}$ violates both the $\Fp$-version of the Singer conjecture and the Torsion concentration conjecture.
    \end{itemize}
\end{Theorem}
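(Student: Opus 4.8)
The plan is to deduce all three bullets from Theorem~\ref{t:partialsinger}, the K\"unneth formula, Poincar\'e duality and Lemma~\ref{l:torsion}; essentially everything is bookkeeping of degrees and dimensions. As a first step we record the $\F$-$L^2$-Betti numbers of the relevant subcomplexes. Write $J_k$ for the $k$-fold join of $OL$ with itself, so $J_1 = OL$, $J_2 = OL\ast OL$, $J_3 = OL\ast OL\ast OL$; then $W_{J_k}=W_{OL}^{\times k}$, $\Sigma_{J_k}=\Sigma_{OL}^{\times k}$ has dimension $4k$, and $J_k$ is a flag full subcomplex of $T^{\ast k}\cong S^{7k-1}$ of dimension $4k-1$. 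Since $b_\ast^{(2)}(W_{OL};\Q)=0$, $b_4^{(2)}(W_{OL};\Fp)\neq 0$ and $b_i^{(2)}(W_{OL};\F)=0$ for $i>4$, the K\"unneth formula gives $b_\ast^{(2)}(W_{J_k};\Q)=0$, $b_{4k}^{(2)}(W_{J_k};\Fp)=b_4^{(2)}(W_{OL};\Fp)^k\neq 0$, and $b_i^{(2)}(W_{J_k};\Fp)=0$ for $i>4k$.

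Next we apply Theorem~\ref{t:partialsinger} to the pairs $(S^6,OL)$, $(T\ast T,J_2)$ and $(T\ast T\ast T,J_3)$ — whose relative barycentric subdivisions are $T$, $T'$ and $T''$ — taking $n=4$, $7$ and $11$, which is permitted since $\dim(T^{\ast k})=7k-1\le 2n-1$ in each case. For $T$ we obtain only the lower bound at the critical degree, namely $b_4^{(2)}(W_T;\Fp)\ge b_4^{(2)}(W_{OL};\Fp)\neq 0$, together with $b_i^{(2)}(W_T;\F)=0$ for $i>4$. The key observation is that for $J_2$ and $J_3$ the nonvanishing degree $4k$ is \emph{strictly} larger than $n$, so Theorem~\ref{t:partialsinger} supplies an equality there for every coefficient field: we get $b_8^{(2)}(W_{T'};\Fp)\neq 0=b_8^{(2)}(W_{T'};\Q)$, $b_9^{(2)}(W_{T'};\Fp)=0$ and $b_i^{(2)}(W_{T'};\Q)=0$ for $i>7$; and likewise $b_{12}^{(2)}(W_{T''};\Fp)\neq 0=b_{12}^{(2)}(W_{T''};\Q)$, $b_{13}^{(2)}(W_{T''};\Fp)=0$ and $b_i^{(2)}(W_{T''};\Q)=0$ for $i>11$.

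From here the $\Fp$-Singer claims are immediate: $P_T$ and $P_{T''}$ are closed aspherical manifolds of odd dimension $7$ and $21$, so the $\Fp$-version of the Singer conjecture would force \emph{all} of their $\Fp$-$L^2$-Betti numbers to vanish, contradicting $b_4^{(2)}(W_T;\Fp)\neq 0$ and $b_{12}^{(2)}(W_{T''};\Fp)\neq 0$; and for the $14$-manifold $P_{T'}$ the $\Fp$-version predicts vanishing outside degree $7$, contradicted by $b_8^{(2)}(W_{T'};\Fp)\neq 0$. On the other hand $b_i^{(2)}(W_{T'};\Q)=0$ for $i>7$ together with Poincar\'e duality on $\widetilde{P_{T'}}$ forces $b_i^{(2)}(W_{T'};\Q)=0$ for all $i\neq 7$, so $P_{T'}$ does satisfy the ordinary Singer conjecture. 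For the torsion bullets we pass to the commutator subgroups $C_{T'}$, $C_{T''}$, which are residually torsion-free nilpotent and satisfy $b_\ast^{(2)}(C_\bullet;\F)=[W_\bullet:C_\bullet]\,b_\ast^{(2)}(W_\bullet;\F)$, so the estimates above transfer unchanged; then Lemma~\ref{l:torsion} applies with $n=7$ to $C_{T'}$ (since $b_8^{(2)}(C_{T'};\Fp)>b_8^{(2)}(C_{T'};\Q)$ and $b_9^{(2)}(C_{T'};\Fp)=0$), giving $t_7^{(2)}(C_{T'})>0$ along any cofinal chain in $\mathcal{P}(C_{T'})$, and with $n=11$ to $C_{T''}$, giving $t_{11}^{(2)}(C_{T''})>0$. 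Since $7\neq(14-1)/2$ and $11\neq(21-1)/2=10$, this contradicts the Torsion concentration conjecture for $P_{T'}$ and $P_{T''}$.

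We do not expect a genuine obstacle beyond this bookkeeping: the substantive inputs — the complex $OL$ from \cite{aos21} and Theorem~\ref{t:partialsinger} — are already established. The one conceptual point worth stressing is why a single join is inadequate: with $OL$ alone the offending $\Fp$-class lies in degree $n=4$, where Theorem~\ref{t:partialsinger} gives only $b_4^{(2)}(W_T;\F)\ge b_4^{(2)}(W_{OL};\F)$ and hence no control on $b_4^{(2)}(W_T;\Q)$, so Lemma~\ref{l:torsion} cannot be invoked and one is left with only the $\Fp$-Singer counterexample $P_T$. Taking the double and triple joins moves that class into degrees $8$ and $12$, strictly above the thresholds $n=7$ and $11$ where Theorem~\ref{t:partialsinger} yields equalities and thus pins down the rational Betti numbers too; this is precisely what allows $P_{T'}$ to simultaneously satisfy ordinary Singer and violate both the $\Fp$-version and the Torsion concentration conjecture.
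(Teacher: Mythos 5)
Your proposal is correct and follows essentially the same route as the paper: K\"unneth for the joins $OL^{\ast k}$, Theorem~\ref{t:partialsinger} with $n=4,7,11$, Poincar\'e duality for the rational Singer statement, and Lemma~\ref{l:torsion} for the torsion growth. The only difference is cosmetic — the paper packages the $T'$ and $T''$ cases through Corollary~\ref{c:torsion}, whereas you unpack that corollary and apply Lemma~\ref{l:torsion} directly to the commutator subgroups, which is a slightly more explicit but equivalent bookkeeping.
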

\begin{proof}
    Applying Theorem~\ref{t:partialsinger} with $n=4$ to $T$ gives $b_{4}^{(2)}(W_T; \Fp) \ge b_{4}^{(2)}(W_{OL}; \Fp) \ne 0$, proving the first claim.

    By the K\"unneth formula, $b_8^{(2)}(W_{OL*OL}; \F_p) \ne 0$ and $b_8^{(2)}(W_{OL*OL}; \Q) = 0$.
    So, applying Corollary~\ref{c:torsion} with $n=7$ to $T'$ gives $b_{8}^{(2)}(W_{T'}; \Fp) \ne 0$, $b_{>7}^{(2)}(W_{T'}; \Q) = 0$, and $t^{(2)}_{7}(W_{T'}) \neq 0$.
    Also, by Poincar\'e duality, $b_{< 7}^{(2)}(W_{T'}; \Q) = 0$.
    This proves the second claim.

    Similarly, $b_{12}^{(2)}(W_{OL*OL*OL}; \F_p) \ne 0$ and $b_{12}^{(2)}(W_{OL*OL*OL}; \Q) = 0$.
    Applying Corollary~\ref{c:torsion} with $n=11$ to $T''$ gives $b_{12}^{(2)}(W_{T''}; \Fp) \ne 0$, and $t^{(2)}_{11}(W_{T''})\neq 0$.
\end{proof}

Taking joins with $5$-gons gives similar examples in higher dimensions of the same parity.
At this point, we cannot prove the Singer conjecture for any odd-dimensional non-$\F_p$-$L^{2}$-acyclic example.

\section{Minimally branching complexes}

We end with a non-manifold application of Theorem \ref{t:subdivideedge}.
\begin{Theorem}\label{t:trivalent}
    If $L$ is a connected flag graph with vertices of degree $\le 3$, then $b_2^{(2)}(W_L; \F) = 0$ unless $L = K_{3,3}$.
\end{Theorem}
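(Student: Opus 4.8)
Since $L$ is a $1$‑dimensional flag complex, being flag just means $L$ is triangle‑free; in particular $\Lk_L e = \emptyset$ for every edge $e$, and $\Lk_L v$ is a set of $\deg(v)$ points for every vertex $v$. The first step is to \emph{peel}: if $\deg v \le 2$ then $W_{\Lk_L v}$ is trivial, $\zz/2$, or $D_\infty$, each of which has $b_1^{(2)} = b_2^{(2)} = 0$, so Lemma~\ref{l:v-remove} (with $i = 2$) gives $b_2^{(2)}(W_{L-v};\F) = b_2^{(2)}(W_L;\F)$, and this needs no connectedness hypothesis. Iterating as long as there is a vertex of degree $\le 2$ replaces $L$ by its \emph{cubic core}, the largest induced subgraph with minimum degree $\ge 3$, which (the maximum degree being $\le 3$) is a disjoint union of connected $3$‑regular triangle‑free graphs; moreover $b_2^{(2)}(W_L;\F)$ equals the sum of the $b_2^{(2)}$ of the core's components, using additivity of $b_2^{(2)}$ over disjoint unions of nerves (this corresponds to a free product of RACG's, and follows from Lemma~\ref{l:agra}(2) since $K_{L_1 \sqcup L_2}$ is a wedge of $K_{L_1}$ and $K_{L_2}$ at the cone point, with intersection a $0$‑dimensional orbit). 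If the core is empty we are done. If some core component is $K_{3,3}$, then since all its vertices already have degree $3$ there are no edges from it to the rest of $L$, so connectedness of $L$ forces $L = K_{3,3}$; otherwise every core component is a connected $3$‑regular triangle‑free graph different from $K_{3,3}$. So the theorem reduces to the claim: \emph{if $L$ is connected, $3$‑regular and triangle‑free with $L \ne K_{3,3}$, then $b_2^{(2)}(W_L;\F) = 0$.}

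For $L = K_{3,3}$ the claim genuinely fails, and it is worth recording the obstruction. The complex $\Sigma_{K_{3,3}}$ is $2$‑dimensional and $W_{K_{3,3}}$ is infinite, so $b_0^{(2)} = 0$; meanwhile the combinatorial Euler characteristic is $\chi^{(2)}(W_{K_{3,3}}) = 1 - \tfrac62 + \tfrac94 = \tfrac14$ over any $\F$, whence $b_2^{(2)} = b_1^{(2)} + \tfrac14 > 0$. The reason $K_{3,3}$ is special combinatorially is that every non‑adjacent pair of its vertices has \emph{three} common neighbours; conversely, if $L$ is $3$‑regular and triangle‑free and every non‑adjacent pair has three common neighbours, then for a fixed vertex $a$ with $N(a) = \{x,y,z\}$ one checks that every vertex outside $\{a,x,y,z\}$ has neighbourhood exactly $\{x,y,z\}$, forcing $|V(L)| = 6$ and $L = K_{3,3}$. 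Hence whenever $L \ne K_{3,3}$ is $3$‑regular and triangle‑free there is a non‑adjacent pair $\{a,b\}$ with $N_L(a) \cap N_L(b)$ consisting of at most two points.

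Given such a pair, let $L'$ be obtained from $L$ by adjoining the edge $ab$ and filling in every resulting triangle: $L' = L \cup \{ab\} \cup \{\, abc : c \in N_L(a)\cap N_L(b)\,\}$. This is flag, since the $c$'s are pairwise non‑adjacent in $L$ (else $L$ would contain a triangle), so no higher empty simplices appear. Then $L = L' - ab$ (removing the edge $ab$ and all simplices containing it returns $L$, which is already flag), and $\Lk_{L'} ab = N_L(a)\cap N_L(b)$ is at most two discrete points, so $W_{\Lk_{L'} ab}$ has $b_1^{(2)}(\cdot;\F) = 0$. Theorem~\ref{t:subdivideedge} therefore yields $b_2^{(2)}(W_L;\F) \le b_2^{(2)}(W_{L'};\F)$. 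I would then repeat this move on $L'$ — at each stage adjoining an edge between a non‑adjacent pair whose common‑neighbour subcomplex carries an $\F$‑$L^2$‑acyclic RACG, then taking the flag completion — with the goal of reaching, after finitely many steps, the full simplex $\Delta^{|V(L)|-1}$ on the vertex set of $L$. Since $W_{\Delta^{|V(L)|-1}} = (\zz/2)^{|V(L)|}$ is finite, $b_2^{(2)}(W_{\Delta^{|V(L)|-1}};\F) = 0$, and chaining the inequalities along the way gives $b_2^{(2)}(W_L;\F) \le 0$, proving the claim.

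The crux is the well‑definedness of this last step: one must show that, starting from a connected $3$‑regular triangle‑free graph $\ne K_{3,3}$ and performing only the moves above, one is never stuck at a flag complex $X \ne \Delta^{|V|-1}$ every non‑edge of which has common‑neighbour subcomplex with nonzero first $\F$‑$L^2$‑Betti number — which is exactly what happens for $X = K_{3,3}$, consistently with $b_2^{(2)}(W_{K_{3,3}};\F) \ne 0$. I expect this to be organized as an induction on the number of non‑edges of $X$, in which the hypotheses of valence $\le 3$ and $X \ne K_{3,3}$ are used to produce the next admissible edge to adjoin; alternatively one might short‑circuit the induction by showing that after a bounded number of moves $X$ becomes manifestly $\F$‑$L^2$‑acyclic in degree $2$ — a join or suspension, or a complex with a vertex whose link has an $\F$‑$L^2$‑acyclic RACG and which can be peeled off via Lemma~\ref{l:v-remove}. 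Making one of these routes work for an arbitrary such graph — for instance the Petersen graph, which has girth $5$ and diameter $2$, so that \emph{every} non‑edge has exactly one common neighbour — is the main content of the proof.
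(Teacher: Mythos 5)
Your reduction steps are fine (the peeling of degree $\le 2$ vertices, the reduction to connected cubic triangle-free $L \ne K_{3,3}$, and the observation that such an $L$ has a non-adjacent pair with at most two common neighbours, so that one application of Theorem~\ref{t:subdivideedge} gives $b_2^{(2)}(W_L;\F) \le b_2^{(2)}(W_{L'};\F)$ for the flag completion $L'$ of $L\cup\{ab\}$). But the argument then stops exactly where the theorem actually lives: you never prove that the edge-adding process can be continued all the way to the full simplex (or to any complex known to be $\F$-$L^2$-acyclic in degree $2$) while keeping, at every stage, a non-edge whose common-neighbour subcomplex has vanishing $b_1^{(2)}$. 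You state this well-definedness as ``the crux,'' sketch two possible organizations of an induction, and concede that carrying it out even for the Petersen graph is ``the main content of the proof.'' As written this is a plan plus an honest acknowledgment of the missing step, not a proof; and the missing step is genuinely delicate, since after a few additions the complex is $2$-dimensional, links of prospective edges need no longer be discrete, and a non-edge with three pairwise non-adjacent common neighbours has link with $b_1^{(2)}=\tfrac12\ne 0$, so the order of additions has to be controlled by some global combinatorial argument you have not supplied.

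For comparison, the paper runs the monotonicity of Theorem~\ref{t:subdivideedge} in the opposite direction, which is what makes the bookkeeping tractable. After the same reductions (plus Mayer--Vietoris arguments disposing of separating edges and separating non-adjacent vertex pairs), it finds two non-adjacent vertices $u,v$ whose \emph{stars do not separate} $L$ (a short case analysis: either $L$ contains a full suspension of three points, or one takes $u,v$ at maximal distance). Then $L$ sits inside the suspension $S(L-u-v)$, which is $\F$-$L^2$-acyclic for free, and one deletes the extra suspension edges one at a time, in an order dictated by the connectivity of $L-\St u$ and $L-\St v$: starting from a vertex of $\Lk_L v\smallsetminus \Lk_L u$ and working outward, every deleted edge has link consisting of at most two points, so Theorem~\ref{t:subdivideedge} applies at each step and $b_2^{(2)}(W_L;\F)=0$ follows. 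The non-separating-star condition is precisely the control mechanism that your ``upward'' scheme lacks; if you want to salvage your approach, you would need an analogous global statement guaranteeing an admissible ordering of edge additions, and producing one looks at least as hard as the paper's argument.
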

\begin{proof}
    If $L$ has a vertex of degree $\le 2$, we can iteratively remove the vertices of $L$, at each stage removing a vertex of degree $\le 2$.
    These removals does not affect $b_2^{(2)}(; \F)$, hence $b_2^{(2)}(W_L; \F) = 0$.
    Therefore, we can assume that $L$ is trivalent.
    Furthermore, note that if $L$ has a proper full subcomplex $A$ so that $L - A$ is disconnected and $b_1^{(2)}(W_A; \F) = 0$, then we get a decomposition $L = L_1 \cup_A L_2$ where each $L_i$ has a vertex of degree $\leq 2$.
    Therefore, $b_2^{(2)}(W_{L_i}; \F) = 0$ so $b_2^{(2)}(W_L; \F) = 0$ by Mayer--Vietoris.
    In particular, we are done if $L$ has a separating edge or separating pair of non-adjacent vertices, so assume that this does not occur and that $L \ne K_{3,3}$.

    We now show that $L$ has two non-adjacent vertices $u$ and $v$ with non-separating stars.
    There are two cases: either $L$ contains a copy of a suspension of three points as a full subcomplex, or it does not.

    Suppose there is a suspension of three points inside of $L$ with $x$ and $y$ suspension points and base $\{a,b,c\}$.
    Let $\{a',b',c'\}$ denote the endpoints of edges starting at $\{a,b,c\}$ which are not in the suspension.
    Note that $\{a',b',c'\}$ are all different.
    Indeed, since $L \ne K_{3,3}$, they cannot be all the same, and if $a'=b'$, then $a'$ and $c'$ would separate the suspension from the rest of the complex.

    Next we claim that the stars of $\{a,b,c\}$ are all non-separating.
    Consider a vertex $z \in L - \St a - c $.
    Since the pair $a', c$ does not separate, there is a geodesic in $L-a'-c$ from $z$ to $b$.
    Such geodesic cannot go through $x$, $y$ and $a$, so $z$ is connected to $b$ in $L - \St a - c$.
    Thus $L - \St a - c$ is connected, and since $c$ is adjacent to $c'$, $L - \St a$ is connected as well.
    Therefore we can take $a$ and $b$ as $u$ and $v$.

    For the second case, we claim that we can take $u$ and $v$ to be points of maximum distance.
    The maximality of the distance implies that for any vertex in $\Lk v$ at least one of the edges from it points in the direction of $u$.
    Suppose, to the contrary, that $\St v$ separates, and let $C$ be a component of $L - \St v$ not containing $u$.
    Then the first edge of a geodesic in $L$ from any vertex of $C$ to $u$ must have the endpoint in $\Lk v$.
    Therefore, vertices in $\Lk v $ are adjacent to at most one vertex in $C$, and vertices in $C$ are all adjacent to a vertex in $\Lk v$.
    So, $C$ has at most three vertices.
    $C$ cannot be a single vertex, since $L$ does not contain suspensions of three points.
    If there are two vertices in $C$, then they are adjacent and each is adjacent to two vertices in $\Lk v$, so one vertex in the link is adjacent to both, contradiction.
    If there are three, then each is adjacent to a unique vertex in the link, since $L$ is trivalent they all must be adjacent, so we get a triangle, contradicting flagness.
    Thus stars of $u$ and $v$ do not separate, and we are done with the second case.

    Now, $L$ is a subcomplex of the suspension $S(L - u - v)$ which has $b_\ast^{(2)}(W_{S(L - u - v)}; \F) = 0$ (we think of $u$ and $v$ as the two suspension points).
    By repeated application of Theorem \ref{t:subdivideedge}, we will be done if we can iteratively remove the extra edges of $S(L - u -v)$ until we get to $L$, so that at each step, $b_{1}^{(2)}(W_{\Lk e}; \F) = 0$.

    We first remove edges containing $u$ which join $u$ to vertices $\notin \Lk_L u$, starting with an edge whose other endpoint is $w \in \Lk_{L}v \smallsetminus \Lk_{L}u$.
    (Note that since $u$ and $v$ are note adjacent and $L \neq K_{3,3}$ the links of $u$ and $v$ are different.) The link of this edge is the same as the link of $w$ in $L - u - v$, so it is $2$ points and hence $b_{\ast}^{(2)}(W_{\Lk e}; \F) = 0$.
    We continue removing all edges emanating from $u$ and ending outside of $\Lk_{L}u$.
    Since $L - \St u$ is connected, any vertex in $L - \St u$ is connected by a path to $v$, and hence connected to a vertex in $\Lk v$.
    Therefore, we can always remove these edges so that links are $\le 2$ points.
    Removing these edges containing $u$ does not change any of the links of edges containing $v$, so we do the same procedure starting at a vertex in $\Lk_{L}u \smallsetminus \Lk_{L}v$, until we are left with $L$.
\end{proof}

The higher dimensional version of Theorem \ref{t:trivalent} is easier to prove, due to the following lemma.
Let $3^{*n}$ denote the $n$-fold join of $3$ points with itself.
\begin{lemma}\label{l:join3}
    Suppose $L$ is an $n$-dimensional connected flag complex with links of vertices $\cong 3^{*n}$.
    If $n \ge 2$, then $L = 3^{*n+1}$.
\end{lemma}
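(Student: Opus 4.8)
The plan is a direct local‑to‑global argument; no induction on $n$ is needed, and the hypothesis $n\ge 2$ will enter only through connectivity of the $1$‑skeleton of $3^{*n}$. The starting point is the standard fact that a flag complex $X$ has a unique decomposition $X = X_1 \ast \dots \ast X_m$ into join‑indecomposable factors, the $X_i$ being the vertex sets of the connected components of the graph complementary to $X^{(1)}$; for $X \cong 3^{*n}$ this complement is a disjoint union of $n$ triangles, so every factor has exactly three vertices and each vertex lies in a canonical factor. I would fix a vertex $v$ of $L$, write $\Lambda := \Lk_L v = A_1 \ast \dots \ast A_n$ with $\abs{A_i}=3$, and for each $u \in \Lk_L v$ let $P_u$ be the (three‑element) factor of $\Lk_L u \cong 3^{*n}$ containing $v$; its other two elements are precisely the two non‑neighbours of $v$ inside $\Lk_L u$.

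The first step is to show that $P_u$ does not depend on $u$. If $\{u_1,u_2\}$ is an edge of $\Lk_L v$, then $\{v,u_1,u_2\}$ spans a triangle by flagness, so $v$ lies in $\Lk_L\{u_1,u_2\} = \Lk_{\Lk_L u_1}(u_2) \cong 3^{*(n-1)}$; since $v$ and $u_2$ are adjacent in the join $\Lk_L u_1$ they lie in different factors there, so $P_{u_1}$ survives as a factor of $\Lk_L\{u_1,u_2\}$ after deleting the $u_2$‑factor, and it is the factor through $v$ — as is $P_{u_2}$, whence $P_{u_1}=P_{u_2}$ by uniqueness of the decomposition. Because $(\Lk_L v)^{(1)}$ is connected for $n\ge2$, a chain argument makes all the $P_u$ agree; call the common triple $P = \{v,v',v''\}$. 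Next I would observe that $\{v',v''\}\subseteq \Lk_L u$ for every $u\in\Lk_L v$, so $v',v''$ are adjacent to all of $\Lambda$; since $\Lk_L v'\cong 3^{*n}$ has the same number of vertices as $\Lambda$, and both are full subcomplexes of the flag complex $L$ on that common vertex set, this forces $\Lk_L v' = \Lambda = \Lk_L v''$. Finally, for $z\in A_i\subseteq\Lk_L v$ the factor of $\Lk_L z$ through $v$ is $P$, so $\Lk_L z = P \ast \Lk_{\Lk_L z}(v) = P \ast \Lk_\Lambda(z) = P \ast A_1 \ast \dots \widehat{A_i} \dots \ast A_n$.

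From these computations, every vertex in $S := \{v,v',v''\}\cup A_1\cup\dots\cup A_n$ has all of its neighbours in $S$, so connectivity of $L$ gives $L^{(0)}=S$; reading off adjacencies from $\Lk_L v = \Lk_L v' = \Lk_L v'' = \Lambda$ and from the formula for $\Lk_L z$ shows that each of $P, A_1,\dots,A_n$ is an independent triple and that vertices in distinct ones of these $n{+}1$ sets are adjacent, i.e. $L^{(1)}$ is the complete $(n{+}1)$‑partite graph on these parts, so $L$, being flag, is its flag complex, namely $P \ast A_1 \ast \dots \ast A_n \cong 3^{*(n+1)}$. The main obstacle is the first step: the whole argument hinges on the triple $P$ being canonically attached to $v$ and not to the auxiliary vertex $u$, and this is exactly where connectivity of the link — equivalently $n\ge2$ — is indispensable; for $n=1$ the link is three isolated points, $P_u$ genuinely varies with $u$, and the conclusion fails, as the trivalent case already illustrates.
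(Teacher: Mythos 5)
Your proposal is correct and is essentially the paper's argument: fix $v$, identify in the link of each neighbour $u$ the canonical $3$-point join factor containing $v$ (equivalently, $v$ together with its two non-neighbours in $\Lk u$), use connectivity of $\Lk v \cong 3^{*n}$ (this is where $n \ge 2$ enters) to see this triple is independent of $u$, and then conclude $L = P \ast \Lk v \cong 3^{*(n+1)}$ from connectivity of $L$. The only cosmetic difference is that you establish local constancy of the triple via $\Lk_L\{u_1,u_2\}$ and uniqueness of join decompositions, whereas the paper simply notes that the two partners of $v$ are adjacent to every neighbouring vertex $y$ of $\Lk v$ and hence must be the two non-neighbours of $v$ in $\Lk y$.
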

\begin{proof}
    Let $v \in L$, and let $x \in \Lk v$.
    Then, since $\Lk(x) \cong 3^{*n}$, $x$ is adjacent to two vertices $u$ and $w$ that are not in $\St v$, and $\{v,u,w\}$ is a $3$ point factor of $\Lk(x)$.
    Hence, if $y \in \Lk(v) $ is adjacent to $x$, both $u$ and $w$ are adjacent to $y$.
    Since $\Lk v \cong 3^{*n}$ is connected for $n \ge 2$, it follows that $u$ and $w$ are the same for all vertices in $\Lk v$.
    So we get a copy of $3^{*n+1}$ in $L$, and since $L$ is connected we are done.
\end{proof}
\begin{Theorem}\label{t:annoying}
    Suppose that $L$ is an $(n-1)$-dimensional flag complex such that every $(n-2)$-simplex is contained in at most three $(n-1)$-simplices and links of $k$-simplices are connected for $-1 \leq k < n-2$.
    Then $b_{n}^{(2)}(W_L; \F) = 0$ unless $L=3^{*n}$.
\end{Theorem}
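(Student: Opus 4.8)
The plan is to argue by induction on $n$, with the base cases $n\le 2$ handled by Theorem~\ref{t:trivalent} (together with the trivial observation that a $0$-dimensional $L$ with at most three vertices has $b_1^{(2)}(W_L;\F)=0$ unless it is three points). So assume $n\ge 3$ and $L\ne 3^{*n}$; the goal is $b_n^{(2)}(W_L;\F)=0$. First I would check that the hypotheses descend to vertex links: if $v$ is a vertex, then $\Lk_L v$ is $(n-2)$-dimensional, each of its $(n-3)$-simplices $\gs$ lies in $\Lk_L(v*\gs)$, which has at most three vertices, and $\Lk_{\Lk_L v}\gs=\Lk_L(v*\gs)$ is connected whenever $v*\gs$ has dimension $<n-2$, i.e.\ for $-1\le\dim\gs<(n-2)-2$; moreover $\Lk_L v$ itself is connected since $n\ge 3$. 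Hence the inductive hypothesis applies to $\Lk_L v$ and yields $b_{n-1}^{(2)}(W_{\Lk_L v};\F)=0$ unless $\Lk_L v\cong 3^{*(n-1)}$. Since $L\ne 3^{*n}$ and $n-1\ge 2$, Lemma~\ref{l:join3} guarantees that not every vertex link is $3^{*(n-1)}$.

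The engine is to realize $L$ as a subcomplex of an $\F$-$L^2$-acyclic complex from which it is recovered by edge removals, and then apply Theorem~\ref{t:subdivideedge}. Given non-adjacent vertices $u,v$, the complex $L$ is a (generally non-full) subcomplex of the suspension $S(L-u-v)$ with poles $u,v$: every simplex of $L$ either misses both $u,v$ (so lies in the full subcomplex $L-u-v$) or has the form $u*\gs_0$ or $v*\gs_0$ with $\gs_0\in L-u-v$. Here $W_{S(L-u-v)}\cong D_\infty\times W_{L-u-v}$, which is $\F$-$L^2$-acyclic in every degree by the K\"unneth formula. The simplices of $S(L-u-v)$ not in $L$ are precisely those containing one of the ``extra'' edges $\{u,w\}$ with $w\notin\Lk_L u$ or $\{v,w\}$ with $w\notin\Lk_L v$, and deleting all of them returns $L$ exactly. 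It remains to delete them one at a time so that at each step the link of the deleted edge has vanishing $b_{n-1}^{(2)}$; then Theorem~\ref{t:subdivideedge} applied with $i=n$ gives $b_n^{(2)}(W_L;\F)\le b_n^{(2)}(W_{S(L-u-v)};\F)=0$.

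To see the link condition holds, process the extra edges through $u$ first. When $\{u,w\}$ is deleted, its link is obtained from $\Lk_L w$ by deleting a set $T$ of vertices (namely $v$, if present, and the endpoints of previously deleted extra edges through $u$). Deleting a vertex from a complex whose link there is at most $(n-3)$-dimensional cannot raise $b_{n-1}^{(2)}$ (Lemma~\ref{l:v-remove}), and deleting a single vertex from $3^{*(n-1)}$ produces a suspension, which is $\F$-$L^2$-acyclic; so the link of the deleted edge has $b_{n-1}^{(2)}(W_{\cdot};\F)=0$ provided that either $\Lk_L w\ne 3^{*(n-1)}$ (then $b_{n-1}^{(2)}(W_{\Lk_L w};\F)=0$ already, by induction) or $T$ is nonempty. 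To force the latter in the troublesome case, I would order the extra vertices $w\notin\Lk_L u$ by a breadth-first search of the connected complex $L-\St_L u$ rooted at $v$: the BFS-parent of $w$ is adjacent to $w$ in $L$ and is either $v$ or an already-processed extra vertex, hence lies in $T$ at the moment $\{u,w\}$ is deleted. Repeating the whole procedure for the extra edges through $v$, ordered by a BFS of $L-\St_L v$ rooted at $u$, completes the reduction.

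The step I expect to be the real obstacle is producing non-adjacent vertices $u,v$ with $L-\St_L u$ and $L-\St_L v$ both connected---the higher-dimensional counterpart of the ``two non-adjacent vertices with non-separating stars'' claim that occupies most of the proof of Theorem~\ref{t:trivalent}. I would first dispose of degenerate cases: if $L$ is a simplex then $W_L$ is finite and there is nothing to prove, and if $L$ has a separating full subcomplex $A$ with $b_{n-1}^{(2)}(W_A;\F)=0$, a Mayer--Vietoris argument as in Lemma~\ref{l:v-remove} and Theorem~\ref{t:trivalent} reduces to the pieces. In the remaining (suitably ``irreducible'') situation, Lemma~\ref{l:join3} is exactly what makes this easier than in dimension one: the only vertices whose stars one must worry about are the maximally branching ones with link $3^{*(n-1)}$, and these cannot exhaust $L$, so a case analysis parallel to---but shorter than---the trivalent one locates $u$ and $v$. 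Once that combinatorial input is secured, the homological argument above applies verbatim.
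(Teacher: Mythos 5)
Your outline stalls exactly where you say it will: the existence of non-adjacent vertices $u,v$ with $L-\St u$ and $L-\St v$ connected is the load-bearing combinatorial input for your suspension-plus-edge-removal engine, and you do not prove it --- you only assert that ``a case analysis parallel to the trivalent one'' will locate $u$ and $v$. In dimension one that case analysis is most of the proof of Theorem~\ref{t:trivalent} and leans on trivalent-graph-specific facts (suspensions of three points, vertices at maximal distance, excluding $K_{3,3}$); nothing in your sketch indicates how it generalizes. Your fallback reduction is also shaky: if $A$ is a separating full subcomplex with $b_{n-1}^{(2)}(W_A;\F)=0$, Mayer--Vietoris only helps if you can kill $b_n^{(2)}(W_{L_i};\F)$ for the pieces, but the pieces $L_i$ need not satisfy the hypotheses of the theorem (their simplex links need not be connected), so ``reduces to the pieces'' is not available without further argument. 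The BFS ordering of the extra edges and the control of the edge links via Lemma~\ref{l:v-remove} are fine in spirit, but they are propping up a structure whose foundation is missing, so as written the proof is incomplete.

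The paper's proof avoids all of this machinery, and you already had its two ingredients in hand. You note that vertex links inherit the hypotheses, so by induction $b_{n-1}^{(2)}(W_{\Lk v};\F)=0$ unless $\Lk v\cong 3^{*(n-1)}$; moreover $b_n^{(2)}(W_{\Lk v};\F)=0$ automatically since $\Lk v$ is at most $(n-2)$-dimensional. Hence by Lemma~\ref{l:v-remove} any vertex whose link is not $3^{*(n-1)}$ can be removed without changing $b_n^{(2)}(W_L;\F)$, which reduces to the case where every vertex link is $3^{*(n-1)}$; then Lemma~\ref{l:join3} (this is where $n-1\ge 2$ enters, and it is exactly why the high-dimensional case is easier than Theorem~\ref{t:trivalent}) forces $L=3^{*n}$. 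No choice of $u,v$, no Theorem~\ref{t:subdivideedge}, and no careful ordering of edge removals is needed. In short: you correctly identified that Lemma~\ref{l:join3} produces a vertex with link not $3^{*(n-1)}$, but then pivoted to transplanting the one-dimensional argument instead of simply deleting such vertices, which is the whole proof.
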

\begin{proof}

    The proof is by induction on $\dim L$.
    If $L$ is $0$-dimensional it is obvious and if $L$ is $1$-dimensional, then it is Theorem \ref{t:trivalent}.
    Now assume that the statement holds for all such $(n-1)$-dimensional flag complexes, and let $L \neq 3^{*n+1}$ be an $n$-dimensional flag complex with $n \ge 1$ which satisfies the hypotheses.
    Note that $L$ is connected and if $v$ is a vertex of $L$ then $\Lk v$ satisfies the conditions of the theorem, since the link of $\gs$ in $\Lk v$ is the link of $v \gs$ in $L$.

    We can assume that the link of each vertex in $L$ is $3^{*n}$.
    Otherwise by induction it would have $b_{n}^{(2)}(W_{\Lk v}; \F) = 0$, and hence its removal does not affect $b_{n+1}^{(2)}(W_L; \F)$ by Lemma \ref{l:v-remove}.
    So, we are done by Lemma \ref{l:join3}.
\end{proof}
\begin{bibdiv}
    
\begin{biblist}

\bib{aos21}{article}{
    author = {Avramidi, Grigori},
    author = {Okun, Boris},
    author = {Schreve, Kevin},
    title = {Mod {$p$} and torsion homology growth in nonpositive curvature},
    date = {2021},
    issn = {0020-9910},
    journal = {Invent. Math.},
    volume = {226},
    number = {3},
    pages = {711\ndash 723},
    url = {https://mathscinet.ams.org/mathscinet-getitem?mr=4337971},
    review = {\MR{4337971}}, }

\bib{aos24}{article}{
    author = {Avramidi, Grigori},
    author = {Okun, Boris},
    author = {Schreve, Kevin},
    title = {Homology growth, hyperbolization, and fibering},
    date = {2024},
    issn = {1016-443X},
    journal = {Geom. Funct. Anal.},
    volume = {34},
    number = {2},
    pages = {303\ndash 376},
    url = {https://doi.org/10.1007/s00039-024-00667-w},
    review = {\MR{4715365}}, }

\bib{blls14}{article}{
    author = {Bergeron, Nicolas},
    author = {Linnell, Peter},
    author = {L\"{u}ck, Wolfgang},
    author = {Sauer, Roman},
    title = {On the growth of {B}etti numbers in {$p$}-adic analytic towers},
    date = {2014},
    issn = {1661-7207},
    journal = {Groups Geom. Dyn.},
    volume = {8},
    number = {2},
    pages = {311\ndash 329},
    url = {https://doi.org/10.4171/GGD/227},
    review = {\MR{3231217}}, }

\bib{cd95b}{article}{
    author = {Charney, Ruth},
    author = {Davis, Michael},
    title = {The {E}uler characteristic of a nonpositively curved, piecewise {E}uclidean manifold},
    date = {1995},
    issn = {0030-8730},
    journal = {Pacific J. Math.},
    volume = {171},
    number = {1},
    pages = {117\ndash 137},
    url = {http://projecteuclid.org/euclid.pjm/1102370321},
    review = {\MR{1362980}}, }

\bib{cd95c}{article}{
    author = {Charney, Ruth~M.},
    author = {Davis, Michael~W.},
    title = {Strict hyperbolization},
    date = {1995},
    issn = {0040-9383},
    journal = {Topology},
    volume = {34},
    number = {2},
    pages = {329\ndash 350},
    url = {https://mathscinet.ams.org/mathscinet-getitem?mr=1318879},
    review = {\MR{1318879}}, }

\bib{d87}{article}{
    author = {Davis, Michael~W.},
    title = {Some aspherical manifolds},
    date = {1987},
    issn = {0012-7094},
    journal = {Duke Math. J.},
    volume = {55},
    number = {1},
    pages = {105\ndash 139},
    url = {http://dx.doi.org/10.1215/S0012-7094-87-05507-4},
    review = {\MR{883666 (88j:57044)}}, }

\bib{dj00}{article}{
    author = {Davis, Michael~W.},
    author = {Januszkiewicz, Tadeusz},
    title = {Right-angled {A}rtin groups are commensurable with right-angled {C}oxeter groups},
    date = {2000},
    issn = {0022-4049},
    journal = {J. Pure Appl. Algebra},
    volume = {153},
    number = {3},
    pages = {229\ndash 235},
    url = {http://dx.doi.org/10.1016/S0022-4049(99)00175-9},
    review = {\MR{1783167 (2001m:20056)}}, }

\bib{do01}{article}{
    author = {Davis, Michael~W.},
    author = {Okun, Boris},
    title = {Vanishing theorems and conjectures for the {$\ell^2$}-homology of right-angled {C}oxeter groups},
    date = {2001},
    issn = {1465-3060},
    journal = {Geom. Topol.},
    volume = {5},
    pages = {7\ndash 74},
    url = {http://dx.doi.org/10.2140/gt.2001.5.7},
    review = {\MR{1812434 (2002e:58039)}}, }

\bib{do04}{article}{
    author = {Davis, Michael~W.},
    author = {Okun, Boris},
    title = {{$l^2$}-homology of right-angled {C}oxeter groups based on barycentric subdivisions},
    date = {2004},
    issn = {0166-8641},
    journal = {Topology Appl.},
    volume = {140},
    number = {2-3},
    pages = {197\ndash 202},
    url = {http://dx.doi.org/10.1016/j.topol.2003.07.008},
    review = {\MR{2074916 (2005c:20088)}}, }

\bib{fm04}{article}{
    author = {Feichtner, Eva Maria},
    author = {M\"{u}ller, Irene},
    title = {On the topology of nested set complexes},
    date = {2004},
    issn = {0166-8641},
    journal = {Proceedings of the AMS},
    volume = {133},
    number = {4},
    pages = {999\ndash 1006},

}

\bib{hk21}{article}{
    author = {Henneke, Fabian},
    author = {Kielak, Dawid},
    title = {Agrarian and {$L^2$}-invariants},
    date = {2021},
    issn = {0016-2736},
    journal = {Fund. Math.},
    volume = {255},
    number = {3},
    pages = {255\ndash 287},
    url = {https://doi.org/10.4064/fm808-4-2021},
    review = {\MR{4324826}}, }

\bib{h70}{article}{
    author = {Hughes, Ian},
    title = {Division rings of fractions for group rings},
    date = {1970},
    issn = {0010-3640},
    journal = {Comm. Pure Appl. Math.},
    volume = {23},
    pages = {181\ndash 188},
    url = {https://doi.org/10.1002/cpa.3160230205},
    review = {\MR{263934}}, }

\bib{j21}{article}{
    author = {Jaikin-Zapirain, Andrei},
    title = {The universality of {H}ughes-free division rings},
    date = {2021},
    issn = {1022-1824},
    journal = {Selecta Math. (N.S.)},
    volume = {27},
    number = {4},
    pages = {Paper No. 74, 33},
    url = {https://doi.org/10.1007/s00029-021-00691-w},
    review = {\MR{4292784}}, }

\bib{jl20}{article}{
    author = {Jaikin-Zapirain, Andrei},
    author = {L\'{o}pez-\'{A}lvarez, Diego},
    title = {The strong {A}tiyah and {L}\"{u}ck approximation conjectures for one-relator groups},
    date = {2020},
    issn = {0025-5831},
    journal = {Math. Ann.},
    volume = {376},
    number = {3-4},
    pages = {1741\ndash 1793},
    url = {https://doi.org/10.1007/s00208-019-01926-0},
    review = {\MR{4081128}}, }

\bib{jnw21}{article}{
    author = {Jankiewicz, Kasia},
    author = {Norin, Sergey},
    author = {Wise, Daniel~T.},
    title = {Virtually fibering right-angled {C}oxeter groups},
    date = {2021},
    issn = {1474-7480},
    journal = {J. Inst. Math. Jussieu},
    volume = {20},
    number = {3},
    pages = {957\ndash 987},
    url = {https://doi.org/10.1017/S1474748019000422},
    review = {\MR{4260646}}, }

\bib{kkl23}{arxiv}{
    author = {Kirstein, Dominik},
    author = {Kremer, Christian},
    author = {Lueck, Wolfgang},
    title = {Some problems and conjectures about $l^2$-invariants},
    date = {2023},
    url = {https://arxiv.org/pdf/2311.17830.pdf}, }

\bib{l93}{article}{
    author = {Linnell, Peter},
    title = {Division rings and group von Neumann algebras},
    date = {1993},
    journal = {Forum Math.},
    volume = {5},
    pages = {561-576}, }

\bib{lls11}{article}{
    author = {Linnell, Peter},
    author = {L\"{u}ck, Wolfgang},
    author = {Sauer, Roman},
    title = {The limit of {$\mathbb F_p$}-{B}etti numbers of a tower of finite covers with amenable fundamental groups},
    date = {2011},
    issn = {0002-9939},
    journal = {Proc. Amer. Math. Soc.},
    volume = {139},
    number = {2},
    pages = {421\ndash 434},
    url = {https://doi.org/10.1090/S0002-9939-2010-10689-5},
    review = {\MR{2736326}}, }

\bib{los12}{article}{
    author = {Linnell, Peter},
    author = {Okun, Boris},
    author = {Schick, Thomas},
    title = {The strong {A}tiyah conjecture for right-angled {A}rtin and {C}oxeter groups},
    date = {2012},
    journal = {Geometriae Dedicata},
    volume = {158},
    number = {1},
    pages = {261\ndash 266}, }

\bib{ll95}{article}{
    author = {Lott, John},
    author = {L{\"u}ck, Wolfgang},
    title = {${L}\sp 2$-topological invariants of $3$-manifolds},
    date = {1995},
    issn = {0020-9910},
    journal = {Invent. Math.},
    volume = {120},
    number = {1},
    pages = {15\ndash 60},
    review = {\MR{96e:58150}}, }

\bib{l13}{article}{
    author = {L\"{u}ck, W.},
    title = {Approximating {$L^2$}-invariants and homology growth},
    date = {2013},
    issn = {1016-443X},
    journal = {Geom. Funct. Anal.},
    volume = {23},
    number = {2},
    pages = {622\ndash 663},
    url = {https://mathscinet.ams.org/mathscinet-getitem?mr=3053758},
    review = {\MR{3053758}}, }

\bib{l94a}{article}{
    author = {L{\"u}ck, Wolfgang},
    title = {${L}\sp 2$-{B}etti numbers of mapping tori and groups},
    date = {1994},
    issn = {0040-9383},
    journal = {Topology},
    volume = {33},
    number = {2},
    pages = {203\ndash 214},
    review = {\MR{95g:58235}}, }

\bib{l02}{book}{
    author = {L{\"u}ck, Wolfgang},
    title = {{$L^2$}-invariants: theory and applications to geometry and {$K$}-theory},
    series = {Ergebnisse der Mathematik und ihrer Grenzgebiete. 3. Folge / A Series of Modern Surveys in Mathematics.}, publisher={Springer--Verlag}, address={Berlin},
    date = {2002},
    volume = {44}, ISBN={3-540-43566-2},
    review = {\MR{1926649 (2003m:58033)}}, }

\bib{ln16}{article}{
    author = {Lutz, Frank~H.},
    author = {Nevo, Eran},
    title = {Stellar theory for flag complexes},
    date = {2016},
    issn = {0025-5521},
    journal = {Math. Scand.},
    volume = {118},
    number = {1},
    pages = {70\ndash 82},
    url = {https://doi.org/10.7146/math.scand.a-23297},
    review = {\MR{3475102}}, }

\bib{o04}{article}{
    author = {Okun, Boris},
    title = {Weighted {$L^2$}-cohomology of {C}oxeter groups based on barycentric subdivisons},
    date = {2004},
    issn = {1465-3060},
    journal = {Geom. Topol.},
    volume = {8},
    pages = {1032\ndash 1042},
    url = {http://dx.doi.org/10.2140/gt.2004.8.1032},
    review = {\MR{2087077 (2005g:20062)}}, }

\bib{pnw08}{article}{
    author = {Postnikov, Alexander},
    author = {Reiner, Victor},
    author = {Williams, Lauren},
    title = {Faces of generalized permutohedra},
    date = {2008},
    journal = {Documenta Mathematics},
    volume = {13},
    pages = {207\ndash 273}, }

\bib{t84}{article}{
    author = {Tomei, Carlos},
    title = {The topology of isospectral manifolds of tridiagonal matrices},
    date = {1984},
    issn = {0012-7094},
    journal = {Duke Math. J.},
    volume = {51},
    number = {4},
    pages = {981\ndash 996},
    url = {http://dx.doi.org/10.1215/S0012-7094-84-05144-5},
    review = {\MR{771391 (86d:58091)}}, }

\bib{v10}{article}{
    author = {Volodin, Vadim},
    title = {Cubical realizations of flag nestohedra and proof of Gal's conjecture for them},
    date = {2010},

    journal = {Russ. Math. Surv.},
    volume = {65},
    number = {1},
    pages = {188\ndash 190}, }

\end{biblist}

\end{bibdiv}

\bigskip
\noindent Grigori Avramidi, Max Planck Institute for Mathematics, Bonn, Germany, 53111\\
\url{gavramidi@mpim-bonn.mpg.de} \\

\noindent Boris Okun, University of Wisconsin-Milwaukee, Department of Mathematical Sciences, PO Box 413, Milwaukee, WI 53201-0413\\
\url{okun@uwm.edu} \\

\noindent Kevin Schreve, Louisiana State University, Department of Mathematics, Baton Rouge, LA, 70806 \\
\url{kschreve@lsu.edu}

\end{document}